\documentclass[12pt, letterpaper,table]{amsart}
\usepackage{graphicx} 
\usepackage{url}
\usepackage{amsmath, amssymb, amsthm, mathrsfs}
\usepackage{tikz}
\usetikzlibrary{arrows.meta}  
\usepackage{cancel,multirow}

\usepackage{mathdots} 
\theoremstyle{theorem}
\newtheorem{theorem}{Theorem}
\newtheorem{lemma}{Lemma}
\newtheorem{corollary}{Corollary}
\newtheorem{conjecture}{Conjecture}
\newtheorem{quest}{Question}
\newtheorem{proposition}{Proposition}
\newtheorem{definition}{Definition}
\theoremstyle{remark}
\newtheorem{example}{Example}
\newtheorem{remark}{Remark}
\newtheorem*{remark*}{Remark}

\oddsidemargin -.05in
\evensidemargin -.05in
\textwidth 6.6in
\topmargin -.5in
\textheight 9in


\setlength{\arrayrulewidth}{0.5mm}
\setlength{\tabcolsep}{18pt}

\newcommand{\tilderot}{\widetilde{\mathsf{rot}}}
\newcommand{\rott}{\sigma}
\newcommand{\brow}{\mathsf{brow}}
\newcommand{\rtn}{\mathsf{rtn}}

\newcommand{\calD}{\mathcal{D}}

\newcommand{\calT}{\mathcal{T}}
\newcommand{\rottilde}{\widetilde{\mathsf{rot}}}

\usepackage{xcolor,color,colortbl}

\definecolor{light-blue}{HTML}{e9f1f8}
\definecolor{grayish}{RGB}{220,220,220}
\definecolor{lavender}{rgb}{0.5,0,1.0}

\newcommand{\qbinom}[2]{\left[\begin{array}{c}
     #1\\
     #2
\end{array}\right]_q}
\newcommand{\qbinomSpecial}[3]{\left[\begin{array}{c}
     #1\\
     #2
\end{array}\right]_{#3}}

\newcommand{\multichoose}[2]{\left(\!\binom{#1}{#2}\!\right)}

\newdimen\R
\R=0.8cm

\usepackage{hyperref}
\hypersetup{
    colorlinks=true,
    linkcolor=lavender,
    filecolor=blue,      
    urlcolor=teal,
    citecolor=teal
}
\usepackage[nameinlink]{cleveref}

\begin{document}
\title[The Cyclic Sieving Phenomenon and frieze patterns]{The Cyclic Sieving Phenomenon and frieze patterns}

\author{Ashleigh Adams}
\email{ashleigh.adams@ndsu.edu}
\address{School of Mathematics\\North Dakota State University\\Fargo, ND 58102}

\author{Esther Banaian}
\email{esther.banaian@ucr.edu}
\address{School of Mathematics\\University of California, Riverside\\Riverside, CA 92521}


\keywords{csp, polygons, frieze patterns, dissections, orbifolds}

\begin{abstract}
We exhibit two instances of the cyclic sieving phenomenon - one on dissections of a polygon of a fixed type and one on triangulations of a once-punctured polygon. We use these results to give refined enumerations of certain families of frieze patterns. We also give an interpretation of finite, positive integral frieze patterns fixed under nontrivial rotations as frieze patterns from a family of orbifolds and show that these are always unitary. Finally, we give a bijection between Holm–J{\o}rgensen frieze patterns and $p$-Dyck paths, extending a recent construction of Ca{\~n}adas, Gaviria, Rios, and Espinosa, and discuss an induced rotation map on Dyck paths. Several conjectures and questions for future study are highlighted throughout the article. 
\end{abstract}

\maketitle

\section{Introduction}
\label{sec: introduction}

Sets of noncrossing diagonals on polygons (i.e. \emph{dissections} or partial triangulations) are a fundamental object in many branches of combinatorics. 
Enumerating such sets with various constraints, such as the number of diagonals used or the types of faces cut out by the diagonals, often proves to be an interesting problem, as was pointed out by Cayley over 100 years ago \cite{cayley1890partitions}. 
For a small sample of the many such enumerative formulas, see \cite{beckwith1998legendre,kirkman1857xii,PS,stanley1996polygon}. Sets with the maximal possible number of noncrossing diagonals, i.e. triangulations, are of particular interest as they are enumerated by the ubiquitous Catalan numbers.

The slogan of combinatorics is often that it is the ``study of counting'', but in practice many interesting problems in combinatorics concern the symmetries underlying a set as well. This approach even has interdisciplinary applications, such as in characterizing polymers \cite{doi2011characterization}. In the case of dissections of a polygon, it is natural to group dissections according to their cyclic equivalence classes. The size of the equivalence class tells us about the symmetry present in the original dissection.
 Reiner, Stanton, and White defined the \emph{cyclic sieving phenomenon} in order to systemize such a problem.
 \begin{definition}[\cite{reiner2004cyclic}]\label{def:CSP}
 A triple $(X,G_n,X(q))$ consisting of a set $X$ endowed with the action of a cyclic group of order $n$,  $G_n = \langle \sigma \rangle$, and a polynomial $X(q) \in \mathbb{Z}[q]$ is said to \emph{exhibit the cyclic sieving phenomenon} if $X(\zeta_n^k)$ is equal to the number of fixed points of $\sigma^k$ where $\zeta_n$ is a primitive $n$-th root of unity.    
 \end{definition}
 In particular, in order for $(X,G_n,X(q))$ to exhibit CSP, the polynomial $X(q)$ must satisfy  $X(\zeta_n^n) = X(1) = \vert X \vert$. One striking beauty of the CSP is that often $X(q)$ is a natural $q$-analogue of an enumeration formula for $X$. For an excellent survey on the topic, see \cite{sagan2010cyclic}.

 In their seminal paper, Reiner, Stanton, and White show a cyclic sieving phenomenon for sets of dissections of an $n$-gon using $k$ diagonals. 
 Our first main result (\Cref{thm:CSPAmu}) is a cyclic sieving phenomenon on sets of dissections with a specified \emph{type}, i.e., a list of the number of $i$-gons formed by the dissection for each $3 \leq i \leq n$. 
 We suspect our polynomial is, up to powers of $q$, a refinement of the polynomial given in \cite[Theorem 7.1]{reiner2004cyclic}. 
 Dennis Stanton communicated a conjectural formula relating these polynomials; see Conjecture \ref{con:Stanton}.

 Our motivation for studying dissections of fixed type is partially due to the theory of frieze patterns. 
 Outside of mathematics, a ``frieze pattern'' is a pattern in architecture or art with horizontal symmetry. 
 It is classically known that there are seven types of symmetries a frieze can exhibit. 
 Coxeter introduced a mathematical variant while studying Gauss's Pentagramma mirificum \cite{Coxeter}. 
Together with Conway, they more fully established the theory of frieze patterns in two papers, one of which listed a set of questions \cite{conway1973triangulated}, while the second provided the solutions \cite{conway1973answers}. 
Interest in frieze patterns has surged in the past 20 years since they were shown to have a connection to cluster algebras and representation theory \cite{caldero2006cluster}; see for instance \cite{SLk,FontainePlamondon,germain2023friezes,Propp}.

The celebrated result in \cite{conway1973triangulated,conway1973answers} is that there is a bijection between finite frieze patterns consisting of positive integers and triangulations of polygons (see \Cref{thm:CCFrieze}). 
Holm and J{\o}rgensen generalized this result by constructing a family of frieze patterns which are in 1-1 correspondence with $p$-angulations (i.e. dissections which cut a polygon into $p$-gons). 
These correspondences are useful as they link frieze patterns to a treasure trove of other mathematics (which we also exploit in \Cref{sec: frieze patterns and dyck paths}). 
However, frieze patterns are defined to have infinite rows.  From an enumerative standpoint, it would be more natural to count frieze patterns as living on a cylinder. That is, it would be more natural to count friezes up to a global shift of their rows. Understanding the cyclic sieving phenomenon on triangulations and more generally $p$-angulations allows us to do just that. 
This is stated in \Cref{thm:num of equiv classes of friezes of type lambda} and uses an alternate definition of the cyclic sieving phenomenon, stated in \Cref{prop:CSPVersion2}.

A triangulation of a polygon can have one of three types of rotational symmetry. Using a combinatorial interpretation of entries in a frieze pattern by Broline, Crowe, and Isaacs \cite{BCI}, in \Cref{prop:SortFriezeByGrCo}, we characterize the associated frieze patterns in terms of \emph{growth coefficients}, as in \cite{baur2019growth}. We then show that frieze patterns associated to triangulations with nontrivial rotational symmetry are in correspondence to homomorphisms on certain \emph{finite-type (generalized) cluster algebras}; see \Cref{prop:OrbifoldFriezesUnitaryOrder2} and \Cref{prop:OrbifoldFriezesUnitaryOrder3}. We remark that the former result follows directly from \cite{FontainePlamondon}. As these cluster algebras have geometric models via arcs on orbifolds, we refer to these as \emph{frieze patterns from orbifolds}. \cite[Proposition 4]{BK-FPSAC} implies that the frieze patterns considered here constitute all the finite, positive integral frieze patterns from orbifolds . 

  An \emph{infinite frieze pattern} has one boundary row of 0's and infinitely many rows extending in one direction. These also enjoy connections to to cluster theory and representation theory \cite{baur2024infinite,baur2024infinite2,GMV,Pallister}. Baur, Parsons, and Tschabold gave a geometric model for infinite frieze patterns of positive integers \cite{baurInfiniteFriezes}. In particular, if the infinite frieze pattern has periodic rows, then it can be associated to a family of triangulated annuli or a family of triangulated once-punctured discs. The fact that this correspondence is not bijective makes enumeration less clear. By considering a cyclic-sieving phenomenon on once-punctured discs, using a $q$-analogue of a formula in \cite{FontainePlamondon}, we enumerate certain families of infinite frieze patterns of integers (\Cref{thm:CountInfiniteIntegralFrieze}). We also extend the enumeration in \cite{FontainePlamondon} to $(m+2)$-angulations of a once-punctured polygon (\Cref{thm:CountMAngulation}) and find a partial cyclic sieving phenomenon in this setting (\Cref{prop:CountInfiniteTypeLambdapFrieze}). 

  This narrative began with a Catalan object, namely, triangulations. \emph{Dyck paths}, i.e. lattice paths from $(0,0)$ to $(n,n)$ which lie above the line $y=x$, are another well-studied and much loved Catalan object. Recently Ca{\~n}adas, Gaviria, Rios, and Espinosa gave an explicit connection between Dyck paths and finite, positive integral frieze patterns \cite{canadas2023coxeter}. In \Cref{prop:FrizeFromDyck}, we extend this latter construction to one between Holm-J{\o}rgensen frieze patterns and $p$-Dyck paths, a common generalization. In line with the theme of the article, we also describe the induced map on Dyck paths and $p$-Dyck paths given by rotating the corresponding dissection; see \Cref{thm: rotation of n-gon and shifted Dyck path}. 

The remainder of this paper is split into four sections, which are organized as follows. 
We begin in \Cref{sec: dissections of polygons} by introducing notation that will be built upon in the later sections. This section also includes our cyclic sieving results on dissections of polygons. \Cref{sec: m-angulations of punctured polygons} tells a parallel story in the context of dissections of punctured polygons. Unlike in the previous section, here some of the enumeration results are new. The results in these previous two sections are applied to frieze patterns in \Cref{sec: frieze patterns}, which also contains all necessary background information. All discussion involving Dyck paths is contained in \Cref{sec: frieze patterns and dyck paths}. Throughout the article, we have scattered questions and conjectures which we believe would lead towards interesting future work.

\section*{Acknowledgements}
Ashleigh Adams was supported by NSF grant DMS-2247089 and NSF Graduate Fellowship Grant No. 2034612. We thank Vic Reiner and Dennis Stanton on enlightening conversations on the interactions between different cyclic sieving polynomials present in this topic. We would also like to thank Martin Rubey and Jessica Striker for helpful comments on earlier drafts. The second author thanks Emine Y{\i}ld{\i}r{\i}m for pointing out the reference \cite{Palu}. 

\section{Dissections of Polygons}
\label{sec: dissections of polygons}
In this section, we will exhibit a cyclic-sieving phenomenon on families of dissections \newline (\Cref{thm:CSPAmu}). We will also introduce notation that will be used in future sections.

Let $P_n$ denote a polygon with $n$ vertices. We will label these vertices $v_0,v_1,\ldots,v_{n-1}$, traveling in clockwise order, and we will always treat these indices modulo $n$.
A (counterclockwise) \emph{rotation} of an $n$-gon, $P_n,$ is a map on the vertices of $P_n$ sending $v_i\mapsto v_{i-1}$ for all $0\leq i\leq n-1.$ 

A \emph{dissection} $T$ of $P_n$ is a set of non-crossing diagonals on $P_n$.  
We will refer to connected components of $P_n \backslash T$ as \emph{subgons}, and we will refer to the number of vertices of a polygon as its \emph{size}. 
Sometimes,  we will conflate a dissection with this set of subgons.  
We say the \emph{type} of a dissection $T$ is a vector $(\mu_1(T),\ldots,\mu_{n-2}(T))$ such that the number of $(i+2)$-gons in $P_n \backslash T$ is $\mu_i(T)$. 
When the type $\mu(T)$ of $T$ satisfies $\mu_i(T) = 0$ for all $i \neq m$, we call $T$ a \emph{$(m+2)$-angulation}. 

We first discuss an enumeration of all dissections of the same type. 
Let $\mu = (\mu_1,\mu_2,\ldots,\mu_n) \in \mathbb{Z}^n_{\geq 0}$. Throughout this section, we will set $k = \mu_1 + \mu_2 + \cdots + \mu_n$ and $n = \mu_1 + 2 \mu_2 + \cdots + n \mu_n$. 
By construction, it is possible to dissect $P_{n+2}$ into $\mu_i$ $(i+2)$-gons with $k-1$ non-crossing diagonals. 
Call the set of all such dissections $\mathcal{A}_\mu$ and let $a_\mu := \vert \mathcal{A}_\mu \vert$. A formula for $a_\mu$ was given as part of an exercise in \cite{goulden2004combinatorial}:
\[
a_\mu = \frac{1}{n+1} \binom{n+k}{k} \binom{k}{\mu_1,\mu_2,\ldots,\mu_n}.
\]

The numbers $a_\mu$ were also discussed in \cite{DevadossCellular,Wildberger,schuetz2016polygonal}.
Note that if $\mu$ has only one nonzero part, say $\mu_m$, then necessarily $\frac{n}{m}$ is a positive integer. Set $\ell = \frac{n}{m}$. After some manipulations, $a_\mu$ for such a $\mu$ can be seen to be a Fuss-Catalan number:\[
c_\ell^{(m)}:= a_{(0,\ldots,0,\mu_m,0,\ldots,0)} = \frac{1}{m\ell+1} \binom{(m+1)\ell}{\ell}.
\] In particular, if only the first part of $\mu$ is nonzero, then we recover the Catalan number $c_n := c_n^{(1)}$ which enumerates the triangulations of $P_{n+2}$.

We set $a_\mu(q)$ to be the naive $q$-analogue of $a_\mu$, namely, 
\[
a_\mu(q) = \frac{1}{[n+1]_q} \qbinom{n+k}{k}\qbinom{k}{\mu_1,\mu_2,\ldots,\mu_n}.
\]
We define $c_\ell^{(m)}(q)$ and $c_n(q)$ accordingly.
We will show that $a_\mu(q)$ functions as the cyclic sieving polynomial for $\mathcal{A}_\mu$. First, we compute the evaluation of $a_\mu(q)$ at appropriate roots of unity. Recall that we set $\zeta_n$ to be a primitive $n$-th root of unity.

\begin{lemma}\label{lem:Evaluate_a_mu}
Let $\mu = (\mu_1,\mu_2,\ldots,\mu_n), k = \sum_{i=1}^n \mu_i$, and $n = \sum_{i=1}^n i\mu_i$.  Let $d$ be a positive integer such that $d \vert (n+2)$ 
\begin{enumerate}
    \item If there is one index $j$ such that $\mu_j \equiv 1 \pmod{d}$ and $\mu_i \equiv 0 \pmod{d}$ for all $i \neq j$, then  \[
a_\mu(\zeta_d) = \binom{\frac{n+2}{d} + \frac{k-1}{d} - 1}{\frac{k-1}{d}} \binom{\frac{k-1}{d}}{\lfloor \frac{\mu_1}{d}\rfloor, \lfloor \frac{\mu_2}{d}\rfloor, \ldots, \lfloor \frac{\mu_n}{d}\rfloor} .
\]
\item If $d = 2$ and all $\mu_i$ are even, then 
\[
a_\mu(\zeta_{2}) = a_\mu(-1) = \binom{\frac{n+k}{2}}{\frac{k}{2}}\binom{\frac{k}{2}}{\frac{\mu_1}{2},\frac{\mu_2}{2},\ldots,\frac{\mu_n}{2}}.
\]
\item If $\mu$ does not satisfy the above conditions, then $a_\mu(\zeta_d) = 0$.
\end{enumerate}
\end{lemma}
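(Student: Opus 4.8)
The plan is to evaluate $a_\mu(\zeta_d)$ directly from its definition using the $q$-analogue of Lucas's theorem: for every positive integer $d$ and every primitive $d$-th root of unity $\zeta_d$,
\[
\qbinom{a}{b}\Big|_{q=\zeta_d}=\binom{\lfloor a/d\rfloor}{\lfloor b/d\rfloor}\,\qbinomSpecial{a\bmod d}{b\bmod d}{\zeta_d},
\]
together with its iteration to $q$-multinomials via the chain identity $\qbinom{k}{\mu_1,\ldots,\mu_n}=\prod_{i=1}^{n-1}\qbinom{\mu_i+\mu_{i+1}+\cdots+\mu_n}{\mu_i}$. The case $d=1$ is immediate, since $\zeta_1=1$ and $a_\mu(1)=a_\mu$; so assume $d\geq 2$. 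Because $d\mid(n+2)$ we have $n\equiv-2\pmod d$ and $n+1\equiv-1\pmod d$, so $[n+1]_{\zeta_d}\neq 0$ and $a_\mu(\zeta_d)$ is genuinely the product of the values at $q=\zeta_d$ of the three factors $1/[n+1]_q$, $\qbinom{n+k}{k}$, and $\qbinom{k}{\mu_1,\ldots,\mu_n}$.

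First I would show that $\qbinom{k}{\mu_1,\ldots,\mu_n}$ vanishes at $\zeta_d$ whenever there is a carry in the base-$d$ addition $\mu_1+\cdots+\mu_n=k$: applying $q$-Lucas to each factor of the chain identity, a carry makes the corresponding residual $q$-binomial $\qbinomSpecial{s\bmod d}{\mu_i\bmod d}{\zeta_d}$ (with $s=\mu_i+\mu_{i+1}+\cdots+\mu_n$) the zero polynomial. So assume there is no carry, in which case the same computation yields
\[
\qbinom{k}{\mu_1,\ldots,\mu_n}\Big|_{q=\zeta_d}=\binom{\lfloor k/d\rfloor}{\lfloor\mu_1/d\rfloor,\ldots,\lfloor\mu_n/d\rfloor}\cdot\qbinomSpecial{R}{\mu_1\bmod d,\ldots,\mu_n\bmod d}{\zeta_d},
\]
where $R:=k\bmod d$ equals $\sum_i(\mu_i\bmod d)$, while plain $q$-Lucas gives $\qbinom{n+k}{k}\big|_{q=\zeta_d}=\binom{\lfloor(n+k)/d\rfloor}{\lfloor k/d\rfloor}\,\qbinomSpecial{(n+k)\bmod d}{R}{\zeta_d}$. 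Since $n+k\equiv R-2\pmod d$, the residual $q$-binomial $\qbinomSpecial{(R-2)\bmod d}{R}{\zeta_d}$ is the zero polynomial whenever $2\leq R\leq d-1$. Hence a nonzero value of $a_\mu(\zeta_d)$ forces $R\in\{0,1\}$: if $R=1$ then $\sum_i(\mu_i\bmod d)=1$, so exactly one $\mu_j\equiv 1$ and $\mu_i\equiv 0\pmod d$ otherwise, which is hypothesis (1); if $R=0$ then every $\mu_i\equiv 0\pmod d$, hence $n\equiv 0\pmod d$, and together with $n\equiv-2\pmod d$ this forces $d\mid 2$, i.e. $d=2$, which is hypothesis (2). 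This establishes part (3).

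It remains to read off the value in cases (1) and (2). In case (1) the residual $q$-multinomial is $\qbinomSpecial{1}{0,\ldots,1,\ldots,0}{\zeta_d}=1$ (the $1$ in position $j$), and the residual $q$-binomial from $\qbinom{n+k}{k}$ is $\qbinomSpecial{d-1}{1}{\zeta_d}=[d-1]_{\zeta_d}$; the decisive cancellation is $[d-1]_{\zeta_d}=\dfrac{\zeta_d^{-1}-1}{\zeta_d-1}=[n+1]_{\zeta_d}$ (both using $n+1\equiv d-1\pmod d$), which cancels the leading factor $1/[n+1]_{\zeta_d}$. What survives is $\binom{\lfloor(n+k)/d\rfloor}{\lfloor k/d\rfloor}\binom{\lfloor k/d\rfloor}{\lfloor\mu_1/d\rfloor,\ldots,\lfloor\mu_n/d\rfloor}$, and the residues $k\equiv 1$ and $n+k\equiv d-1\pmod d$ give $\lfloor k/d\rfloor=\frac{k-1}{d}$ and $\lfloor(n+k)/d\rfloor=\frac{n+k+1-d}{d}=\frac{n+2}{d}+\frac{k-1}{d}-1$, which is precisely the claimed formula. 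In case (2), $q=-1$: since $n+1$ is odd, $[n+1]_{-1}=1$, and every residual $q$-binomial is $\qbinomSpecial{0}{0}{-1}=1$, so $a_\mu(-1)=\binom{(n+k)/2}{k/2}\binom{k/2}{\mu_1/2,\ldots,\mu_n/2}$.

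The step I expect to be the main obstacle is the $q$-multinomial bookkeeping: formulating the no-carry criterion precisely and checking that, when no carry occurs, the residual $q$-binomials produced by the chain identity telescope back to the single residual $q$-multinomial displayed above (and that the integer parts telescope to $\binom{\lfloor k/d\rfloor}{\lfloor\mu_1/d\rfloor,\ldots,\lfloor\mu_n/d\rfloor}$), rather than to something unwieldy. Verifying the floor identities and confirming that the ordinary binomial and multinomial coefficients appearing have nonnegative entries summing correctly is a further, routine, nuisance. Once $q$-Lucas is in hand, though, everything else is forced.
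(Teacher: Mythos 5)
Your proof is correct, and it takes a recognizably different route from the paper's. The paper evaluates $a_\mu(\zeta_d)$ directly from the ratio identity $\lim_{q\to\zeta_d}[g]_q/[h]_q$ (its Equation (1)), pairing off the factors of numerator and denominator divisible by $d$, and it organizes the vanishing argument for part (3) by the cases $k\equiv 0$, $k\equiv 1$, and $k\not\equiv 0,1 \pmod d$; you instead package the same root-of-unity analysis through the $q$-Lucas theorem applied to the chain factorization of the $q$-multinomial, so that vanishing is controlled by a carry criterion together with the residual $q$-binomial with top $(n+k)\bmod d = (R-2)\bmod d$ and bottom $R=k\bmod d$. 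Your version buys two things the paper leaves implicit: the observation that $[n+1]_{\zeta_d}\neq 0$ (so the rational expression genuinely evaluates at $\zeta_d$ with no limit needed, and a zero factor cannot be cancelled), and the isolated cancellation $[d-1]_{\zeta_d}=[n+1]_{\zeta_d}$ that explains why cases (1) and (2) come out as stated, which the paper compresses into ``these numbers are the same and the resulting expressions are clear.'' Conversely the paper's computation is more self-contained, needing only the displayed ratio identity rather than $q$-Lucas and the telescoping of integer parts and residues in the no-carry case. Your bookkeeping checks out, including $\lfloor (n+k)/d\rfloor=\frac{n+2}{d}+\frac{k-1}{d}-1$ when $k\equiv 1\pmod d$ and the fact that the situation $R=0$ with $d>2$ is vacuous because $d\mid(n+2)$ forces $d\mid 2$, which is exactly the paper's argument in its case (3). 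One cosmetic point: rather than calling $d=1$ ``immediate,'' it is cleaner to note that the lemma is only intended for $d\geq 2$, since the displayed formula in part (1) is not the statement being made at $d=1$; this implicit restriction is shared with the paper.
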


\begin{proof}
A standard way to compute such evaluations is to use the following identity. If $g \equiv h \pmod{d}$, then

 \begin{equation}
\lim_{q \to \zeta_d} \frac{[g]_q}{[h]_q} = \begin{cases} \frac{g}{h} & g \equiv 0 \pmod{d} \\ 1 & \text{ otherwise}.
\end{cases}.\label{eq:Ratioq}
\end{equation}

Therefore, most of the work comes from counting the number of terms in the numerator and denominator which are equivalent to 0 modulo $d$.  In the set up of Cases (1) and (2), these numbers are the same and the resulting expressions are clear.

We now move on to showing Case (3), separating the proof into three cases, with this same method in mind.
If $k$ is not equivalent to 0 or 1 modulo $d$, then we immediately see
$
\mathop{\Bigl[ \begin{smallmatrix}
\scriptstyle n+k \\
\scriptstyle k
\end{smallmatrix} \Bigr]}_{\zeta_d}
= 0
$.

If $k \equiv 0$ modulo $d$, then the only way for 
$
\mathop{\Bigl[ \begin{smallmatrix}
\scriptstyle k \\
\scriptstyle \mu_1, \mu_2, \ldots, \mu_n
\end{smallmatrix} \Bigr]}_{\zeta_d}
$ 
to be nonzero is if $\mu_i \equiv 0 \pmod{d}$ for all $i$. If $d = 2$, then we are in Case (2). If $d > 2$, then it is impossible to have  $2 + \sum_{i=1}^n i\mu_i = 2+n$ be 0 modulo $d$.

Similarly, if $k \equiv 1$ modulo $d$, and $\mu$ does not satisfy the condition in the hypothesis of part 1, then 
$
\mathop{\Bigl[ \begin{smallmatrix}
\scriptstyle k \\
\scriptstyle \mu_1, \mu_2, \ldots, \mu_n
\end{smallmatrix} \Bigr]}_{\zeta_d} = 0
$.
\end{proof}

We now show that, if $d \vert (n+2)$ and $\mu$ satisfies the conditions in Case 1 or 2 of \Cref{lem:Evaluate_a_mu}, then the number of fixed points of $\mathcal{A}_\mu$ under $\frac{n}{d}$-fold rotation is equal to the evaluation $a_\mu(\zeta_d)$. We refer to such a dissection as one having \emph{$d$-fold symmetry}. In particular, 2-fold symmetry is a synonym for central symmetry. 
When we are in Case 1, we show this by explaining a way to encode each fixed point, which will make the enumeration apparent.  
Our proof is inspired by that of of \cite[Proposition~3.6]{eu2008cyclic}. By keeping track of specific sizes of subgons, our process can be made more explicit.

\begin{proposition}
\label{prop:CountingFixedFreeOrbit}
Let $\mu = (\mu_1,\mu_2,\ldots,\mu_n)$, $k = \sum_{i=1}^n \mu_i$, and $n = \sum_{i=1}^n i \mu_i$.  Let $d$ be a positive integer such that $d \vert (n+2)$. 
\begin{enumerate}
    \item  If there is exactly one value $\mu_j$ such that $\mu_j \equiv 1 \pmod{d}$ and $\mu_i \equiv 0 \pmod{d}$ for all $i \neq j$, then the number of dissections in $\mathcal{A}_\mu$ with $d$-fold symmetry  is\[
\binom{\frac{n+2}{d} + \frac{k-1}{d} - 1}{\frac{k-1}{d}} \binom{\frac{k-1}{d}}{\lfloor \frac{\mu_1}{d}\rfloor, \lfloor \frac{\mu_2}{d}\rfloor, \ldots, \lfloor \frac{\mu_n}{d}\rfloor} .
\]
\item Let $d = 2$. If all $\mu_i$ are even, then the number of dissections in $\mathcal{A}_\mu$ with $d$-fold symmetry  is \[
\binom{\frac{n+k}{2}}{\frac{k}{2}}\binom{\frac{k}{2}}{\frac{\mu_1}{2},\frac{\mu_2}{2},\ldots,\frac{\mu_n}{2}}.
\] 
\item If $d$ and $\mu$ do not satisfy either of the above conditions, then the number of dissections in $\mathcal{A}_\mu$ fixed by $d$-fold rotation is 0.
\end{enumerate}
\end{proposition}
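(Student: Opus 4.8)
The plan is to prove each of the three cases by a combinatorial construction that reduces a $d$-fold symmetric dissection of $P_{n+2}$ to a "fundamental domain" dissection of a smaller polygon together with some bookkeeping data, and then to count those fundamental domains directly so that the count matches the evaluations computed in \Cref{lem:Evaluate_a_mu}. Since the axis of a rotation of order $d$ on $P_{n+2}$ either fixes no vertex (when $d \nmid n+2$ in a trivial way, but here $d \mid n+2$, so the rotation by $\frac{n+2}{d}$ has a well-defined fixed structure) — more precisely, the relevant case split is whether the center of rotation lies on a vertex/edge or strictly inside a subgon — I would first observe that a dissection with $d$-fold symmetry must have a unique central subgon stabilized (setwise) by the rotation, and the size of that central subgon is forced: its number of vertices is a multiple of $d$, and its "excess" contributes the index $j$ with $\mu_j \equiv 1 \pmod d$ in Case (1). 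The complement of the central subgon breaks into $d$ congruent "wedges," each of which is an arbitrary dissection of a polygon whose side lengths are recorded by $\lfloor \mu_i/d \rfloor$; counting wedge-dissections via the formula $a_\mu$ (applied to the quotient data) yields the first binomial-times-multinomial factor, and choosing how to glue/rotate gives agreement with the claimed product. This mirrors \cite[Proposition~3.6]{eu2008cyclic}, but tracking subgon sizes makes the multinomial refinement explicit.

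For Case (2), the $d=2$ centrally symmetric situation, the center can alternatively lie on a diagonal: a centrally symmetric dissection of $P_{n+2}$ either has a central $2j$-gon (handled as above, but now with all $\mu_i$ even there is no leftover index, so this sub-subcase contributes nothing new) or has a central diagonal through the two antipodal points, splitting $P_{n+2}$ into two congruent halves. I would enumerate the half-dissections: each half is a dissection of an $(\frac{n}{2}+2)$-gon with type $(\frac{\mu_1}{2}, \ldots, \frac{\mu_n}{2})$, and applying the formula $a_\mu$ to that smaller type, together with accounting for the central diagonal, should produce exactly $\binom{(n+k)/2}{k/2}\binom{k/2}{\mu_1/2,\ldots,\mu_n/2}$. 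The two geometric possibilities (central diagonal vs. central even-gon) must be shown to be mutually exclusive and jointly to give the stated total; I expect a short parity/counting argument comparing the two contributions to the single closed form.

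Case (3) is the easy direction: if $d$ and $\mu$ satisfy neither hypothesis, I would argue directly that no $d$-fold symmetric dissection exists, by the same structural dichotomy — any fixed dissection forces either a central subgon whose presence pins down the residues of the $\mu_i$ modulo $d$ (giving the Case (1) condition, possibly with the leftover index), or, only when $d=2$, a central diagonal forcing all $\mu_i$ even (Case (2)). Since $\sum i\mu_i = n$ and $d \mid n+2$, the arithmetic constraint $2 + \sum i\mu_i \equiv 0 \pmod d$ rules out the all-$\mu_i$-divisible-by-$d$ scenario when $d > 2$, exactly as in the proof of \Cref{lem:Evaluate_a_mu}(3).

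The main obstacle I anticipate is making the "wedge" decomposition in Case (1) fully rigorous: one must carefully define the central subgon (showing it is unique and rotation-stable), verify that the $d$ wedges are genuinely free under the group action so that a dissection is recovered from one wedge plus the choice of central-subgon placement without overcounting, and confirm that the side-length data of a wedge is precisely $\lfloor \mu_i/d \rfloor$ with the single exceptional index absorbed into the central subgon. Getting the edge/diagonal bookkeeping at the boundary between the central subgon and the wedges exactly right — so that the product of the wedge count $a_{\lfloor\mu/d\rfloor}$-type formula and the placement count collapses to the asserted binomial-times-multinomial — is where the care is needed; everything else is an application of the formula for $a_\mu$ and elementary root-of-unity arithmetic already done in \Cref{lem:Evaluate_a_mu}.
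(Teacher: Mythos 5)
Your Cases (2) and (3) are in outline the same as the paper's: for $d=2$ with all $\mu_i$ even, the odd number of diagonals forces a unique fixed diameter, each symmetric dissection is that diameter plus a dissection of type $\mu/2$ of one half, and multiplying the number of possible diameters by $a_{\mu/2}$ simplifies to the stated product; your nonexistence dichotomy in (3) (central cell pinning down residues, or a central diameter when $d=2$) is also the paper's argument. The genuine gap is in Case (1). There you reduce a $d$-symmetric dissection to ``central $(j+2)$-gon plus $d$ congruent wedges'' and then assert that the wedge count is ``the formula $a_\mu$ applied to the quotient data,'' matching the first factor of the target. That identification is not correct as stated and is nowhere established: $a_{\lfloor \mu/d\rfloor}$ counts dissections of a polygon with $\frac{n-j}{d}+2$ vertices, whereas the first factor of the claimed formula is the multichoose coefficient $\multichoose{\frac{n+2}{d}}{\frac{k-1}{d}}$, which is not a Fuss--Catalan-type quantity. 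Under your decomposition, the number of symmetric dissections is a sum, over placements of the central $(j+2)$-gon and over how the $\frac{n+2}{d}$ boundary edges of a fundamental sector are distributed among its $\frac{j+2}{d}$ central edges, of products of smaller $a_\nu$'s; to finish along your lines you would have to prove that this sum collapses to $\binom{\frac{n+2}{d}+\frac{k-1}{d}-1}{\frac{k-1}{d}}\binom{\frac{k-1}{d}}{\lfloor \mu_1/d\rfloor,\ldots,\lfloor\mu_n/d\rfloor}$ --- a nontrivial identity that you yourself flag as ``the main obstacle'' and do not address. As written, Case (1) is a program rather than a proof.

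For contrast, the paper sidesteps the central-cell/wedge decomposition entirely. It orients every diagonal so the center of $P_{n+2}$ lies to its right, observes that the $\frac{k-1}{d}$ rotation orbits of diagonals each have a unique representative whose starting vertex lies in a fundamental sector of $\frac{n+2}{d}$ vertices, and records for each orbit the pair (index of that starting vertex, size of the subgon to its left). This immediately encodes a symmetric dissection as a weakly increasing list of $\frac{k-1}{d}$ starting indices together with a multiset of subgon sizes with multiplicities $\lfloor\mu_i/d\rfloor$, which is exactly what the multichoose-times-multinomial counts; the only substantive work is the inverse map, which rests on showing some pair satisfies $a_i+e_i+1\le a_{i+1}$ via the computation $\sum_i e_i=\frac{n+2}{d}-\frac{j}{d}<\frac{n+2}{d}$, so the dissection can be rebuilt by repeatedly gluing on an $(e_i+2)$-gon. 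Your plan contains no analogue of this bijective encoding or of the removability argument, which is precisely the missing content.
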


\begin{proof}
\textbf{Item (1)} Notice our binomial  $\binom{\frac{n+2}{d} + \frac{k-1}{d} - 1}{\frac{k-1}{d}}$ is equal to the multichoose coefficient 
$\multichoose{\frac{n+2}{d}}{\frac{k-1}{d}}$. For shorthand, set $g = \frac{k-1}{d}$. The desired formula suggests that we should find a bijection between dissections in $\mathcal{A}_\mu$ with $d$-fold symmetry (call this set $\mathcal{A}^d_\mu$) and length $g$ lists of tuples $(a_1,e_1),(a_2,e_2),\ldots,(a_g, e_g)$ with the $a_i$ satisfying $0 \leq a_1 \leq a_2 \leq \cdots \leq a_g \leq \frac{n+2}{d}-1$ and where the $e_i$ form a multiset of elements from $[n]:= \{1,\ldots,n\}$ with number $i$ appearing with multiplicity $\lfloor \frac{\mu_i}{d}\rfloor$.

Given $S \in \mathcal{A}_\mu^d$, we see that $S$ is determined by its appearance in a $\frac{2\pi n}{d}$ sector, say, on the vertices $v_0,v_1,\ldots,v_{\frac{n+2}{d}-1}$. For the following, we will  consider each diagonal oriented so that the center of $P_{n+2}$ lies to its right. This gives us a well-defined notion of the ``starting point'' of a diagonal. 

There are $g = \frac{k-1}{d}$ orbits of diagonals under this rotation. Each orbit contains a unique representative whose starting point is in $V^d:= \{v_0,v_1,\ldots,v_{\frac{n+2}{d}-1}\}$. Let $a_1 \leq a_2 \leq \cdots \leq a_g$ be the indices of the starting points, listed in weakly increasing order. 
We will think of each value $a_i$ associated with the corresponding diagonal with starting point at $v_{a_i}$. 
When there are multiple diagonals with the same starting point, we will label these in counterclockwise order, so that the one with larger index is closer to the boundary edge $(v_{a_i},v_{a_i+1})$. 
Now, let $e_i + 2$ be the size of the subgon lying to the left of the $i$-th diagonal. 
Here, by ``to the left'', we are again using the reference of  the orientation given to this diagonal. Since either $d > 2$ or $d = 2$ and there is an even number of diagonals, there will be one subgon, whose size is a multiple of $d$, which will never be detected by this process since it lies to the right of all arcs. 
It follows that the list $(a_1,e_1),\ldots,(a_g,e_g)$ will satisfy the desired conditions.

We claim this map is bijective. We describe an inverse map recursively. That is, we describe how to build an element of $\mathcal{A}_\mu^d$ from such a list. 
The smallest case is $g = \frac{k-1}{d} = 0$, which is associated to the trivial dissection on an $(n+2)$-gon. 

Now, we look more generally at a list $(a_1,e_1),\ldots,(a_g,e_g)$ where in the multiset $\{e_1,\ldots,e_g\}$ the number $i+2$ appears with multiplicity $\lfloor \frac{\mu_i}{d}\rfloor$.  The key step is showing that there exists at least one value $i$ such that $a_i + e_i + 1 \leq a_{i+1}$, where we interpret $a_{g + 1} = a_1 + \frac{n+2}{d}$. 
Now, suppose for sake of contradiction that, for all $1 \leq i \leq g$, $a_i + e_i + 1 > a_{i+1}$, or equivalently $a_i + e_i \geq a_{i+1}$. 
Since $a_{g + 1}$ is defined to be $a_1 + \frac{n+2}{d}$, we also have an inequality $a_g + e_g \geq a_1 + \frac{n+2}{d}$.  If we sum all $g$ such inequalities and reduce,  we have  \[
 e_1 + \cdots + e_{g-1} + e_g \geq \frac{n+2}{d}  .
\]

Recall $j$ is the index such that $\mu_{j} \equiv 1 \pmod {d}$.  Now, we view the sum of the $e_i$ in another way,  using our initial assumptions about the vector $\mu$, 
\begin{align*}
\sum_{i=1}^g e_i &= \sum_{i=1}^n \bigg\lfloor \frac{\mu_i}{d} \bigg\rfloor i\\
&= \sum_{i \neq j} \frac{\mu_i}{d} i + \frac{j(\mu_j - 1)}{d}\\
&= \frac{1}{d} \sum_{i=1}^n \mu_i i - \frac{j}{d}\\
&= \frac{n+2}{d} - \frac{j}{d} < \frac{n+2}{d} .
\end{align*}

Therefore,  assuming $a_i + e_i + 1 > a_{i+1}$ for all $i$ led to a contradiction.  
Given a list $(a_1,e_1),\ldots,(a_g,e_g)$,  pick the smallest $i$ such that $a_i + e_i < a_{i+1}$. 
If we remove the pair $(a_i,e_i)$, we get a smaller list which has an associated dissection $S'$ in $\mathcal{A}_{\mu'}^d$ where $\mu'$ is the result of subtracting $1$ from $\mu_{e_i}$. 
Then, the dissection for the original list is the result of adding an $(e_i+2)$-gon onto $S'$ at each boundary edge in the orbit of $(v_{a_{i}},v_{a_{i}+1})$. 

\textbf{Item (2)} If the number of subgons in a dissection is even, then the number of arcs used in the dissection is odd. In order for such a dissection to be fixed by 2-fold rotation, one arc in the dissection must be fixed by the rotation. This arc must be of the form $(i, i+\frac{n}{2})$. Such an arc cuts our polygon into two $(\frac{n}{2}+1)$-gons. We see that the dissection is fixed if these two smaller subgons have the same dissection, so that the number of fixed points is $\frac{n}{2} a_{\frac{\mu}{2}}$ where $\frac{\mu}{2}:=(\frac{\mu_1}{2},\frac{\mu_2}{2},\ldots,\frac{\mu_n}{2})$. The claim follows by simplifying the expression $\frac{n}{2} a_{\frac{\mu}{2}}$.

\textbf{Item (3)} If $d > 2$, then a necessary condition for a dissection to have $d$-fold symmetry is for there to be a central $(\ell d)$-gon, implying that the number of $(\ell d)$-gons in the entire dissection is 1 modulo $d$ and the number of all other sized subgons is 0 modulo $d$. If $d = 2$, then there must either be a central $(2\ell)$-gon for $\ell > 1$ or a central diagonal. In all such cases, we see that one of the conditions of $\mu$ from (1) or (2) must be satisfied. 
\end{proof}

Combining \Cref{lem:Evaluate_a_mu} and \Cref{prop:CountingFixedFreeOrbit} gives the following immediately.

\begin{theorem}
\label{thm:CSPAmu}
The triple $(\mathcal{A}_\mu, G_{n+2}, a_\mu(q))$ exhibits the cyclic sieving phenomenon. 
\end{theorem}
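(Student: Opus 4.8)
The plan is to check the defining condition of \Cref{def:CSP} directly. Write $N = n+2$ for the number of vertices of the dissected polygon, so that $G_{N} = \langle \sigma \rangle$ with $\sigma$ the counterclockwise rotation of $P_N$. First I would record that $a_\mu(q)$ is a polynomial with integer coefficients (this is standard and can be verified from the product formula), so the triple has the shape required by \Cref{def:CSP}. It then remains to show that for every integer $k$,
\[
a_\mu(\zeta_{N}^{\,k}) \;=\; \#\{\, T \in \mathcal{A}_\mu \,:\, \sigma^{k}(T) = T \,\}.
\]

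The point is that both sides depend on $k$ only through the divisor $d := N/\gcd(k,N)$ of $N = n+2$. On the left, $\zeta_{N}^{\,k}$ is a primitive $d$-th root of unity, and the proof of \Cref{lem:Evaluate_a_mu} in fact evaluates $a_\mu$ at an \emph{arbitrary} primitive $d$-th root of unity: the only tool used is the limit identity \eqref{eq:Ratioq}, which is insensitive to the choice of root, so $a_\mu(\zeta_{N}^{\,k}) = a_\mu(\zeta_d)$. On the right, $\langle \sigma^{k} \rangle = \langle \sigma^{\gcd(k,N)} \rangle = \langle \sigma^{N/d} \rangle$ is the unique subgroup of $G_N$ of order $d$, so $T$ is fixed by $\sigma^{k}$ precisely when it is fixed by $\sigma^{N/d}$, i.e.\ precisely when $T$ has $d$-fold symmetry. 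Hence the right-hand side equals the number of dissections in $\mathcal{A}_\mu$ with $d$-fold symmetry.

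The remaining step is simply to align the two computations, and here the work is already done: \Cref{lem:Evaluate_a_mu} and \Cref{prop:CountingFixedFreeOrbit} are organized around exactly the same trichotomy on $(\mu,d)$ --- (1) a single $\mu_j \equiv 1 \pmod d$ with all other $\mu_i \equiv 0$; (2) $d=2$ with every $\mu_i$ even; (3) otherwise --- and they produce exactly the same value in each case (the double binomial with the $\lfloor \mu_i/d \rfloor$'s in case (1), the double binomial with the $\mu_i/2$'s in case (2), and $0$ in case (3)). Therefore $a_\mu(\zeta_d)$ equals the number of $d$-fold symmetric dissections in $\mathcal{A}_\mu$, which is what we wanted; the degenerate case $d = 1$ (i.e.\ $N \mid k$) is just $a_\mu(1) = a_\mu = \vert \mathcal{A}_\mu \vert$, and taking $k$ over $0,1,\ldots,N-1$ covers all powers of $\sigma$.

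With the two ingredients in hand there is no real obstacle left: the genuine difficulty --- the explicit encoding of a $d$-fold symmetric dissection by a weakly increasing list of (starting index, subgon-size) pairs --- was already carried out in the proof of \Cref{prop:CountingFixedFreeOrbit}. In assembling the theorem the only points that demand care are the bookkeeping passage from a general power $\sigma^{k}$ to rotation by $N/d$ positions, the observation that the root-of-unity evaluation of $a_\mu(q)$ depends only on the order of the root (so that the three cases of \Cref{lem:Evaluate_a_mu} really do exhaust the values $\zeta_N^k$), and a final sanity check that $a_\mu(q)$ is genuinely a polynomial so that the statement of CSP is well posed.
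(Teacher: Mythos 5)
Your proposal is correct and follows the paper's own route exactly: the paper proves \Cref{thm:CSPAmu} by simply combining \Cref{lem:Evaluate_a_mu} with \Cref{prop:CountingFixedFreeOrbit}, and your additional bookkeeping (reducing $\sigma^k$ to $d$-fold symmetry with $d = (n+2)/\gcd(k,n+2)$, and noting the root-of-unity evaluation depends only on the order of the root) is the routine glue the paper leaves implicit.
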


\subsection{Relationship to Previous Work}

In \cite[Theorem 7.1]{reiner2004cyclic}, Reiner, Stanton, and White discussed cyclic sieving phenomenon on the set of all dissections of $P_n$ which use $k$ diagonals. The number of such dissections is $f(n,k) = \frac{1}{n+k} \binom{n+k}{k+1}\binom{n-3}{k}$, and the polynomial used their result is the naive $q$-analogue of this formula, i.e., \[
f(n,k;q):=\frac{1}{[n+k]_q}\qbinom{n+k}{k+1}\qbinom{n-3}{k}.\]

The sets $\mathcal{A}_\mu$ partition the cyclic orbits of dissections of $P_{n+2}$ using $k-1$ diagonals since the size of subgons in a dissection do not change under rotation. So it is natural to expect a relation between $f(n+2,k-1;q)$ and the family of polynomials $a_\mu(q)$ with $k = \sum_{i=1}^n \mu_i$ and $n = \sum_{i=1}^n i\mu_i$. Such a relationship was conjectured to us by Dennis Stanton \cite{DennisEmail}.

Notice each $\mu$ encodes a partition $\lambda(\mu) = (\lambda_1,\ldots,\lambda_k)$ of $n$ into $k$ parts, where $\mu_i$ is the number of parts of $\lambda$ of size $i$. Write $\lambda(\mu) = (\lambda_1 > \lambda_2 > \cdots > \lambda_k)$ in decreasing order and then set \[
b(\mu) := 2k\lambda_1 + 2(k-1) \lambda_2 + \cdots + 2 \lambda_{k-1} + \lambda_k.\]

\begin{conjecture}[Stanton's Conjecture]\label{con:Stanton}
The polynomials $f(n,k;q)$ and $a_\mu(q)$ are related by \[
q^{2n + k(k-1)}f(n+2,k-1;q) = \sum_\mu q^{b(\mu)} a_\mu(q)  
\]
where the righthand side is a sum over all $\mu \in \mathbb{Z}_{\geq 0}^n$ satisfying $\sum_{i=1}^n \mu_i = k$ and $\sum_{i=1}^n i\mu_i = n$.
\end{conjecture}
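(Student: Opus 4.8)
A natural route to \Cref{con:Stanton} is to first strip the Fuss--Catalan content from the identity, reducing it to a pure $q$-series statement, and then to prove that statement either bijectively or by a generating-function argument. For the first step, observe that $f(n+2,k-1;q)$ and every $a_\mu(q)$ appearing on the right-hand side carry a common factor: from
\[
\frac{1}{[n+k+1]_q}\qbinom{n+k+1}{k}=\frac{[n+k]_q!}{[k]_q!\,[n+1]_q!}=\frac{1}{[n+1]_q}\qbinom{n+k}{k}
\]
one gets $f(n+2,k-1;q)=G(q)\qbinom{n-1}{k-1}$ and $a_\mu(q)=G(q)\qbinom{k}{\mu_1,\ldots,\mu_n}$, where $G(q)$ is this common (nonzero) rational factor. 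Cancelling $G(q)$ in \Cref{con:Stanton} makes it equivalent to the polynomial identity
\[
q^{\,2n+k(k-1)}\qbinom{n-1}{k-1}\;=\;\sum_{\mu}q^{\,b(\mu)}\qbinom{k}{\mu_1,\ldots,\mu_n},
\]
summed over $\mu\in\mathbb{Z}_{\ge 0}^{n}$ with $\sum_i\mu_i=k$ and $\sum_i i\mu_i=n$. One first records the consistency checks: at $q=1$ this is the tautology that a dissection of $P_{n+2}$ by $k-1$ diagonals has a well-defined type; both sides have degree $n(k+1)$; and both have lowest term $q^{\,2n+k(k-1)}$, i.e.\ $\min_\mu b(\mu)=2n+k(k-1)$, attained at the near-triangulation type (one large subgon and $k-1$ triangles).

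For the main step, build a common combinatorial model on $C(n,k)$, the set of compositions $c=(c_1,\ldots,c_k)$ of $n$ into $k$ positive parts; write $\mathfrak{S}(\mu)\subseteq C(n,k)$ for those of content $\mu$, so that $C(n,k)=\bigsqcup_\mu\mathfrak{S}(\mu)$. Expanding $\qbinom{k}{\mu_1,\ldots,\mu_n}=\sum_{w\in\mathfrak{S}(\mu)}q^{\operatorname{inv}(w)}$ rewrites the right-hand side of the reduced identity as $\sum_{c\in C(n,k)}q^{\,b(\mu(c))+\operatorname{inv}(c)}$, where $\mu(c)$ is the content of $c$. On the left, the partial-sum bijection $c\mapsto\{c_k,\ c_{k-1}+c_k,\ \ldots,\ c_2+\cdots+c_k\}$ from $C(n,k)$ to the $(k-1)$-element subsets of $[n-1]$, together with the standard inversion-number formula for $q$-binomials, rewrites it as $\sum_{c\in C(n,k)}q^{\,2n+k(k-1)-\binom{k}{2}+\sum_i(i-1)c_i}$. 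Thus it remains to prove that these two statistics on $C(n,k)$ are equidistributed.

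I would pursue two routes. \emph{Bijectively}, one seeks $\Phi\colon C(n,k)\to C(n,k)$ with $b(\mu(c))+\operatorname{inv}(c)=2n+k(k-1)-\binom{k}{2}+\sum_i(i-1)(\Phi c)_i$. The subtlety -- and the reason the conjecture is not routine -- is that $\Phi$ cannot preserve the multiset of parts of a composition: already for $n=4$, $k=2$ the required bijection is the single $3$-cycle on $C(4,2)$ sending $(1,3)\mapsto(3,1)\mapsto(2,2)\mapsto(1,3)$, which mixes the two types. Hence $\Phi$ must trade ``content'' (through $b$) against ``order'' (through $\operatorname{inv}$); the natural source for it is the dissection picture, where $b(\mu(T))$ together with a statistic on dissections of type $\mu$ ought to assemble into \emph{the} statistic making $f(n+2,k-1;q)$ a generating function over all dissections of $P_{n+2}$ with $k-1$ diagonals. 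One would therefore first upgrade \cite[Theorem~7.1]{reiner2004cyclic} and \Cref{thm:CSPAmu} to statistic-level statements, in the spirit of \cite{eu2008cyclic}, and then verify compatibility. \emph{Via generating functions}, fix $k$, mark $n$ by a variable $x$, and sum the reduced identity over $n\ge k$: the $q$-negative binomial theorem $\sum_{m\ge 0}\qbinom{m+k-1}{k-1}y^m=\prod_{i=0}^{k-1}(1-q^iy)^{-1}$ at $y=q^2x$ turns the left-hand side into $q^{\,k(k+1)}x^k\prod_{i=2}^{k+1}(1-q^ix)^{-1}$, while shifting every part of $\lambda(\mu)$ down by $1$ factors the same monomial out of the right-hand side; what remains is the product formula
\[
\sum_{w\in\mathbb{Z}_{\ge 0}^{k}}q^{\,b(\mu(w))+\operatorname{inv}(w)}\,x^{|w|}\;=\;\prod_{i=2}^{k+1}\frac{1}{1-q^ix}
\]
(with $b$ extended to length-$k$ weak compositions by the same rule). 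I would prove this by induction on $k$, peeling off the smallest part of $\lambda(w)$ along with the inversions it generates; for $k=2$ the decomposition $(w_1,w_2)\mapsto\max-\min-[w_1>w_2]$, with the marked variable recording $2\min+[w_1>w_2]$, accounts exactly for the new factor $(1-q^3x)^{-1}$.

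In both routes the combinatorial heart -- and the main obstacle -- is controlling the joint statistic $b(\mu)+\operatorname{inv}$: in the first, producing the content-mixing bijection (which, if found, would also pin down Stanton's statistic on dissections and is thus the more desirable outcome), and in the second, tracking $b(\mu)+\operatorname{inv}$ through the inductive peel-off, which I expect to be the more tractable of the two.
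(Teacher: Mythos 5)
The statement you are addressing is stated in the paper as a \emph{conjecture}: the authors do not prove it, they only observe that it is equivalent (after cancelling the common factor) to \Cref{eq:Stanton} and report verifications for $k=2$ and, by an ad hoc induction on $n$, for $k=3$. Your reduction is correct and is exactly the paper's: the computation $\frac{1}{[n+k+1]_q}\qbinom{n+k+1}{k}=\frac{1}{[n+1]_q}\qbinom{n+k}{k}$ is right, so the conjecture is indeed equivalent to $q^{2n+k(k-1)}\qbinom{n-1}{k-1}=\sum_\mu q^{b(\mu)}\qbinom{k}{\mu_1,\ldots,\mu_n}$. Your further reformulations (passing to compositions via $\qbinom{k}{\mu}=\sum_{w\in\mathfrak{S}(\mu)}q^{\operatorname{inv}(w)}$, the observation that any bijective proof must mix contents, and the fixed-$k$ product-formula version) go beyond what the paper records and are plausible and potentially useful; the generating-function route in particular is a natural systematization of the paper's $k=3$ verification, which also proceeds by fixing $k$ and inducting on $n$.

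However, this is a proposal, not a proof, and the gap is exactly where you say it is: the equidistribution of $b(\mu(c))+\operatorname{inv}(c)$ with the partial-sum statistic on $C(n,k)$ is asserted as ``what remains to prove'' and is never established. Neither the bijection $\Phi$ nor the induction behind the claimed identity $\sum_{w}q^{b(\mu(w))+\operatorname{inv}(w)}x^{|w|}=\prod_{i=2}^{k+1}(1-q^ix)^{-1}$ is carried out; only the $k=2$ base case is sketched. So the conjecture remains open after your argument, just as it is in the paper. One further caution before building on your intermediate computations: the paper's definition of $b(\mu)$ is written for a strictly decreasing $\lambda(\mu)$ and the displayed coefficient pattern $2k,2(k-1),\ldots,2,1$ is ambiguous (and, read literally, fails the $n=4$, $k=2$ instance you use as your test case), so you should first pin down the intended normalization of $b$ --- your claims that $\min_\mu b(\mu)=2n+k(k-1)$ is attained at the near-triangulation type, and your explicit three-cycle on $C(4,2)$, both depend on which reading is correct.
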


After some algebraic manipulations, one can see that Conjecture \ref{con:Stanton} can be simplified as \begin{equation}
q^{2n + k(k-1)}\qbinom{n-1}{k-1} = \sum_\mu q^{b(\mu)} \qbinom{k}{\mu_1,\ldots,\mu_n}.\label{eq:Stanton}
\end{equation}
Note that at $q=1$, \Cref{eq:Stanton} recovers a simple formula regarding the number of weak compositions of $n$.

When $k = 2$, verifying \Cref{eq:Stanton} is quite easy. We were able to inductively verify \Cref{eq:Stanton} for $k = 3$ using the following process. Let the set of partitions of $n$ into $k$ parts be denoted $P(n,k)$. There is a natural inclusion, call it $\iota$, from $P(n-1,k)$ into $P(n,k)$ given by $\iota(\lambda_1,\ldots,\lambda_k) = (\lambda_1+1,\ldots,\lambda_k)$. When $k = 3$, one can characterize the complement $P(n,3) \backslash \iota(P(n-1,3))$ as well as all $\lambda \in P(n-1,3)$ such that the number of distinct parts in $\lambda$ differs from the number in $\iota(\lambda)$. Using these observations, we can classify the difference of the right-hand side  of \Cref{eq:Stanton} for $k = 3$ and the pair $n-1,n$ and show it matches the difference of the left-hand side.

Eu and Fu generalized \cite[Theorem 7.1]{reiner2004cyclic} to \emph{$s$-divisible dissections}. These are dissections in which every subgon has size $sj+2$ for a fixed positive integer $s$; the setting of Reiner, Stanton and White's work is the $s=1$ case. Dissections into $(sj+2)$-gons form a geometric model of the faces of a generalized cluster complex in type $A$ \cite{GenClusterComplex}. 

Note that $s$-divisible dissections only exist for polygons with $sn+2$ vertices. The polynomial which Eu and Fu associated to the set of $s$-divisible dissections of $P_{sn+2}$ with $k$ diagonals is \[
G(s,n,k;q) = \frac{1}{[sn+k+2]_q} \qbinom{sn+k+2}{k+1}\qbinom{n-1}{k}.
\]

Based on Conjecture \ref{con:Stanton}, one would hope that $G(s,n,k;q)$ can also be written as a sum of polynomials $a_\mu(q)$, multiplied by appropriate $q$-powers.

\begin{quest}
Fix positive integers $s$ and $n$ and let $1 \leq k \leq n$. Is there a family of integers $b_s(\mu)$ such that \[
G(s,n,k-1;q) = \sum_\mu q^{b_s(\mu)}a_\mu(q)
\]
where the righthand side is a sum over all $\mu \in \mathbb{Z}^{sn+2}_{\geq 0}$ satisfying $\sum_{i=1}^n \mu_i = k, \sum_{i=1}^n i\mu_i = sn$, and $\mu_i > 0$ only if $s$ divides $i+2$?
\end{quest}

\section{$(m+2)$-angulations of Punctured Polygons}
\label{sec: m-angulations of punctured polygons}
Let $P_n^\bullet$ denote a disc with $n$ marked points on the boundary and one on the interior, referred to as a (once) punctured polygon. As before, we will enumerate the vertices on the boundary as $v_0,\ldots,v_{n-1}$ moving clockwise. We will refer to the marked point on the interior, i.e. the puncture, as $\bullet$.

We will refer to non-self-intersecting curves with endpoints in marked points $\{v_0,\ldots,v_{n-1},\bullet\}$ as \emph{arcs} and we will refer to maximal sets of pairwise non-crossing arcs as \emph{triangulations}. We remark that arcs are always considered up to isotopy and so saying  a pair of arcs does not cross precisely means there exist non-crossing representatives in the isotopy class of each. 

 Arcs incident to the puncture, i.e. those of the form $(v_i,\bullet)$, will be referred to as \emph{spokes}. For example,  consider the two triangulations of $P_6^\bullet$ in \Cref{fig:typeDTriangulation}. The triangulation on the left has four spokes while the triangulation on the right has one spoke. By the maximality of a triangulation, every triangulation of $P_n^\bullet$ will have at least one spoke.

Given $i \neq j$, there are two non-isotopic arcs incident to $v_i$ and $v_j$. 
Notice every arc separates $P_n^\bullet$ into a smaller (ordinary) polygon and a smaller punctured polygon. 
We let $(v_i,v_j)$ be the arc such that the vertices $v_{i+1},\ldots,v_{j-1}$ sit in the smaller polygon without a puncture.  
For example, in \Cref{fig:typeDTriangulation}, both triangulations have the arc $(v_5,v_1)$, but only the triangulation on the right contains the arc $(v_1,v_5)$. We have dashed $(v_1,v_5)$ in the triangulation on the right for emphasis.

There are two special instances of arcs between boundary vertices. 
The boundary edges are exactly $(v_i,v_{i+1})$. 
For each $0 \leq i \leq n-1$, we also have the arc $(v_i,v_i)$ which cuts out a once-punctured monogon. 
For instance, the triangulation on the right in  \Cref{fig:typeDTriangulation} contains the arcs $(v_1,v_1)$.

\begin{figure}
\centering
\begin{tabular}{cc}
\begin{tikzpicture}[scale=0.4]
    \newdimen\R
    \R=2.7cm
    
    \draw (0:\R) \foreach \x in {60,120,...,360} {  -- (\x:\R) };   

    \foreach \x/\l/\p in
     {60/{\small $v_0$}/above,
      120/{\small $v_5$}/above,
      180/{\small $v_4$}/left,
      240/{\small $v_3$}/below,
      300/{\small $v_2$}/below,
     360/{\small $v_1$}/right
     }
    \node[inner sep=1pt,circle,draw,fill,label={\p:\l}] at (\x:\R) {};

    \draw[fill=black] (0,0) circle (.22cm);

    \draw (120:\R) -- (360:\R) -- (0,0) -- cycle;
    \draw (180:\R) -- (300:\R) -- (0,0) -- cycle;

\end{tikzpicture}&
\begin{tikzpicture}[scale=0.4]
   \newdimen\R
    \R=2.7cm
    
    \draw (0:\R) \foreach \x in {60,120,...,360} {  -- (\x:\R) };   

    \foreach \x/\l/\p in
     {60/{\small $v_0$}/above,
      120/{\small $v_5$}/above,
      180/{\small $v_4$}/left,
      240/{\small $v_3$}/below,
      300/{\small $v_2$}/below,
     360/{\small $v_1$}/right
     }
    \node[inner sep=1pt,circle,draw,fill,label={\p:\l}] at (\x:\R) {};

    \draw[fill=black] (0,0) circle (.22cm);
    
    \draw(\R,0) to [out = 150, in = 90] (-0.8,0);
    \draw(\R,0) to [out = 210, in = -90] (-0.8,0);
    \draw(\R,0) -- (0,0);
    \draw(\R,0) to [out = 140, in = -30] (2.7*-0.5,2.7*0.866);
    \draw[dashed](\R,0) to [out = 220, in = 0] (0,-1.4);
    \draw[dashed](0,-1.4) to [out = 180, in = -90] (2.7*-0.5,2.7*0.866);
    \draw(\R,0) to [out = 240, in = 15] (2.7*-0.5,-2.7*0.866);
    \draw (2.7*-0.5,-2.7*0.866) to [out = 105, in = 255] (2.7*-0.5,2.7*0.866);
\end{tikzpicture}\\
\end{tabular}
    \caption{Two triangulations of $P_6^\bullet$. Arc $(v_1,v_5)$ in the triangulation on the right is dashed for emphasis.}
\label{fig:typeDTriangulation}
\end{figure}
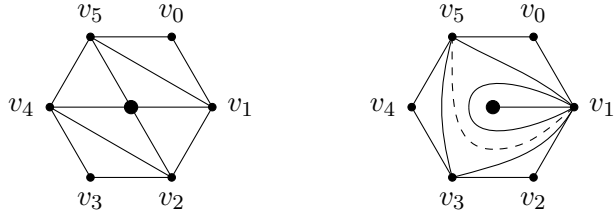

\begin{definition}
Let $m \geq 1$. An \emph{$(m+2)$-angulation} $T$ on $P_n^\bullet$ is a set of arcs which are pairwise non-crossing and such that all connected components of $P_n^\bullet \backslash T$ are $(m+2)$-gons.
\end{definition}

The latter condition implies that every $(m+2)$-angulation contains at least one spoke since otherwise there would be a connected component which contained the puncture.

\subsection{Enumeration}
\label{subsec: enumeration}
Our goal in this section is to enumerate all $(m+2)$-angulations of $P_n^\bullet$ with a fixed number of spokes. First, we determine the necessary relationship between $n$ and $m$ in order for $(m+2)$-angulations of $P_n^\bullet$ to exist.

\begin{lemma}
\label{lem:WhenCanWemangulate}
An $(m+2)$-angulation of $P_n^\bullet$ exists if and only if $n = m\ell$ for $\ell \geq 1$. If $n = m\ell$, there are exactly $\ell$ connected components in $P_n^\bullet \backslash T$ for every $(m+2)$-angulation $T$.
\end{lemma}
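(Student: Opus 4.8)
The plan is to treat an $(m+2)$-angulation $T$ of $P_n^\bullet$ as a CW decomposition of a disk and to read off two linear relations between the number of faces and the number of arcs. Write $f$ for the number of connected components of $P_n^\bullet\setminus T$ and $a$ for the number of arcs of $T$ other than boundary edges. In this decomposition the $0$-cells are the $n+1$ marked points $v_0,\dots,v_{n-1},\bullet$; the $1$-cells are the $n$ boundary edges together with the $a$ arcs of $T$; and the $2$-cells are the $f$ faces.

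First I would invoke the Euler characteristic: a disk is contractible, so $(n+1)-(n+a)+f=1$, giving $f=a$. Next I would double-count the incident pairs $(F,s)$ where $F$ is a face and $s$ a side of $F$. Since every face is an $(m+2)$-gon there are exactly $(m+2)f$ such pairs; counting by sides instead, each boundary edge is a side of a single face and each arc of $T$ is a side of exactly two faces, counted with multiplicity (so an arc enclosing a self-folded triangle contributes $2$ to that one triangle). Hence $(m+2)f=n+2a$, and substituting $a=f$ gives $mf=n$. Thus $m\mid n$; writing $n=m\ell$ forces $\ell=f\geq 1$, which is precisely the asserted number of connected components. This disposes of the ``only if'' direction and of the face count simultaneously.

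For the converse, given $n=m\ell$ with $\ell\geq 1$, I would exhibit an explicit angulation: the $\ell$ equally spaced spokes $T_0=\{(v_{im},\bullet): 0\leq i\leq \ell-1\}$ are pairwise non-crossing, and cutting $P_n^\bullet$ along $T_0$ leaves $\ell$ regions, the one lying between $(v_{im},\bullet)$ and $(v_{(i+1)m},\bullet)$ being bounded by those two spokes and the $m$ boundary edges running from $v_{im}$ to $v_{(i+1)m}$. Each such region has $m+2$ sides, so $T_0$ is an $(m+2)$-angulation of $P_n^\bullet$; when $\ell=1$ the single region is the $(m+2)$-gon with both boundary corners at $v_0$ (degenerating to a self-folded triangle when $m=1$).

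The one point I expect to require care is the bookkeeping for degenerate faces. When $m=1$, self-folded triangles can occur, and one must check that both counting steps accommodate them: the self-folded triangle is still a $2$-cell (its attaching map is merely non-injective, which does not affect $\chi$), and the spoke it encloses is a side of that triangle twice. If one prefers to avoid this subtlety, I would instead observe that for $m\geq 2$ a once-punctured monogon admits no $(m+2)$-angulation, so no arc of the form $(v_i,v_i)$ can occur and every face is an honest $(m+2)$-gon with distinct vertices, handling the case $m=1$ (where the statement just says a triangulation of $P_n^\bullet$ has $n$ triangles) via the standard count for triangulated punctured surfaces.
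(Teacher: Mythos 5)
Your proof is correct, but it takes a genuinely different route from the paper's. The paper argues by cutting: every $(m+2)$-angulation of $P_n^\bullet$ must contain a spoke $(v_i,\bullet)$, and cutting the punctured polygon along that spoke produces an ordinary $(n+2)$-gon, so the statement reduces to the classical fact that $(m+2)$-angulations of a polygon exist exactly for sizes $m\ell+2$ and always cut out $\ell$ faces. You instead stay inside the punctured disc and extract both relations directly: the Euler characteristic gives (number of faces) $=$ (number of arcs), the face--side double count gives $(m+2)f = n+2a$, hence $mf=n$, and existence is settled by the explicit fan of $\ell$ equally spaced spokes rather than by citing the polygon case. The paper's reduction is shorter and delegates all degeneracies to the well-known unpunctured statement; yours is self-contained, makes the count $\ell = n/m$ transparent, and would adapt to other marked surfaces by changing the Euler characteristic, at the price of the extra care you correctly flag for $m=1$, where self-folded configurations force you to count sides with multiplicity and allow non-injective attaching maps (or to note, as you do, that loops $(v_i,v_i)$ cannot occur once $m\geq 2$). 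One point you leave implicit, which the paper states explicitly: since every face is an $(m+2)$-gon, no face contains $\bullet$ in its interior, so some spoke is present and $\bullet$ genuinely lies in the $1$-skeleton -- this is what legitimizes counting $\bullet$ as a $0$-cell of your decomposition.
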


\begin{proof}
Every $(m+2)$-angulation must have at least one spoke. Given such a spoke, say $(v_i,\bullet)$, we see that the $P_n - (v_i,\bullet)$ is a $(n+2)$-gon. It is well-known that an $(m+2)$-angulation of $P_{n+2}$ exists exactly when $n+2 = m\ell + 2$ for some $\ell \geq 1$, and such an $(m+2)$-angulation breaks $P_{n+2}$ into $\ell$ total $(m+2)$-gons. 
\end{proof}

Fontaine and Plamondon \cite{FontainePlamondon} enumerated triangulations of $P_n^\bullet$ with a fixed number of spokes. 
Our enumeration is inspired by theirs and in particular we recover their enumeration in the triangulation case.
Fontaine and Plamondon's proof used the fact that the number of triangulations of $P_n$ which do not include any diagonals incident to a fixed vertex is a Catalan number. 
As an intermediate result, we provide a computation for the analogous situation, but instead for $(m+2)$-anguations. To that end, define $p_\ell^{(m)}$ to be the number of $(m+2)$-angulations of $P_{m\ell + 2}$ which do not include a diagonal of the form $(v_0,v_i)$ for any $2 \leq i \leq m\ell$. 

In the following, let $c^{(m)}(x) = \sum_{\ell\geq 0} c^{(m)}_\ell x^\ell$ be the generating function of the Fuss-Catalan numbers. Recall $c^{(m)}_\ell = \frac{1}{m\ell + 1} {(m+1)\ell \choose \ell}$ counts the number of $(m+2)$-angulations of a $P_{m\ell+2}$.

\begin{lemma}
\label{lem:CountSkippingOneVertex}
The generating function for the numbers $p_\ell^{(m)}$ is given by \[
p^{(m)}(x):=\sum_{\ell\geq 0} p_\ell^{(m)}x^\ell = xc^{(m)}(x)^m.
\]
\end{lemma}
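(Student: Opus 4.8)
The plan is to set up a combinatorial decomposition of the $(m+2)$-angulations counted by $p_\ell^{(m)}$ and translate it into the functional equation $p^{(m)}(x) = x\,c^{(m)}(x)^m$. Recall $p_\ell^{(m)}$ counts $(m+2)$-angulations of $P_{m\ell+2}$ having no diagonal of the form $(v_0,v_i)$ for $2 \le i \le m\ell$. First I would observe that in such a dissection the vertex $v_0$ belongs to a unique subgon, which is an $(m+2)$-gon, and since no diagonal touches $v_0$ except the two boundary edges $(v_0,v_1)$ and $(v_0,v_{m\ell+1})$, this subgon must have $v_0$, $v_1$ and $v_{m\ell+1}$ among its vertices, with its remaining $m-1$ vertices being some diagonal endpoints $v_{j_1} < v_{j_2} < \cdots < v_{j_{m-1}}$ chosen from $\{v_2,\dots,v_{m\ell}\}$. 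Setting $j_0 = 1$ and $j_m = m\ell+1$, the diagonals of this central $(m+2)$-gon cut off $m$ independent regions: for each $1 \le t \le m$, the diagonal $(v_{j_{t-1}}, v_{j_t})$ bounds a sub-polygon on the vertices $v_{j_{t-1}}, v_{j_{t-1}+1},\dots, v_{j_t}$, which must itself be $(m+2)$-angulated with no further constraint.

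The key numerical step is then to check that each such sub-polygon has a number of vertices congruent to $2 \bmod m$, so that it admits an $(m+2)$-angulation, and that these $m$ blocks together with the central $(m+2)$-gon account for all $m\ell + 2$ boundary vertices (with the $m-1$ chosen vertices and the two fixed endpoints $v_1, v_{m\ell+1}$ shared between the central piece and the blocks). Writing the size of the $t$-th block as $m r_t + 2$ for $r_t \ge 0$, a bead-counting argument gives $r_1 + r_2 + \cdots + r_m = \ell - 1$ (the ``$-1$'' coming from the central $(m+2)$-gon itself). Since an $(m+2)$-angulation of $P_{mr+2}$ can be chosen in $c_r^{(m)}$ ways independently in each block, summing over all choices yields
\[
p_\ell^{(m)} = \sum_{\substack{r_1,\dots,r_m \ge 0 \\ r_1 + \cdots + r_m = \ell - 1}} c_{r_1}^{(m)} c_{r_2}^{(m)} \cdots c_{r_m}^{(m)},
\]
and multiplying by $x^\ell$ and summing over $\ell \ge 1$ (noting $p_0^{(m)} = 0$, since $P_2$ has no angulation, and indeed the right side has no constant term) gives exactly $p^{(m)}(x) = x\,c^{(m)}(x)^m$.

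The main obstacle I anticipate is bookkeeping of the boundary-vertex count and making the decomposition rigorously a bijection rather than just a heuristic: one must be careful that the $m-1$ ``interior'' vertices of the central $(m+2)$-gon are genuinely unconstrained in position (so that the blocks can have any admissible sizes), that the constraint ``no diagonal at $v_0$'' transfers correctly — namely it forces the central subgon to contain $v_0$ but imposes nothing on the blocks — and that the gluing is well-defined in reverse. A clean way to handle this is to phrase it as: choosing an element counted by $p_\ell^{(m)}$ is equivalent to choosing the central $(m+2)$-gon through $v_0$ (equivalently, a weak composition of $\ell - 1$ into $m$ ordered parts recording the block sizes) together with an arbitrary $(m+2)$-angulation of each of the $m$ resulting sub-polygons; the generating-function identity then follows from the product formula for generating functions of sequences. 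Alternatively, if the earlier formula $a_\mu$ or standard Lagrange-inversion-type identities for Fuss-Catalan numbers are invoked, one could instead verify the coefficient identity $[x^\ell]\, x c^{(m)}(x)^m = p_\ell^{(m)}$ directly from known convolution identities for $c^{(m)}$, but the bijective decomposition is the most transparent route and dovetails with the style of the surrounding section.
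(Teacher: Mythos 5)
Your proposal is correct and follows essentially the same route as the paper: isolate the unique $(m+2)$-gon containing $v_0$ (which must use the two boundary edges at $v_0$), record the sizes of the $m$ regions cut off by its remaining edges as a weak composition of $\ell-1$, and convolve the Fuss--Catalan counts to get $p_\ell^{(m)} = [x^{\ell-1}]\,c^{(m)}(x)^m$. The only difference is bookkeeping detail (explicit vertex indices $j_0,\dots,j_m$ and the remark on $p_0^{(m)}=0$), which the paper treats implicitly.
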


\begin{proof}
Set $n = m\ell + 2$. An $(m+2)$-angulation of $P_n$ breaks the polygon into $\ell$ subgons.  If an $(m+2)$-angulation does not include any diagonals incident to $v_0$, then $v_0$ lies on a unique $(m+2)$-gon, say $Q$, and $v_1$ and $v_{n-1}$ also lie on $Q$. That is, $(v_{n-1},v_0)$ and $(v_0,v_1)$ are edges of $Q$.

The subgon $Q$ will have $m$ additional edges. These must be placed so that the resulting subgons can be $(m+2)$-angulated.
We can see the number of ways to do this as the number of ways to write $\ell-1$ as a sum of $m$ nonnegative integers, where each integer counts the number of subgons needed to $(m+2)$-angulate the subgon cut out by one edge of $Q$. In particular, we have a contribution of 0 when an edge of $Q$ coincides with an edge in $P_n$.

Given an admissible choice of edges for $Q$, yielding the weak composition $i_1 + i_2 + \cdots + i_m = \ell-1$, we see the number of ways to complete this to an $(m+2)$-angulation of $P_n$ is $c_{i_1}^{(m)} c_{i_2}^{(m)} \cdots c_{i_{m}}^{(m)}$. By considering all possible choices of $i_1,i_2,\ldots,i_{m}$, we have \begin{align*}
p_\ell^{(m)} = \sum_{i_1 + \cdots + i_{m} = \ell-1} c_{i_1}^{(m)} c_{i_2}^{(m)} \cdots c_{i_{m}}^{(m)} &= [x^{\ell-1}] c^{(m)}(x)^m.
\end{align*}
The claim now follows.

\end{proof}

Understanding the quantity $p^{(m)}_\ell$ in terms of Fuss-Catalan numbers enables us to do the desired enumeration.

\begin{theorem}
\label{thm:CountMAngulation}
Let $n = m\ell$ and let $1 \leq s \leq \ell$. The number of $(m+2)$-angulations of $P_n^\bullet$ with $s$ spokes is  $m {n+\ell -s -1 \choose n-1}$.
\end{theorem}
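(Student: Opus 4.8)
The plan is to count $(m+2)$-angulations of $P_n^\bullet$ by conditioning on the cyclic position of the spokes and the way the spokes cut the punctured polygon into ordinary polygons. Fix $n=m\ell$ and a number of spokes $s$ with $1\le s\le \ell$. A triangulation with spokes $(v_{i_1},\bullet),\dots,(v_{i_s},\bullet)$ (indices in clockwise cyclic order) decomposes $P_n^\bullet$ into $s$ ordinary polygons $R_1,\dots,R_s$, where $R_t$ has the two spokes $(v_{i_t},\bullet)$ and $(v_{i_{t+1}},\bullet)$ as two of its sides together with the boundary arc from $v_{i_t}$ to $v_{i_{t+1}}$; if $v_{i_{t+1}}=v_{i_t+m j_t+?}$ then $R_t$ is a polygon on $mj_t+2$-ish many vertices. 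The key structural point is that, since every subgon has size $m+2$, the two spokes bounding $R_t$ together with the triangle at the puncture force the number of boundary vertices strictly between $v_{i_t}$ and $v_{i_{t+1}}$ to be $\equiv -1 \pmod m$ — equivalently, each $R_t$ is itself $(m+2)$-angulated with \emph{no diagonal incident to the spoke vertex $v_{i_t}$}, which is exactly the situation counted by $p_\ell^{(m)}$ in \Cref{lem:CountSkippingOneVertex}.

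Concretely, I would argue: choosing the $s$ spokes amounts to choosing the cyclic positions $0\le i_1<i_2<\cdots<i_s\le n-1$; writing $n_t$ for the number of boundary vertices of $R_t$ (so $n_1+\cdots+n_s = n + s$, counting each spoke vertex in two regions), the admissibility condition is $n_t = m q_t + 2$ with $q_t\ge 1$, hence $\sum q_t = \ell$. Given such a choice, the number of ways to complete region $R_t$ to an $(m+2)$-angulation using no diagonal at the distinguished vertex is $p_{q_t}^{(m)}$. Summing over all placements and all $(q_1,\dots,q_s)$ with $q_t\ge 1$ and $\sum q_t=\ell$, and accounting for the $n$ rotational choices of where $v_{i_1}$ sits (with an $s$-fold overcount, since any of the $s$ spokes could be labeled first — giving a factor $n/s$), I would get
\[
\#\{(m+2)\text{-angulations with } s \text{ spokes}\} \;=\; \frac{n}{s}\sum_{\substack{q_1+\cdots+q_s=\ell\\ q_t\ge 1}} \prod_{t=1}^s p_{q_t}^{(m)} \;=\; \frac{n}{s}\,[x^\ell]\,p^{(m)}(x)^s.
\]
By \Cref{lem:CountSkippingOneVertex}, $p^{(m)}(x)=x\,c^{(m)}(x)^m$, so $p^{(m)}(x)^s = x^s c^{(m)}(x)^{ms}$, and it remains to extract $[x^{\ell-s}]c^{(m)}(x)^{ms}$. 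Using the standard Fuss-Catalan coefficient identity $[x^j]\,c^{(m)}(x)^r = \frac{r}{mj+r}\binom{mj+r}{j}$, with $r=ms$ and $j=\ell-s$, this gives $\frac{ms}{m(\ell-s)+ms}\binom{m(\ell-s)+ms}{\ell-s} = \frac{ms}{m\ell}\binom{m\ell-ms+ms}{\ell-s}$... wait, $m(\ell-s)+ms = m\ell$, so the coefficient is $\frac{ms}{m\ell}\binom{m\ell - ? }{\ell-s}$; carefully, $\binom{mj+r}{j}=\binom{m(\ell-s)+ms}{\ell-s}=\binom{m\ell}{\ell-s}$ is not quite the target, so I would instead use the form $[x^j]c^{(m)}(x)^r = \frac{r}{mj+r}\binom{mj+r-1}{j}\cdot\frac{mj+r}{\,\cdot\,}$ — more precisely the cycle-lemma version $[x^j]c^{(m)}(x)^r=\frac{r}{(m+1)j+r}\binom{(m+1)j+r}{j}$, which with $r=ms$, $j=\ell-s$ yields $\frac{ms}{(m+1)(\ell-s)+ms}\binom{(m+1)(\ell-s)+ms}{\ell-s}=\frac{ms}{m\ell + \ell - s}\binom{m\ell+\ell-s}{\ell-s}$. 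Multiplying by $n/s = m\ell/s$ gives $\frac{m^2\ell}{m\ell+\ell-s}\binom{n+\ell-s}{\ell-s}$, and simplifying $\frac{m^2\ell}{(m+1)\ell - s}\binom{n+\ell-s}{\ell-s}$ — I would then convert $\binom{n+\ell-s}{\ell-s}=\binom{n+\ell-s}{n}$ and absorb the prefactor via $\binom{n+\ell-s}{n}\cdot\frac{\text{stuff}}{\text{stuff}} = m\binom{n+\ell-s-1}{n-1}$, using $\binom{N}{n} = \frac{N}{n}\binom{N-1}{n-1}$ with $N = n+\ell-s$, which matches the claimed answer $m\binom{n+\ell-s-1}{n-1}$ after the arithmetic $\frac{m^2\ell}{(m+1)\ell-s}\cdot\frac{n+\ell-s}{n} = m$ reduces correctly (this is the routine algebra I will verify but not belabor here).

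The main obstacle is the combinatorial decomposition step, not the algebra: I must argue precisely that an $(m+2)$-angulation is \emph{uniquely} recovered from (the set of spokes, the $(m+2)$-angulation of each region $R_t$ avoiding diagonals at its leading spoke vertex), and that this avoidance condition is both necessary and sufficient — necessary because a diagonal from $v_{i_t}$ inside $R_t$ would, combined with the spoke $(v_{i_t},\bullet)$, be forced in a way that contradicts all faces having size $m+2$ unless it is absent, and sufficient because $p^{(m)}_{q_t}$ counts exactly those. I also need to handle the degenerate cases carefully: the once-punctured monogon faces $(v_i,v_i)$, the region sizes where some $n_t = m+2$ already (no diagonals needed), and the cyclic labeling overcount, making sure the factor $n/s$ is exactly right and that $s\le \ell$ is the correct range (it is, since $\sum q_t = \ell$ with each $q_t\ge 1$). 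Once the decomposition is nailed down, the generating-function bookkeeping via \Cref{lem:CountSkippingOneVertex} and the Lagrange-inversion/cycle-lemma coefficient extraction is standard.
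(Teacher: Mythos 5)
Your overall route is the same as the paper's: anchor the count at a spoke, cut $P_n^\bullet$ along the $s$ spokes into ordinary polygons of sizes $mj_t+2$ indexed by a composition $j_1+\cdots+j_s=\ell$, count the completions of each region by $p^{(m)}_{j_t}$ from \Cref{lem:CountSkippingOneVertex}, extract the coefficient of $c^{(m)}(x)^{ms}$ via the standard Fuss--Catalan power identity, and correct by the factor $n/s$; your closing algebra also matches the paper's.

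However, the combinatorial heart of the argument --- the step you yourself single out as the one that must be nailed down --- is stated incorrectly. Once the spokes are fixed, the condition on the $(m+2)$-angulation of a region $R_t$ (whose vertex set is $\bullet, v_{i_t},\dots,v_{i_{t+1}}$) is that no \emph{further} diagonal is incident to the puncture $\bullet$, since any such diagonal would be an additional spoke and would change $s$; diagonals of $R_t$ incident to the spoke endpoint $v_{i_t}$ are perfectly admissible. For instance, among the $5$-angulations of $P_{15}^\bullet$ with spokes $(v_0,\bullet),(v_6,\bullet),(v_9,\bullet)$ in the paper's example, the region on $\bullet,v_0,\dots,v_6$ may be completed by the diagonal $(v_0,v_4)$, which is incident to $v_0$, whereas under your condition the completion using $(\bullet,v_3)$ --- an extra spoke --- would be permitted. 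So restriction to the regions is not a bijection onto the sets you describe, your necessity sketch (``a diagonal from $v_{i_t}$ \dots contradicts all faces having size $m+2$'') is false, and the phrase ``the triangle at the puncture'' has no meaning for $m>1$. The numerical answer survives only because $p^{(m)}_{j}$ does not depend on which single vertex of the $(mj+2)$-gon is distinguished, so replacing $v_{i_t}$ by $\bullet$ repairs the proof and turns it into exactly the paper's argument. One smaller point of bookkeeping: either anchor one spoke at $v_0$, sum over compositions, and multiply by $n/s$ (as the paper does), or sum over all placements of the spokes with no extra factor; your prose suggests doing both, although the displayed formula you write down is the correct anchored version.
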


\begin{proof}
It is easiest to first only consider $(m+2)$-angulations of $P_n^\bullet$ which include the spoke $(v_0,\bullet)$. We must choose $s-1$ additional spokes in such a way that the polygons cut out by consecutive spokes can be $(m+2)$-angulated. Recall from \Cref{lem:WhenCanWemangulate} that any $(m+2)$-angulation of $P_n^\bullet$ will contain $\ell$ subgons. We can see our choices of valid locations for spokes as a way to write $\ell$ as a sum of $s$ positive integers, i.e., a composition of $\ell$. In particular, the composition, say $j_1 + \cdots + j_s = \ell$ is associated to a choice of spokes such that connected component cut out by the first spoke (i.e. $(v_0,\bullet)$) and second is a polygon of size $mj_1 + 2$, and so on. For one such admissible set of spokes, we then put an $(m+2)$-angulation on each smaller polygon. When doing this, we do not want to include any diagonal incident to $\bullet$ since we have already chosen all of our spokes. Therefore, we see that the number of ways to complete this choice to an $(m+2)$-angulation is $p_{j_1}^{(m)} p_{j_2}^{(m)} \cdots p_{j_s}^{(m)}$. Then, if we sum all possible choices of spokes, we have 
\[ \sum_{j_1 + \cdots + j_s = \ell} p_{j_1}^{(m)} p_{j_2}^{(m)} \cdots p_{j_s}^{(m)}
=[x^\ell] (p^{(m)}(x))^s
= [x^{\ell-s}] (c^{(m)}(x))^{ms}
\]
where the last equality follows from \Cref{lem:CountSkippingOneVertex}.  Note that since $p^{(m)}(x)$ has no constant term, summing over compositions here is equivalent to summing over weak compositions.

From \cite[Equation 7.70]{Concrete}\footnote{Note that our choice of index on the Fuss-Catalan numbers differs from the convention in this reference.}, we have that  
\begin{align*}
    [x^{\ell-s}] (c^{(m)}(x))^{ms} 
    & = \frac{ms}{(\ell-s)(m+1) + ms}{(\ell-s)(m+1) + ms \choose \ell-s}\\
    & = \frac{ms}{n + \ell - s} {n + \ell - s \choose n}\\
\end{align*}
where the second equality follows from substituting $\ell m = n$. 

So far, we have enumerated all $(m+2)$-angulations of $P_n^\bullet$ with $s$ spokes where one spoke is $(v_0,\bullet)$. We will encounter every $(m+2)$-angulation with $s$ spokes by rotating each such dissection with a spoke $(v_0,\bullet)$ $n$ times (including the trivial rotation). Doing this in fact produces every $(m+2)$-angulation $s$ times as each spoke will be incident to $v_0$ at one rotation. Therefore, the total number of $(m+2)$-angulations of $P_n^\bullet$ with $s$ spokes is
\[
    \frac{n}{s}\cdot \frac{ms}{n + \ell - s} {n + \ell - s \choose n}
   = m{n+ \ell - s -1 \choose n-1}.
\]
\end{proof}

\begin{example}
Here, we illustrate how to enumerate  5-angulations of $P_{15}^\bullet$ with 3 spokes. 

\begin{center}
\begin{tabular}{ccc}
\begin{tikzpicture}[scale=0.5]
    \newdimen\R
    \R=2.7cm
    \draw (18:\R) \foreach \x in {42,66,...,378} {  -- (\x:\R) };   
    \foreach \x/\l/\p in
     {18/{\small $v_3$}/right,
      42/{\small $v_2$}/above,
      66/{\small $v_1$}/above,
      90/{\small $v_0$}/above,
      114/{\small $v_{14}$}/above,
      138/{\small $v_{13}$}/left,
      162/{\small $v_{12}$}/left,
      186/{\small $v_{11}$}/left,
      210/{\small $v_{10}$}/left,
      234/{\small $v_9$}/below,
     258/{\small $v_8$}/below,
      282/{\small $v_7$}/below,
      306/{\small $v_6$}/right,
      330/{\small $v_5$}/right,
      360-6/{\small $v_4$}/right}
    \node[inner sep=1pt,circle,draw,fill,label={\p:\l}] at (\x:\R) {};
    \draw[fill=black] (0,0) circle (.2cm);
    \draw (90:\R) -- (0,0);
\end{tikzpicture}&
\begin{tikzpicture}[scale=0.5]
    \newdimen\R
    \R=2.7cm
    \draw (18:\R) \foreach \x in {42,66,...,378} {  -- (\x:\R) };   
    \foreach \x/\l/\p in
     {18/{\small $v_3$}/right,
      42/{\small $v_2$}/above,
      66/{\small $v_1$}/above,
      90/{\small $v_0$}/above,
      114/{\small $v_{14}$}/above,
      138/{\small $v_{13}$}/left,
      162/{\small $v_{12}$}/left,
      186/{\small $v_{11}$}/left,
      210/{\small $v_{10}$}/left,
      234/{\small $v_9$}/below,
     258/{\small $v_8$}/below,
      282/{\small $v_7$}/below,
      306/{\small $v_6$}/right,
      330/{\small $v_5$}/right,
      360-6/{\small $v_4$}/right}
    \node[inner sep=1pt,circle,draw,fill,label={\p:\l}] at (\x:\R) {};
    \draw[fill=black] (0,0) circle (.2cm);
    \draw (90:\R) -- (0,0);
    \draw(306:\R) -- (0,0);
    \draw(234:\R) -- (0,0);
\end{tikzpicture}&
\begin{tikzpicture}[scale=0.5]
    \newdimen\R
    \R=2.7cm
    \draw (18:\R) \foreach \x in {42,66,...,378} {  -- (\x:\R) };   
    \foreach \x/\l/\p in
     {18/{\small $v_3$}/right,
      42/{\small $v_2$}/above,
      66/{\small $v_1$}/above,
      90/{\small $v_0$}/above,
      114/{\small $v_{14}$}/above,
      138/{\small $v_{13}$}/left,
      162/{\small $v_{12}$}/left,
      186/{\small $v_{11}$}/left,
      210/{\small $v_{10}$}/left,
      234/{\small $v_9$}/below,
     258/{\small $v_8$}/below,
      282/{\small $v_7$}/below,
      306/{\small $v_6$}/right,
      330/{\small $v_5$}/right,
      360-6/{\small $v_4$}/right}
    \node[inner sep=1pt,circle,draw,fill,label={\p:\l}] at (\x:\R) {};
    \draw[fill=black] (0,0) circle (.2cm);
    \draw (90:\R) -- (0,0);
    \draw(18:\R) -- (0,0);
    \draw(306:\R) -- (0,0);
\end{tikzpicture}\\
\end{tabular}
\end{center}

Since $15 = 3\cdot 5$, when we begin with a spoke $(v_0,\bullet)$, the admissible ways to add two more spokes correspond with the compositions of 5 with 3 parts. These are $2+2+1, 2+1+2, 1+2+2, 3+1+1, 1+3+1$ and $1+1+3$. For example, the composition $2+1+2$ corresponds to placing spokes $(v_6,\bullet)$ and $(v_9,\bullet)$, as in the middle above, while the composition $1+1+3$ corresponds to placing spokes $(v_3,\bullet)$ and $(v_6,\bullet)$, as on the right above. 

There are $(p_2^{(5)})^2 p_1^{(5)} = 9$ ways to complete the middle picture to a 5-angulation without adding more spokes and similarly there are $p_3^{(5)} (p_1^{(5)})^2 = 15$ ways to complete the right picture to a 5-angulation with 3 spokes.
As prescribed in the proof of \Cref{thm:CountMAngulation}, we see there are $27+45 = 72$ total 5-angulations of $P_{15}^\bullet$ with 3 spokes which include $(v_0,\bullet)$, and overall there are $360$ total 5-angulations of $P_{15}^\bullet$.
\end{example}

\begin{corollary}
\label{cor:CountAllMangulationPunctured}
Let $n = m\ell$ for $m,\ell \geq 1$. The number of $(m+2)$-angulations of $P_n^\bullet$ is $m{n + \ell - 1 \choose n} = {n + \ell - 1 \choose \ell}$.
\end{corollary}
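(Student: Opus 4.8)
The plan is to sum the formula from \Cref{thm:CountMAngulation} over all admissible numbers of spokes. Since every $(m+2)$-angulation of $P_n^\bullet$ has at least one spoke and at most $\ell$ spokes (by \Cref{lem:WhenCanWemangulate}, as there are $\ell$ subgons total and each spoke borders at least one subgon), the total count is
\[
\sum_{s=1}^{\ell} m\binom{n+\ell-s-1}{n-1}.
\]
So the corollary reduces to the binomial identity $\sum_{s=1}^{\ell}\binom{n+\ell-s-1}{n-1} = \binom{n+\ell-1}{n}$.

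To prove this identity, I would reindex by setting $t = \ell - s$, so $t$ runs from $0$ to $\ell-1$, and the sum becomes $\sum_{t=0}^{\ell-1}\binom{n-1+t}{n-1}$. This is the classic hockey-stick (Christmas stocking) identity: $\sum_{t=0}^{\ell-1}\binom{n-1+t}{n-1} = \binom{n-1+\ell}{n}$, which follows by induction on $\ell$ using Pascal's rule, or immediately from the standard formula $\sum_{j=r}^{N}\binom{j}{r} = \binom{N+1}{r+1}$ with $r = n-1$ and $N = n+\ell-2$. Multiplying by $m$ gives $m\binom{n+\ell-1}{n}$ for the total number of $(m+2)$-angulations.

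For the final equality $m\binom{n+\ell-1}{n} = \binom{n+\ell-1}{\ell}$, I would substitute $n = m\ell$ and manipulate directly:
\[
m\binom{n+\ell-1}{n} = m\cdot\frac{(n+\ell-1)!}{n!\,(\ell-1)!} = \frac{(n+\ell-1)!}{n!\,(\ell-1)!}\cdot\frac{n}{\ell} = \frac{(n+\ell-1)!}{(n-1)!\,\ell!},
\]
using $m = n/\ell$, and the last expression is exactly $\binom{n+\ell-1}{\ell}$ since $(n+\ell-1) - \ell = n-1$.

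There is no real obstacle here — the content is entirely a routine application of the hockey-stick identity together with a one-line factorial rearrangement — so the only thing to be careful about is bookkeeping: confirming the correct range of $s$ (namely $1 \le s \le \ell$, which is precisely the range in the hypothesis of \Cref{thm:CountMAngulation}) and correctly matching up the reindexed summation bounds with the standard form of the hockey-stick identity.
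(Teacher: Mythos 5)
Your proposal is correct and follows exactly the paper's argument: sum the count from \Cref{thm:CountMAngulation} over $1 \le s \le \ell$, apply the hockey-stick identity to obtain $m\binom{n+\ell-1}{n}$, and convert to $\binom{n+\ell-1}{\ell}$ by a short factorial manipulation using $m = n/\ell$. The only difference is that you spell out the reindexing and the algebra that the paper leaves as routine.
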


\begin{proof}
The number of spokes in an $(m+2)$-angulation of $P_n^\bullet$ is at least 1 and at most $\ell$. We reach our first expression by summing the number of $(m+2)$-angulations with each number of spokes, using \Cref{thm:CountMAngulation} and applying the ``Hockey-Stick identity,'' $\sum_{i=a}^b {i \choose a} = {b+1 \choose a+1}$. The second expression follows from a few algebraic manipulations.
\end{proof}

\subsection{Cyclic Sieving}

Here, we will show that the natural $q$-analogue of our counting formula in \Cref{thm:CountMAngulation} often works as a cyclic sieving polynomial for the set of $(m+2)$-angulations of $P_n^\bullet$.

To this end, let $\mathcal{T}_{n,s}^{\bullet,(m)}$ denote the set of $(m+2)$-angulations of $P_n^\bullet$ with $s$ spokes. Naturally, $\mathcal{T}_{n,s}^{\bullet,(m)}$ is only nonempty if $m$ divides $n$ and $1 \leq s \leq \frac{n}{m}$. Given such a value $s$, define $t_{n,s}^{(m)} = m{n + \frac{n}{m} - s -1 \choose n-1}$ so that $t_{n,s}^{(m)}$ is the cardinality of $\mathcal{T}_{n,s}^{\bullet,(m)}$. We also define \[
t_{n,s}^{(m)}(q) := m\qbinom{n + \frac{n}{m}-s-1}{n-1}.
\]

The group $G_n = \langle \sigma \rangle $ of rotations defined in \Cref{sec: dissections of polygons} also acts naturally on $\mathcal{T}_{n,s}^{\bullet,(m)}$. Our goal now is to show that the polynomial $t_{n,s}^{(m)}(q)$ functions as a cyclic sieving polynomial for many $n$-th roots of unity. The proof of \Cref{prop:CSPPuncturedPolygon} will use the following computations.

\begin{lemma}
\label{lem:q-Calculation-Dn}
Let $d,k$, and $n$ be positive integers.
\begin{enumerate}
\item If $n \equiv k \pmod{d}$, then 
$
\mathop{\Bigl[ \begin{smallmatrix}
\scriptstyle n \\
\scriptstyle k
\end{smallmatrix} \Bigr]}_{q=\zeta_d}
=  \Bigl( \begin{smallmatrix}
\scriptstyle \lfloor \frac{n}{d} \rfloor \\
\scriptstyle \lfloor \frac{k}{d} \rfloor
\end{smallmatrix} \Bigr)
$.
\item If $k \equiv -1 \pmod{d}$ and $n \not\equiv -1 \pmod{d}$, then 
$
\mathop{\Bigl[ \begin{smallmatrix}
\scriptstyle n \\
\scriptstyle k
\end{smallmatrix} \Bigr]}_{q=\zeta_d} = 0
$.
\end{enumerate}
\end{lemma}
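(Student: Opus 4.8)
\textbf{Proof proposal for Lemma \ref{lem:q-Calculation-Dn}.}

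The plan is to reduce both statements to the standard fact that the $q$-binomial coefficient $\qbinom{n}{k}$, evaluated at a primitive $d$-th root of unity $\zeta_d$, is governed by the base-$d$ digits of $n$ and $k$ — the $q$-Lucas theorem. Concretely, writing $n = d\lfloor n/d\rfloor + r$ and $k = d\lfloor k/d\rfloor + t$ with $0 \le r, t < d$, one has $\qbinom{n}{k}_{q=\zeta_d} = \binom{\lfloor n/d\rfloor}{\lfloor k/d\rfloor}\qbinom{r}{t}_{q=\zeta_d}$, and the residual factor $\qbinom{r}{t}_{q=\zeta_d}$ equals $\binom{r}{t}$ since $0 \le r,t < d$ means we are evaluating a polynomial of degree less than $d-1$ in a way that just gives the ordinary binomial. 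Rather than invoke $q$-Lucas as a black box, I would prefer the self-contained route already used in the proof of \Cref{lem:Evaluate_a_mu}: the ratio identity \eqref{eq:Ratioq}, namely $\lim_{q\to\zeta_d}[g]_q/[h]_q = g/h$ when $g \equiv h \equiv 0 \pmod d$ and $=1$ when $g \equiv h \not\equiv 0\pmod d$. Writing $\qbinom{n}{k} = \frac{[n]_q!}{[k]_q![n-k]_q!}$ and pairing up factors appropriately, the evaluation at $\zeta_d$ comes down to (i) checking no uncancelled factor $[dj]_q$ with $dj$ in range is left in a denominator without a numerator partner, which would force the value to be $0$, and (ii) tallying the surviving multiples of $d$, which assemble into $\lfloor n/d\rfloor!/(\lfloor k/d\rfloor!\,\lfloor (n-k)/d\rfloor!)$ times a ratio of the non-multiples that evaluates to $1$.

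For \textbf{item (1)}: the hypothesis $n \equiv k \pmod d$ forces $n - k \equiv 0 \pmod d$ as well, so among the three factorials $[n]_q!, [k]_q!, [n-k]_q!$ the counts of multiples of $d$ are $\lfloor n/d\rfloor$, $\lfloor k/d\rfloor$, $\lfloor (n-k)/d\rfloor$ respectively, and since $n \equiv k \pmod d$ these satisfy $\lfloor n/d\rfloor = \lfloor k/d\rfloor + \lfloor (n-k)/d\rfloor$ exactly (no carry). Grouping the multiples of $d$ via \eqref{eq:Ratioq} and noting $\lfloor (n-k)/d\rfloor = \lfloor n/d\rfloor - \lfloor k/d\rfloor$, the multiples-of-$d$ part contributes precisely $\binom{\lfloor n/d\rfloor}{\lfloor k/d\rfloor}$; the remaining factors (those not divisible by $d$) pair off into a product of ratios $[g]_q/[h]_q$ with $g\equiv h\not\equiv 0$, each evaluating to $1$, so nothing else survives. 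This gives $\qbinom{n}{k}_{q=\zeta_d} = \binom{\lfloor n/d\rfloor}{\lfloor k/d\rfloor}$ as claimed.

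For \textbf{item (2)}: here $k \equiv -1 \pmod d$ and $n \not\equiv -1 \pmod d$, so $n - k \equiv n + 1 \not\equiv 0 \pmod d$, meaning $[n-k]_q!$ contains $\lfloor (n-k)/d\rfloor$ multiples of $d$ while $[k]_q!$ contains $\lfloor k/d\rfloor$ and $[n]_q!$ contains $\lfloor n/d\rfloor$. The key arithmetic point is that when you add $k$ (ending in digit $d-1$ base $d$) and $n-k$, there is a carry out of the units place, so $\lfloor n/d\rfloor = \lfloor k/d\rfloor + \lfloor (n-k)/d\rfloor + 1$; equivalently the numerator $[n]_q!$ has strictly more multiples of $d$ than the denominator $[k]_q!\,[n-k]_q!$ does. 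Hence after maximal cancellation a factor $[dj]_q$ with $1 \le dj \le n$ remains in the numerator with no partner, but this does not immediately give $0$ — rather, I should instead observe the standard form $\qbinom{n}{k} = \qbinom{n}{n-k}$ and track it from the other side, or more cleanly: write $\qbinom{n}{k} = \frac{[n]_q[n-1]_q\cdots[k+1]_q}{[n-k]_q[n-k-1]_q\cdots[1]_q}$. The numerator runs over $k+1, k+2, \ldots, n$, which since $k \equiv -1$ includes the multiple of $d$ equal to $k+1$; among $k+1,\ldots,n$ the number of multiples of $d$ is $\lfloor n/d\rfloor - \lfloor k/d\rfloor$, whereas the denominator $1, \ldots, n-k$ has $\lfloor (n-k)/d\rfloor = \lfloor (n+1)/d\rfloor - [d \mid n+1] = \lfloor n/d \rfloor - \lfloor k/d\rfloor - 1$ of them (using $n\not\equiv -1$). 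So the numerator has exactly one more multiple of $d$ than the denominator; that uncancelled factor $[dj]_q \to [dj]_{\zeta_d} = 0$, and no denominator multiple of $d$ survives to cancel it, forcing $\qbinom{n}{k}_{q=\zeta_d} = 0$. (One must check that the uncancelled numerator multiple of $d$ is genuinely not killed by a pole in the rest — but all the non-multiples-of-$d$ ratios are finite and nonzero at $\zeta_d$, so the product is a finite nonzero number times $0$.)

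\textbf{Main obstacle.} The only delicate bookkeeping is the carry argument in item (2): one needs to be careful that "exactly one extra multiple of $d$ in the numerator" is correct and that no pole elsewhere cancels the resulting zero. This is handled by the factored form $\frac{[k+1]_q\cdots[n]_q}{[1]_q\cdots[n-k]_q}$, counting multiples of $d$ in each range explicitly, and noting that each non-multiple-of-$d$ factor $[i]_q$ is finite and nonzero at $q=\zeta_d$, so the vanishing of a single numerator factor is not compensated. Item (1) is essentially a clean instance of the multiples-of-$d$ count matching up with no carry, so I expect it to be routine once \eqref{eq:Ratioq} is in hand.
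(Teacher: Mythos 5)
Your proposal is correct and follows essentially the same route as the paper: evaluate via the ratio identity \eqref{eq:Ratioq}, pairing factors congruent mod $d$ so that the multiples of $d$ assemble into $\binom{\lfloor n/d\rfloor}{\lfloor k/d\rfloor}$ in item (1), and counting an excess vanishing factor $[dj]_q$ in the numerator for item (2) (the paper uses the form $\frac{[n]\cdots[n-k+1]}{[k]\cdots[1]}$, you the mirrored form $\frac{[k+1]\cdots[n]}{[1]\cdots[n-k]}$, which is an immaterial difference). Your write-up just carries out the bookkeeping the paper leaves implicit.
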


\begin{proof}
Each statement can be shown using \Cref{eq:Ratioq}. 
For instance, in Case (1), we have \[
\lim_{q \to \zeta_d} 
\Bigl[ \begin{smallmatrix}
\scriptstyle n \\
\scriptstyle k
\end{smallmatrix} \Bigr]_{q}
= \frac{(\lfloor \frac{n}{d} \rfloor) (\lfloor \frac{n}{d} \rfloor - 1) \cdots (\lfloor \frac{n}{d} \rfloor - \lfloor \frac{k}{d} \rfloor +1 ) }
{(\lfloor  \frac{k}{d}\rfloor) (\lfloor \frac{k}{d} \rfloor - 1) \cdots 1} 
= {\lfloor \frac{n}{d} \rfloor \choose \lfloor \frac{k}{d} \rfloor},
\]
immediately seeing the desired conclusion. Similarly, Case (2) can be shown by observing that there are more factors of the minimal polynomial of $\zeta_d$ in the numerator than in the denominator of the expression \[
\frac{[n][n-1] \cdots [n-k+1]}{[k][k-1] \cdots [1]}.
\]
\end{proof}

\begin{proposition}
\label{prop:CSPPuncturedPolygon}
Let $m$ and $\ell$ be two positive integers and set $n = m\ell$. Let $1 \leq s \leq \ell$, and let $d$ be a divisor of $n$. If $s$ and $\ell$ are not equivalent to the same nonzero number mod $d$, then $t_{n,s}^{(m)}(\zeta_d)$ is equal to the number of $(m+2)$-angulations in $\mathcal{T}_{n,s}^{\bullet,(m)}$ which are fixed by $\sigma^{\frac{n}{d}}$.
\end{proposition}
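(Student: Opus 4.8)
The plan is to evaluate $t_{n,s}^{(m)}(\zeta_d)$ by hand using \Cref{lem:q-Calculation-Dn} and then count the $\sigma^{n/d}$-fixed $(m+2)$-angulations directly, matching the two. Write $t_{n,s}^{(m)}(q) = m\qbinom{n+\ell-s-1}{n-1}$ (recall $\tfrac nm = \ell$), set $N = n+\ell-s-1$ and $K = n-1$, and note $N-K = \ell-s$ and, since $d\mid n$, $K\equiv -1\pmod d$. If $\ell\not\equiv s\pmod d$ then $N\not\equiv-1\pmod d$, so \Cref{lem:q-Calculation-Dn}(2) forces $t_{n,s}^{(m)}(\zeta_d)=0$. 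By the hypothesis, the only remaining possibility is $\ell\equiv s\equiv 0\pmod d$; then $N\equiv K\equiv-1\pmod d$, so \Cref{lem:q-Calculation-Dn}(1) gives $t_{n,s}^{(m)}(\zeta_d) = m\binom{\lfloor N/d\rfloor}{\lfloor K/d\rfloor} = m\binom{\frac{n+\ell-s}{d}-1}{\frac nd -1}$, evaluating the floors with $d\mid n$, $d\mid\ell$, $d\mid s$.

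\emph{The vanishing case.} Suppose $\ell\not\equiv s\pmod d$, and let $T$ be a $\sigma^{n/d}$-fixed $(m+2)$-angulation of $P_n^\bullet$ with $s$ spokes, if one exists. No spoke is fixed by a nontrivial power of $\sigma^{n/d}$ (that would force $v_i = v_{i-jn/d}$), so the spokes split into orbits of size $d$ and $d\mid s$. Similarly, a face of $T$ not incident to the puncture is a simply connected region avoiding the puncture, hence not invariant under any nontrivial rotation about the center; so those faces (there are $\ell-s$ of them) also split into size-$d$ orbits, giving $d\mid\ell$. Then $\ell\equiv s\equiv 0\pmod d$, a contradiction. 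So no such $T$ exists and both sides of the asserted identity are $0$.

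\emph{The main case.} Assume $\ell\equiv s\equiv 0\pmod d$ and set $n' = n/d = m\ell'$, $\ell' = \ell/d$, $s' = s/d$, so $1\le s'\le\ell'$. Let $F$ be the set of $\sigma^{n/d}$-fixed members of $\mathcal{T}_{n,s}^{\bullet,(m)}$ and $F_0\subseteq F$ those containing the spoke $(v_0,\bullet)$. If $T\in F_0$, the whole $\sigma^{n/d}$-orbit of $(v_0,\bullet)$, namely the $d$ spokes $(v_{jn'},\bullet)$, lies in $T$ and cuts $P_n^\bullet$ into $d$ congruent sectors; restricting $T$ to the sector through $v_1,\dots,v_{n'-1}$ gives an $(m+2)$-angulation of an $(n'+2)$-gon (cyclic vertex order $v_0,v_1,\dots,v_{n'},\bullet$) whose diagonals at $\bullet$ are exactly the sector-representatives of the $s'-1$ remaining spoke-orbits. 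This restriction is a bijection from $F_0$ onto the $(m+2)$-angulations of that $(n'+2)$-gon using exactly $s'-1$ diagonals at $\bullet$, with inverse ``rotate the sector $d$ times'' (the copies glue to a genuine angulation since each gluing locus is a spoke, hence an edge of faces on both sides). Cut open, this count is precisely the one in the proof of \Cref{thm:CountMAngulation} for $(m+2)$-angulations of $P_{n'}^\bullet$ with $s'$ spokes one of which is prescribed, namely $[x^{\ell'-s'}]\,c^{(m)}(x)^{ms'} = \tfrac{ms'}{n'+\ell'-s'}\binom{n'+\ell'-s'}{n'}$, via \Cref{lem:CountSkippingOneVertex} and the Fuss–Catalan coefficient formula. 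Finally $\sigma$ permutes $F$ (it commutes with $\sigma^{n/d}$ and preserves spoke count), and each $T\in F$ has exactly $s$ rotations landing in $F_0$ (one per spoke), so double counting yields $|F| = \tfrac ns|F_0| = \tfrac{n'}{s'}\cdot\tfrac{ms'}{n'+\ell'-s'}\binom{n'+\ell'-s'}{n'} = m\binom{n'+\ell'-s'-1}{n'-1} = t_{n,s}^{(m)}(\zeta_d)$, as needed. (Equivalently, the number of $d$-fold symmetric such angulations of $P_n^\bullet$ equals $t_{n/d,\,s/d}^{(m)}$.)

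\emph{Main obstacle.} The crux is the sector bijection: verifying that restricting a $d$-symmetric angulation to a fundamental sector really produces an $(m+2)$-angulation of an $(n'+2)$-gon and that re-assembling $d$ rotated copies is well-defined and inverse to it, together with the small topological input that a face avoiding the puncture cannot be rotation-invariant — this last point is exactly what empties $F$ in the vanishing case. The root-of-unity evaluation and the remaining arithmetic are routine given \Cref{lem:q-Calculation-Dn}, \Cref{lem:CountSkippingOneVertex}, and \Cref{thm:CountMAngulation}.
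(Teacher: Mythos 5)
Your proposal is correct and follows essentially the same route as the paper: evaluate $t_{n,s}^{(m)}(\zeta_d)$ with \Cref{lem:q-Calculation-Dn}, rule out fixed points unless $d\mid s$ and $d\mid\ell$, and identify the $\sigma^{n/d}$-fixed $(m+2)$-angulations with the count $t_{n/d,s/d}^{(m)}$ of \Cref{thm:CountMAngulation} via a fundamental-sector bijection. The only cosmetic difference is that you keep the sector as an $(n'+2)$-gon with a marked spoke and re-run the $\tfrac{n}{s}$ orbit count from the proof of \Cref{thm:CountMAngulation}, whereas the paper identifies the two boundary spokes and cites the statement of that theorem directly for $\mathcal{T}_{n/d,s/d}^{\bullet,(m)}$.
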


\begin{proof}
Let $d \vert n$. Let $T$ be an $(m+2)$-angulation in $\mathcal{T}_{n,s}^{\bullet,(m)}$ which is fixed by $\sigma^{\frac{n}{d}}$. In order for $T$ to exist, we need $d \vert s$ and $d \vert \ell$. The latter condition is necessary when we remember from \Cref{lem:WhenCanWemangulate} that $\ell$ counts the number of $(m+2)$-gons cut out by any element of $\mathcal{T}_{n,s}^{\bullet,(m)}$.

Since $T$ is fixed by $\sigma^{\frac{n}{d}}$, it is completely determined by any $2\pi/d$ sector, such as $v_0,v_1,\ldots,v_{\frac{n}{d}-1}$. If we cut out the sector formed by $v_0,\ldots,v_{\frac{n}{d}-1}, v_{\frac{n}{d}}$ and identifying $v_0$ and $v_{\frac{n}{d}}$, we have an element of $\mathcal{T}_{\frac{n}{d},\frac{s}{d}}^{\bullet,(m)}$. This process in fact gives a bijection between the elements of $\mathcal{T}_{n,s}^{\bullet,(m)}$ fixed by $\sigma^{\frac{n}{d}}$ and $\mathcal{T}_{\frac{n}{d},\frac{s}{d}}^{\bullet,(m)}$. Therefore, the number of fixed points of $\sigma^{\frac{n}{d}}$ in $\mathcal{T}_{n,s}^{\bullet,(m)}$ is exactly $t_{\frac{n}{d},\frac{s}{d}}^{(m)} = m{\frac{n}{d} + \frac{\ell}{d} - \frac{s}{d}-1 \choose \frac{n}{d}-1}$.

Now, we turn to evaluating $t_{n,s}^{(m)}(q)$ at $q=\zeta_n^{\frac{n}{d}}$, or equivalently, $ \zeta_d$. First, suppose we have $d \vert s$ and $d \vert \ell$, so that there are $d$-fixed points in $\mathcal{T}_{n,s}^{\bullet,(m)}$. Then, since $n+\ell -s - 1 \equiv n-1 \pmod{d}$, we can use \Cref{lem:q-Calculation-Dn} to show \[
t_{n,s}^{(m)}(\zeta_d) = \qbinomSpecial{n + \ell - s - 1}{n-1}{q = \zeta_d} = {\lfloor \frac{n+\ell - s- 1}{d}\rfloor  \choose \lfloor\frac{n-1}{d} \rfloor}=  {\frac{n}{d} + \frac{\ell}{d} - \frac{s}{d} - 1 \choose \frac{n}{d}-1},
\]
as desired.

Suppose now that at least one of $\ell$ or $s$ is not equivalent to 0 modulo $d$. Moreover, assume $\ell \not\equiv s \pmod{d}$, which implies $n+\ell-s - 1 \not\equiv -1 \pmod{d}$. Since $n-1 \equiv -1 \pmod{d}$, by \Cref{lem:q-Calculation-Dn}, $t_{n,s}^{(m)}(\zeta_d) = 0$.
\end{proof}

Let $t_n(q) = \sum_{s=1}^n t_{n,s}^{(1)}(q)$.  
By  \Cref{cor:CountAllMangulationPunctured},  $t_n(q)$ is a $q$-analogue of ${2n-1\choose n}$.  
Let $\mathcal{T}_n^\bullet$ denote the set of all triangulations of $P_n^\bullet$.  
Note that if $m = 1$,  the $\ell$ in the statement of \Cref{prop:CSPPuncturedPolygon} is equal to $n$ and thus will never be equivalent to a nonzero number modulo $d$, a divisor of $n$. 
Thus, the following is immediate.
 
\begin{proposition}
\label{prop:CSPPuncturedPolygonTriangulationsOnly}
The triple $(\mathcal{T}_n^\bullet, G_n,t_n(q))$ exhibits the cyclic sieving phenomenon.
\end{proposition}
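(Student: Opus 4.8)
The plan is to deduce the result directly from \Cref{prop:CSPPuncturedPolygon} together with an alternate formulation of the cyclic sieving phenomenon analogous to \Cref{prop:CSPVersion2}. Recall the standard fact (see \cite{sagan2010cyclic}) that a triple $(X, G_n, X(q))$ with $G_n = \langle \sigma \rangle$ exhibits the cyclic sieving phenomenon if and only if, for every $d \mid n$, the number of elements of $X$ fixed by $\sigma^{n/d}$ equals $X(\zeta_d)$, where we range over one representative $\zeta_d$ of the primitive $d$-th roots of unity for each $d$. (That this suffices uses that $X(q) \in \mathbb{Z}[q]$, so $X(\zeta)$ depends only on the order of $\zeta$.) Thus it is enough to check the fixed-point count against $t_n(\zeta_d)$ for each divisor $d$ of $n$.

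First I would observe that $\mathcal{T}_n^\bullet = \bigsqcup_{s=1}^{n} \mathcal{T}_{n,s}^{\bullet,(1)}$, and that this decomposition is $G_n$-stable since rotation preserves the number of spokes of a triangulation. Consequently, for any $d \mid n$, the number of elements of $\mathcal{T}_n^\bullet$ fixed by $\sigma^{n/d}$ is the sum over $s$ of the number of elements of $\mathcal{T}_{n,s}^{\bullet,(1)}$ fixed by $\sigma^{n/d}$. The key point is that in the $m=1$ case, the parameter $\ell$ of \Cref{prop:CSPPuncturedPolygon} equals $n$, and since $d \mid n$ we have $\ell \equiv 0 \pmod d$; hence for any $s$ with $1 \le s \le n$, the hypothesis ``$s$ and $\ell$ are not equivalent to the same nonzero number mod $d$'' is automatically satisfied (either $s \equiv 0$, in which case $s \equiv \ell \equiv 0$ and $0$ is not nonzero, or $s \not\equiv 0$, in which case $s \not\equiv \ell$). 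So \Cref{prop:CSPPuncturedPolygon} applies for every $s$, giving that the number of $\sigma^{n/d}$-fixed points in $\mathcal{T}_{n,s}^{\bullet,(1)}$ equals $t_{n,s}^{(1)}(\zeta_d)$.

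Summing over $s$, the number of $\sigma^{n/d}$-fixed points in $\mathcal{T}_n^\bullet$ is $\sum_{s=1}^n t_{n,s}^{(1)}(\zeta_d) = t_n(\zeta_d)$, where the last equality is just the definition $t_n(q) = \sum_{s=1}^n t_{n,s}^{(1)}(q)$ evaluated at $q = \zeta_d$. Since this holds for all divisors $d$ of $n$, and $t_n(q) \in \mathbb{Z}[q]$ by \Cref{cor:CountAllMangulationPunctured}, the alternate criterion is met and $(\mathcal{T}_n^\bullet, G_n, t_n(q))$ exhibits the cyclic sieving phenomenon.

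I do not expect a genuine obstacle here, as all the real content has already been established in \Cref{prop:CSPPuncturedPolygon}; the only thing requiring a small amount of care is the verification that the congruence hypothesis of that proposition is vacuous when $m=1$ (which the paragraph preceding the statement already notes), and the bookkeeping that the disjoint-union decomposition by spoke number is respected both by the group action and by the polynomial. If one wanted to be fully self-contained one would also recall why checking fixed-point counts at one primitive root of each order $d \mid n$ suffices for CSP, but this is entirely standard.
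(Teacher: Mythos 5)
Your proposal is correct and follows essentially the same route as the paper: the paper also observes that for $m=1$ one has $\ell = n \equiv 0 \pmod d$ for every divisor $d$ of $n$, so the congruence hypothesis of \Cref{prop:CSPPuncturedPolygon} holds for every spoke number $s$, and the result follows by summing over the rotation-stable decomposition by number of spokes. Your write-up merely makes explicit the standard reduction to checking one primitive $d$-th root of unity per divisor $d$, which the paper leaves implicit.
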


\begin{remark}
\Cref{prop:CSPPuncturedPolygonTriangulationsOnly} bears similarity to the work of Eu and Fu \cite{eu2008cyclic} who exhibited cyclic sieving phenomena for generalized cluster complexes. This is similar because triangulations of $P_n^\bullet$ model clusters in a type $D$ cluster algebra and hence these will correspond to facets of the (ordinary) cluster complex. We note that Eu and Fu use instead a model given in \cite{CAII} which uses colored diagonals on an ordinary polygon. Our result is not a special case of that in \cite{eu2008cyclic} as the cyclic actions differ. In particular, the action in Eu and Fu's work, which resembles Auslander-Reiten translation, has order $2n$ when $n$ is odd whereas our action is always order $n$. 
\end{remark}

When $s \equiv \ell \pmod{d}$ and these are not divisible by $d$, then $t_{n,s}^{(m)}(\zeta_d) \neq 0$ even though there are no triangulations fixed by $\sigma^{\frac{n}{d}}$ in $\mathcal{T}_{n,s}^{\bullet,(m)}$. 
It is not clear what $t_{n,s}^{(m)}(\zeta_d)$ counts in this case. 
For example, given if $n = 12$ and $m = 3$, implying $\ell = 4$, and $s = 1$, then we have 
$
t_{12,1}^{(3)}(\zeta_3) 
= 3
\mathop{\Bigl[
\begin{smallmatrix}
\scriptstyle 14 \\
\scriptstyle 11
\end{smallmatrix}
\Bigr]
}_{q=\zeta_3}
= 3 \binom{4}{3} = 12
$. 
Meanwhile, $\mathcal{T}_{12,1}^{\bullet,(3)}$ consists of $5$-angulations of $P_{12}^\bullet$ with one spoke, and such $5$-angulations never have nontrivial rotational symmetry.
Therefore, for general $m$, our analogue of \Cref{prop:CSPPuncturedPolygon} must avoid such behavior.

\begin{proposition}
\label{prop:CSPPuncturedPolygonm-angulation}
Let $n,m,$ and $s$ be three positive integers such that $m$ divides $n$ and for all divisors $d$ of $n$, if $s \equiv \frac{n}{m} \pmod{d}$, then $s \equiv 0 \pmod{d}$. The triple $(\mathcal{T}_{n,s}^{\bullet,(m)}, G_n, t_{n,s}^{(m)})$ exhibits the cyclic sieving phenomenon.  
\end{proposition}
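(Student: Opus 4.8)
The plan is to verify the two conditions in Definition~\ref{def:CSP}: that $t_{n,s}^{(m)}(q)$ evaluated at each $n$-th root of unity $\zeta_n^k$ counts the fixed points of $\sigma^k$. Since the only input available is Proposition~\ref{prop:CSPPuncturedPolygon}, the first step is to reconcile the hypothesis ``$s$ and $\ell$ are not equivalent to the same nonzero number mod $d$'' appearing there with the hypothesis of the present statement, namely ``for all divisors $d$ of $n$, if $s \equiv \frac{n}{m} \pmod d$ then $s \equiv 0 \pmod d$.'' Writing $\ell = \frac{n}{m}$, the present hypothesis says exactly that whenever $s \equiv \ell \pmod d$ we must have $s \equiv 0 \pmod d$ (and hence also $\ell \equiv 0 \pmod d$); equivalently, $s$ and $\ell$ are never congruent to the same \emph{nonzero} residue mod $d$. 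So the hypothesis here is precisely the hypothesis of Proposition~\ref{prop:CSPPuncturedPolygon} imposed simultaneously for \emph{every} divisor $d$ of $n$. I would spell this equivalence out as the opening paragraph of the proof.

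Next I would handle an arbitrary power $\sigma^k$ with $0 \le k \le n-1$. Set $d = n/\gcd(n,k)$, so that $\sigma^k$ generates the same cyclic subgroup as $\sigma^{n/d}$; in particular a triangulation is fixed by $\sigma^k$ if and only if it is fixed by $\sigma^{n/d}$, so the fixed-point counts agree. Likewise, $\zeta_n^k$ is a primitive $d$-th root of unity, hence $t_{n,s}^{(m)}(\zeta_n^k) = t_{n,s}^{(m)}(\zeta_d)$ because $t_{n,s}^{(m)}(q)$ has integer (indeed rational) coefficients and its value at a root of unity depends only on the order of that root — or more simply, both sides are computed by the same Lemma~\ref{lem:q-Calculation-Dn} limit, which depends only on $d$. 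Since $d = n/\gcd(n,k)$ is a divisor of $n$, the present hypothesis guarantees that $s$ and $\ell$ are not congruent to a common nonzero residue mod $d$, so Proposition~\ref{prop:CSPPuncturedPolygon} applies to this $d$ and yields
\[
t_{n,s}^{(m)}(\zeta_n^k) \;=\; t_{n,s}^{(m)}(\zeta_d) \;=\; \#\{\,T \in \mathcal{T}_{n,s}^{\bullet,(m)} : \sigma^{n/d}(T) = T\,\} \;=\; \#\{\,T \in \mathcal{T}_{n,s}^{\bullet,(m)} : \sigma^{k}(T) = T\,\}.
\]
Finally, the case $k = n$ (equivalently $d = 1$) is the statement $t_{n,s}^{(m)}(1) = |\mathcal{T}_{n,s}^{\bullet,(m)}|$, which is exactly Theorem~\ref{thm:CountMAngulation} together with the definition of $t_{n,s}^{(m)}$. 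Since $\sigma^k$ for $k$ ranging over $\{1,\dots,n\}$ (with $\sigma^n = \mathrm{id}$) covers all elements of $G_n$, and every element of $G_n$ arises this way, this establishes the cyclic sieving phenomenon.

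The main obstacle I anticipate is not deep: it is the careful bookkeeping in the reduction $\sigma^k \leadsto \sigma^{n/d}$ and the parallel reduction $\zeta_n^k \leadsto \zeta_d$, making sure that the divisor $d$ produced is genuinely a divisor of $n$ (it is, being $n/\gcd(n,k)$) so that the hypothesis of the proposition is triggered, and checking that the ``same nonzero residue'' phrasing in Proposition~\ref{prop:CSPPuncturedPolygon} is logically identical to the ``$s \equiv \ell \pmod d \Rightarrow s \equiv 0 \pmod d$'' phrasing here. One should also double-check the edge case where no triangulation is fixed by $\sigma^{n/d}$ for some $d$ but $t_{n,s}^{(m)}(\zeta_d) = 0$ as well — this is precisely the content of the vanishing half of Proposition~\ref{prop:CSPPuncturedPolygon}, so it is already covered, but it is worth a sentence to note that the hypothesis was engineered exactly to exclude the bad behavior illustrated by the $n=12$, $m=3$, $s=1$ example preceding the statement.
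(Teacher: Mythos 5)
Your proposal is correct and matches the paper's intent: the paper states \Cref{prop:CSPPuncturedPolygonm-angulation} as an immediate consequence of \Cref{prop:CSPPuncturedPolygon}, since the hypothesis is exactly the condition of that proposition imposed for every divisor $d$ of $n$. Your write-up simply makes explicit the standard reduction $\sigma^k \leadsto \sigma^{n/\gcd(n,k)}$ and $\zeta_n^k \leadsto \zeta_d$, together with the $d=1$ evaluation via \Cref{thm:CountMAngulation}, which is precisely the omitted bookkeeping.
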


An example of a triple of positive integers satisfying the conditions of \Cref{prop:CSPPuncturedPolygonm-angulation} is $n = 12, m = 3, s = 3$.

\begin{quest}
Given $n = m\ell$, $d$ a divisor of $n$, and $1 \leq s \leq \ell$ such that $s \equiv \ell \not\equiv 0 \pmod{d}$, is there a natural subset of $\mathcal{T}_{n,s}^{\bullet,(m)}$ with cardinality $t_{n,s}^{\bullet,(m)}(\zeta_d)$?
\end{quest}

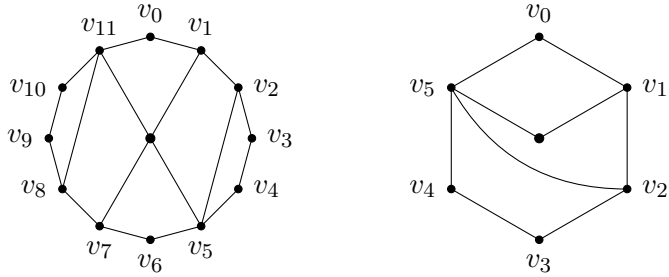
\begin{figure}
\centering
\begin{tabular}{cc}
\begin{tikzpicture}[scale=0.5]
    \newdimen\R
    \R=2.7cm
    \draw (0:\R) \foreach \x in {30,60,...,360} {  -- (\x:\R) };   
    \foreach \x/\l/\p in
     {0/{\small $v_3$}/right,
      30/{\small $v_2$}/right,
      60/{\small $v_1$}/above,
      90/{\small $v_0$}/above,
      120/{\small $v_{11}$}/above,
      150/{\small $v_{10}$}/left,
      180/{\small $v_9$}/left,
     210/{\small $v_8$}/left,
      240/{\small $v_7$}/below,
      270/{\small $v_6$}/below,
      300/{\small $v_5$}/below,
      330/{\small $v_4$}/right}
    \node[inner sep=1pt,circle,draw,fill,label={\p:\l}] at (\x:\R) {};
    \draw[fill=black] (0,0) circle (.12cm);
    \draw(60:\R) -- (0,0);
    \draw(60+180:\R) -- (0,0);
    \draw(300:\R) -- (0,0);
    \draw(120:\R)--(0,0);
    \draw(30:\R) -- (300:\R);
    \draw(210:\R) -- (120:\R);
\end{tikzpicture}&
\begin{tikzpicture}[scale=0.5]
    \newdimen\R
    \R=2.7cm
    \draw (30:\R) -- (90:\R) -- (150:\R) -- (210:\R) -- (270:\R) -- (330:\R) -- (30:\R);
    \foreach \x/\l/\p in
     {30/{\small $v_1$}/right,
      90/{\small $v_0$}/above,
      150/{\small $v_5$}/left,
     210/{\small $v_4$}/left,
      270/{\small $v_3$}/below,
      330/{\small $v_2$}/right}
    \node[inner sep=1pt,circle,draw,fill,label={\p:\l}] at (\x:\R) {};
    \draw[fill=black] (0,0) circle (.12cm);
    \draw(30:\R) -- (0,0);
    \draw(150:\R) -- (0,0);
    \draw(.867*2.7cm,-.5*2.7cm) to [out=180, in = 300] (-.867*2.7cm,.5*2.7cm);
\end{tikzpicture}\\
\end{tabular}
\caption{Associating a smaller $4$-angulation to a $4$-angulation with 2-fold symmetry, as in the proof of \Cref{prop:CSPPuncturedPolygon}.}\label{fig:SymmetricMangulation}
\end{figure}

\section{Frieze Patterns}
\label{sec: frieze patterns}

We now seek to apply our cyclic sieving results to frieze patterns,  combinatorial objects with connections to cluster theory,  representation theory,  and geometry. Several lovely surveys about frieze patterns have been recently written, such as \cite{BaurSurvey,morier2015coxeter,PresslandSurvey}.

\begin{definition}
\label{def:FriezePattern}
A \emph{frieze pattern of width $n$} over an integral domain $R$ is a function $F: \{(i,j) \in \mathbb{Z} \times \mathbb{Z}: 0 \leq i-j \leq n\} \to R$ satisfying
\begin{enumerate}
    \item $F(i,i) = F(i,n+i) = 0$ and $F(i,i+1) = F(i,n+i-1) = 1$ for all $i\in \mathbb{Z}$, and
    \item for all $i \leq j$, $F(i-1,j)F(i,j+1) - F(i,j) F(i+1,j+1)= 1$.
\end{enumerate}

An \emph{infinite frieze pattern} over a ring $R$ is a function $F: \{(i,j) \in \mathbb{Z} \times \mathbb{Z} : 0 \leq i-j\} \to R$ satisfying
\begin{enumerate}
    \item[(1')] $F(i,i) = 0$ and $F(i,i+1) = 1$ for all $i\in \mathbb{Z}$, and
    \item[(2)] for all $i \leq j$, $F(i-1,j)F(i,j+1) - F(i,j) F(i+1,j+1)= 1$.
\end{enumerate} 
\end{definition}

We remark that width $n$ frieze patterns here would often be referred to as having width $n-3$ in other articles.

The values of the frieze pattern are usually depicted as an array with every other row shifted as below, and we will conflate the data of $F$ with such arrays.
When drawn this way, condition in (2) becomes the rule that, in every diamond, the product of the horizontal entries is one more than the product of the vertical (most simply,  East $\times$ West - North $\times$ South = 1). 
Note that in this depiction, the rows are of the form $\big(F(i,i+k)\big)_{i \in \mathbb{Z}}$ for a fixed value $k$.

\begin{center}
\scalebox{0.9}{
$\begin{array}{ccccccccccccccccccccccccc}
 \ldots&0&&0&&0&&0&&0&&\ldots\\
 \ldots& &1&&1&&1&&1&&&\ldots\\
 \ldots&F(-1,1)&&F(0,2) &&F(1,3)&& F(2,4)&&F(3,5)&&\ldots\\
  \ldots&&F(-1,2)&&F(0,3)&&F(1,4)&&F(2,5)&&&\ldots\\
 \ldots  & F(-2,2)&&F(-1,3)&&F(0,4)&&F(1,5)&&F(2,6)&&\ldots\\
 \ldots &&F(-2,3)&&F(-1,4)&&F(0,5)&&F(1,6)&&&\ldots\\
 &&&\iddots&&\vdots&&\ddots&&&&&&
 \end{array}$}
 \end{center}

For explicit examples of a frieze pattern,  see  \Cref{ex:BijectionsToFPs} and \Cref{ex:InfiniteFriezePatterns}.

We introduce more terminology associated to frieze patterns. These apply to both finite frieze patterns (i.e. frieze patterns with a positive integral width) and infinite frieze patterns. In this article, all infinite frieze patterns will be assumed to be \emph{periodic}, i.e. each row will be periodic. 

\begin{definition}\label{def:BasicFriezeTerms}
Let $F$ be a frieze pattern over a ring $R$.
\begin{enumerate}
\item The row $\big(F(i,i+2)\big)$ is called the \emph{quiddity row}. 
\item If all $3 \times 3$ diamonds in $F$ form a matrix with determinant 0, we say $F$ is \emph{tame}.
\end{enumerate}
\end{definition}
One can check that all examples of frieze patterns provided here are tame. 

The unimodular rule (i.e. part (2) of \Cref{def:FriezePattern}) implies that a frieze pattern is determined by its quiddity row. Indeed, the relationship between any value of a frieze pattern and the quiddity row can be made explicit. Define a family of multivariate polynomials $K_n(x_1,\ldots,x_n)$ by $K_0 = 1$ and $K_n(x_1,\ldots,x_n) = x_n K_{n-1}(x_1,\ldots,x_{n-1}) + K_{n-2}(x_1,\ldots,x_{n-2})$. The polynomials $K_n(\mathbf{x})$ are known as \emph{continuants} and can also be defined as a determinant of a tridiagonal matrix. THe following comes from  \cite[Question 18]{conway1973triangulated,conway1973answers}.

\begin{lemma}\label{lem:Kontinuant}
Given a frieze pattern $F$ and integers $i < j$,  \[
F(i,j) = K_{j-i-1}(F(i,i+2),F(i+1,i+3),\ldots,F(j-2,j)).
\]
\end{lemma}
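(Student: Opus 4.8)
The plan is to prove the identity by induction on $j - i$, exploiting the fact that the continuant polynomials satisfy the same three-term recurrence that governs the columns of a frieze pattern. First I would record the base cases. For $j = i+1$, the claimed identity reads $F(i,i+1) = K_0 = 1$, which holds by condition (1) of \Cref{def:FriezePattern}. For $j = i+2$, it reads $F(i,i+2) = K_1(F(i,i+2)) = F(i,i+2)$, trivially true. These anchor the induction and also match the initial data $K_0 = 1$, $K_1(x_1) = x_1$.

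For the inductive step, fix $i$ and suppose the formula holds for all pairs with difference strictly less than $j - i$, where $j - i \geq 3$. The key observation is that, reading down the $i$-th (say, left) edge of the relevant region, the unimodular rule lets us solve for $F(i,j)$ in terms of entries closer to the quiddity row. Concretely, applying part (2) of \Cref{def:FriezePattern} in the appropriate diamond gives a relation of the shape
\[
F(i,j) = F(j-2,j)\, F(i,j-1) - F(i,j-2),
\]
i.e. $F(i,j) = x\, F(i,j-1) + F(i,j-2)$ with the convention absorbed, where $x = F(j-2,j)$ is the quiddity entry at position $j-2$ and the sign works out because of the alternating-shift convention in the array (one should double-check the sign by testing on a small frieze, e.g. width $4$ or $5$). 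Granting this, I apply the inductive hypothesis to $F(i,j-1)$ and $F(i,j-2)$, which expresses both as continuants in the quiddity entries $F(i,i+2),\ldots,F(j-3,j-1)$ and $F(i,i+2),\ldots,F(j-4,j-2)$ respectively. Plugging these into the displayed relation and invoking the defining recurrence $K_{n}(x_1,\ldots,x_n) = x_n K_{n-1}(x_1,\ldots,x_{n-1}) + K_{n-2}(x_1,\ldots,x_{n-2})$ with $n = j - i - 1$ and $x_n = F(j-2,j)$ yields exactly $F(i,j) = K_{j-i-1}(F(i,i+2),\ldots,F(j-2,j))$, completing the induction.

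The main obstacle I anticipate is establishing the three-term recurrence $F(i,j) = F(j-2,j)F(i,j-1) - F(i,j-2)$ cleanly from the diamond rule, keeping careful track of which entries are ``North/South'' versus ``East/West'' in the shifted-array depiction and getting the signs right; this is where the geometry of the array (and the boundary rows of $0$'s and $1$'s) really enters. One clean way to do this is to first prove the symmetric companion identity (a recurrence along the $j$-direction, $F(i,j) = F(i,i+2)F(i+1,j) - F(i+2,j)$) and note that iterating either one reconstructs the whole frieze from the quiddity row; alternatively, one can set up a $2\times 2$ matrix product $\prod_{t} \left( \begin{smallmatrix} x_t & 1 \\ 1 & 0 \end{smallmatrix} \right)$ whose entries are continuants and check directly that the frieze entries $F(i,j)$ populate such a product, which makes both the recurrence and the continuant formula fall out simultaneously. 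Either route is routine once the bookkeeping is fixed; the cited references \cite{conway1973triangulated,conway1973answers} contain the classical version, so I would lean on that and just verify the index conventions match ours (recall our width-$n$ convention differs by $3$ from the classical one, as noted after \Cref{def:FriezePattern}).
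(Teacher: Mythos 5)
The paper offers no proof of this lemma at all (it is quoted from \cite[Question 18]{conway1973triangulated,conway1973answers}), so the only question is whether your argument stands on its own; your overall strategy (induction on $j-i$ via a three-term recurrence along diagonals, equivalently a $2\times 2$ matrix product of continuant type) is indeed the classical route. However, as written there is a genuine unresolved problem, and it is exactly the sign you flagged but did not settle. The recurrence you display, $F(i,j) = F(j-2,j)F(i,j-1) - F(i,j-2)$, is the correct one, but your restatement ``i.e.\ $F(i,j) = x\,F(i,j-1) + F(i,j-2)$ with the convention absorbed'' is not a convention issue --- it is false, and the plus sign cannot be recovered by re-indexing. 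Consequently the induction does not close against the recurrence $K_n = x_nK_{n-1} + K_{n-2}$ stated before the lemma: already at $j-i=3$ one has $F(i,i+3) = F(i,i+2)F(i+1,i+3) - 1$ (in the first frieze of \Cref{ex:BijectionsToFPs}, the entry sitting below the adjacent quiddity entries $3$ and $2$ is $5 = 3\cdot 2 - 1$), whereas $K_2(x_1,x_2) = x_1x_2+1$. So a correct write-up must commit to the signed continuants $K_n = x_nK_{n-1} - K_{n-2}$ (the tridiagonal determinant with $1$'s on both off-diagonals, i.e.\ the $(1,1)$-entry of $\prod_t\bigl(\begin{smallmatrix} x_t & -1 \\ 1 & 0\end{smallmatrix}\bigr)$, not of $\prod_t\bigl(\begin{smallmatrix} x_t & 1 \\ 1 & 0\end{smallmatrix}\bigr)$ as you propose) and note that the recurrence quoted in the paper carries a sign slip; with that fixed, your induction goes through verbatim.

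The second gap is that the key recurrence is not actually derived: a single application of part (2) of \Cref{def:FriezePattern} relates the four entries $F(i,j)$, $F(i-1,j)$, $F(i,j+1)$, $F(i-1,j+1)$ --- a $2\times 2$ determinant condition linking two adjacent diagonals --- and no single diamond involves the three entries $F(i,j-2),F(i,j-1),F(i,j)$ of one diagonal together with the quiddity entry $F(j-2,j)$ (except in the boundary case $i=j-2$). To get the three-term recurrence one must combine two adjacent diamonds to show that consecutive column vectors $\bigl(F(i,b),F(i-1,b)\bigr)$ satisfy a common two-step linear recurrence with second coefficient $-1$, identify the remaining coefficient as $F(j-2,j)$ by evaluating on the diagonal $i=j-2$ (where $F(j-2,j-2)=0$, $F(j-2,j-1)=1$), and transport it across diagonals; this is precisely the ``companion identity / matrix product'' step you list as the main obstacle and then defer to the references as routine bookkeeping. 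Since that step plus the sign is essentially the whole content of the lemma, the proposal as it stands is an outline of the standard proof rather than a proof.
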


\subsection{Frieze Patterns from Dissections}
Frieze patterns were introduced by Coxeter in \cite{Coxeter}.  
Shortly after their first appearance, Conway and Coxeter characterized finite frieze patterns over $\mathbb{Z}$ with positive entries, i.e. positive integral frieze patterns \cite{conway1973triangulated,conway1973answers}.  
Holm and J{\o}rgensen generalized this result to $(m+2)$-angulations \cite{holm2020p}. 
We first define frieze patterns associated to any dissection of a polygon and then refine to these cases.  
Given $p$ a positive integer, set $\lambda_{p}:= 2\cos(\pi/p)$. Let $\overline{i}$ denote $i$ modulo $n$. 

\begin{definition}
\label{def:FriezePatternsFromDissections}
Let $T$ be a dissection of $P_{n+3}$.  
For each vertex $v_j$ and all $p \geq 3$, let $\mu_p(j)$ denote the number of $p$-gons of $T$ incident to $v_j$.  
We define $F_T$, the frieze pattern associated to $T$, to be the frieze pattern with quiddity row given by setting $F(i-1,i+1) = \sum_{p \geq 3} \mu_p(\overline{i}) \lambda_p$.
\end{definition}

See \Cref{ex:BijectionsToFPs} for two frieze patterns associated to dissections. 
This construction is an extension of the classical bijection between positive integral frieze patterns and triangulations of polygons. 

\begin{theorem}[\cite{conway1973triangulated,conway1973answers}]
\label{thm:CCFrieze}
Every positive integral frieze patterns of width $n$ is uniquely associated to a triangulation of $P_{n}$.  
\end{theorem}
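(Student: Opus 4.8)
The plan is to prove the Conway--Coxeter theorem (\Cref{thm:CCFrieze}) by exhibiting both directions of the correspondence. In one direction, given a triangulation $T$ of $P_n$, \Cref{def:FriezePatternsFromDissections} already produces a candidate frieze pattern $F_T$: since $T$ is a triangulation, every subgon is a triangle, so $\lambda_3 = 2\cos(\pi/3) = 1$ and the quiddity entry $F(i-1,i+1) = \sum_{p\geq 3}\mu_p(\overline{i})\lambda_p$ simply counts the number of triangles of $T$ incident to the vertex $v_{\overline i}$. The substance of this direction is to check that the infinite array generated from this quiddity row via the unimodular rule (equivalently, via the continuants of \Cref{lem:Kontinuant}) actually closes up with the correct boundary rows of $0$'s and $1$'s as in \Cref{def:FriezePattern}, and that all entries are positive integers. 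The standard way I would do this is by induction on $n$: remove an ear of the triangulation (a vertex $v_j$ incident to exactly one triangle, which exists by a counting/leaf argument on the dual tree of $T$), observe that this corresponds to the classical ``reduction'' on the quiddity row $(\ldots,a,1,b,\ldots)\mapsto(\ldots,a-1,b-1,\ldots)$, and invoke the inductive hypothesis that the smaller quiddity row yields a genuine frieze pattern of width $n-1$; a local diamond computation then shows the larger array is again a frieze pattern, with positivity preserved because the removed entry equals $1$ and the glueing only increases neighboring entries.

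For the reverse direction, I would start from an arbitrary positive integral frieze pattern $F$ of width $n$ and recover a triangulation. The key observation is that the quiddity row $(c_i) = (F(i-1,i+1))$ consists of positive integers, and the closing-up condition $F(i,n+i)=0$, $F(i,n+i-1)=1$ forces, via the continuant identities, that the $c_i$ cannot all be $\geq 2$: some $c_j = 1$. (Concretely, one shows $\sum c_i = 3(n-2) - (\text{something nonnegative})$, or more cleanly that if all $c_i\geq 2$ the frieze entries grow without bound and never return to $0$, contradicting periodicity/closure.) Having found $c_j = 1$, the reduction $(\ldots,c_{j-1},1,c_{j+1},\ldots)\mapsto(\ldots,c_{j-1}-1,c_{j+1}-1,\ldots)$ produces a positive integral frieze pattern of width $n-1$ — positivity here requires checking $c_{j-1},c_{j+1}\geq 2$, which follows from the diamond rule around the $1$ — and by induction this smaller frieze comes from a triangulation of $P_{n-1}$, to which we glue back an ear at the appropriate vertex to obtain $T$ with $F_T = F$. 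Uniqueness follows because the frieze pattern determines its quiddity row and the reduction step is reversible and canonically determined once we track which entry was reduced; alternatively one notes the two maps just constructed are mutually inverse.

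The main obstacle I expect is not any single hard computation but rather the bookkeeping needed to make the induction airtight: specifically, verifying that the ``glueing back an ear'' operation on triangulations corresponds exactly to the inverse of the quiddity-row reduction, including getting the indices/rotation right modulo $n$, and confirming that positivity is genuinely preserved in both directions (the forward direction is easy; the backward direction needs the diamond rule to guarantee the neighbors of a $1$ in the quiddity row are at least $2$). A secondary subtlety is the base case — one should pin down the smallest polygon (a triangle, $n=3$, with the empty triangulation and all-ones quiddity row, or perhaps $n=4$) and check the degenerate frieze patterns there directly. Since this is a classical result, in the write-up I would likely cite \cite{conway1973triangulated,conway1973answers} for the full argument and only sketch the inductive reduction, emphasizing how it specializes \Cref{def:FriezePatternsFromDissections} and \Cref{lem:Kontinuant} to the triangulation case.
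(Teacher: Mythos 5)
The paper does not prove \Cref{thm:CCFrieze} at all: it is imported verbatim from Conway--Coxeter with a citation, and the surrounding text only uses it as a black box (via \Cref{def:FriezePatternsFromDissections} and the enumerative \Cref{cor:CountingFPs}). So there is nothing in the paper to compare against line by line; what you have written is the standard classical argument, and in outline it is correct. Your two directions are the usual ones: ear removal on the triangulation side matching the quiddity-row reduction $(\ldots,a,1,b,\ldots)\mapsto(\ldots,a-1,b-1,\ldots)$, and, for the converse, the observation that a positive integral frieze must have a quiddity entry equal to $1$ (your growth argument, or equivalently the fact that the quiddity entries of a closed positive frieze cannot all be $\geq 2$), followed by reduction and induction. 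Your check that the neighbors of a $1$ are $\geq 2$ via the diamond rule is the right local verification (for width $\geq 4$; the all-ones quiddity row of the width-$3$ base case is the only exception, which your base-case remark covers). It is worth noting that this style of argument is exactly what the authors do reprove in a harder setting: the proof of \Cref{prop:OrbifoldFriezesUnitaryOrder3} is explicitly modeled on the Conway--Coxeter argument, deriving a contradiction from assuming all quiddity-type values exceed $1$ and then inducting after cutting off the arc of value $1$. So your sketch is consistent with the paper's own treatment of the analogous statements.

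Two small cautions if you were to write this out in full. First, be careful with the paper's width convention ($P_n$ corresponds to width $n$ here, not the usual $n-3$), so your induction should pass from width $n$ to width $n-1$ exactly as you state, with base case $n=3$ and quiddity row identically $1$. Second, the parenthetical claim $\sum c_i = 3(n-2) - (\text{something nonnegative})$ is not something you may assume for an arbitrary positive integral frieze (it is a consequence of the theorem); your alternative argument -- that if every $c_i\geq 2$ then the diagonals are strictly increasing and the array can never close up with the rows of $1$'s and $0$'s required by \Cref{def:FriezePattern} -- is the one to keep, and it should be phrased via \Cref{lem:Kontinuant} or a monotonicity induction down the diagonals. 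With those points tightened, and given that the paper itself simply cites \cite{conway1973triangulated,conway1973answers}, citing rather than reproving is also a perfectly acceptable resolution, as you suggest.
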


Holm and J{\o}rgensen give a parallel theorem for general $(m+2)$-angulations.  The corresponding frieze patterns are said to be of \emph{type $\Lambda_{m+2}$}. A frieze pattern is of type $\Lambda_{m+2}$ if its quiddity row consists of positive integral multiples of $\lambda_{m+2}$.  

\begin{theorem}[\cite{holm2020p}]
\label{thm:HJ}
Every frieze pattern of type $\Lambda_{m+2}$ and width $n$ is uniquely associated to an $(m+2)$-angulation of $P_{n}$. 
\end{theorem}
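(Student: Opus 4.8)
The plan is to read \Cref{thm:HJ} as the statement that the assignment $T \mapsto F_T$ of \Cref{def:FriezePatternsFromDissections} restricts to a bijection from $(m+2)$-angulations of $P_n$ to frieze patterns of type $\Lambda_{m+2}$ and width $n$, and to prove this by induction on the number $\ell$ of subgons (equivalently on $n = m\ell + 2$). First I would check that $T \mapsto F_T$ is well defined. If $T$ is an $(m+2)$-angulation, its quiddity entries are $c_i \lambda_{m+2}$ where $c_i$ is the number of $(m+2)$-gons of $T$ incident to $v_i$, so membership in type $\Lambda_{m+2}$ is built in; what must be verified is that the resulting array closes up, i.e. that $F(i,i+n)=0$ and $F(i,i+n-1)=1$. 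By \Cref{lem:Kontinuant} this is the familiar $\mathrm{SL}_2$ condition $\prod_{i=1}^n \left(\begin{smallmatrix} c_i\lambda_{m+2} & -1 \\ 1 & 0 \end{smallmatrix}\right) = -I$. I would prove it by induction on $\ell$: for $\ell = 1$ the quiddity row is $(\lambda_{m+2},\dots,\lambda_{m+2})$ of length $m+2$, and $\left(\begin{smallmatrix} \lambda_{m+2} & -1 \\ 1 & 0 \end{smallmatrix}\right)$, having trace $2\cos(\pi/(m+2))$ and determinant $1$, is conjugate to rotation by $\pi/(m+2)$, so its $(m+2)$-th power is $-I$; for $\ell > 1$, cut off an ear $(m+2)$-gon and use the standard fact that the quiddity row of a dissection glued from two pieces along an edge is obtained from those of the pieces by overlapping and adding at the two shared vertices, so that the matrix products multiply. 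Positivity of the remaining entries is then automatic: every quiddity entry is a positive real and continuants of positive reals are positive, so \Cref{lem:Kontinuant} makes every interior $F(i,j)$ positive; hence $F_T$ is a genuine frieze pattern of type $\Lambda_{m+2}$ and width $n$.

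For bijectivity I would again induct on $\ell$. The base case $\ell = 1$ is a direct check: a positive-integer sequence $(c_1,\dots,c_{m+2})$ with $\prod \left(\begin{smallmatrix} c_i\lambda_{m+2} & -1 \\ 1 & 0\end{smallmatrix}\right) = -I$ is forced to be $(1,\dots,1)$, which is $F_Q$ for the unique $(m+2)$-angulation of $P_{m+2}$. The inductive step rests on peeling an ear. On the geometric side, the subgon-adjacency graph of an $(m+2)$-angulation with $\ell \geq 2$ is a tree with $\ell$ nodes, hence has a leaf; the corresponding ear $Q = v_iv_{i+1}\cdots v_{i+m+1}$ has diagonal $v_iv_{i+m+1}$ and $m$ ``private'' boundary vertices $v_{i+1},\dots,v_{i+m}$, each with $c=1$, and deleting them produces an $(m+2)$-angulation of $P_{n-m}$, an operation inverse to gluing. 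On the frieze side the matching move is: find $m$ consecutive quiddity entries equal to $\lambda_{m+2}$, delete them, and subtract $\lambda_{m+2}$ from each of the two flanking entries; using \Cref{lem:Kontinuant} (equivalently the matrix product) one checks the result is again a frieze of type $\Lambda_{m+2}$ and width $n-m$, and that the two moves are compatible with $T\mapsto F_T$. What makes the induction go through is that an ear is \emph{dictated} by the quiddity row: each boundary edge $v_jv_{j+1}$ lies on a unique subgon, so a block of $m$ consecutive vertices all with $c=1$ must all lie on one common $(m+2)$-gon, which is then forced to be an ear. Thus any $(m+2)$-angulation realizing a given type $\Lambda_{m+2}$ quiddity row has its ears at locations read off from that row; peeling one, invoking the inductive hypothesis for the width-$(n-m)$ frieze, and gluing back simultaneously yields surjectivity (every $(m+2)$-angulation has an ear, so lies in the image) and injectivity (the angulation is reconstructed from the quiddity row, which is recovered from $F$ by dividing the quiddity entries by $\lambda_{m+2}$).

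The substantive obstacle is the intrinsic ``ear lemma'': every frieze pattern of type $\Lambda_{m+2}$ and width $n > m+2$ has a run of $m$ consecutive quiddity entries equal to $\lambda_{m+2}$, and the reduced array above is again a positive, type-$\Lambda_{m+2}$ frieze of width $n-m$. For $m = 1$ this is exactly the classical Conway--Coxeter input behind \Cref{thm:CCFrieze}---a positive integral frieze of width $\geq 4$ has a $1$ in its quiddity row, and deleting it yields a smaller positive integral frieze---but for general $m$ one must analyze the positive integer solutions $(c_1,\dots,c_n)$ of $\prod_i \left(\begin{smallmatrix} c_i\lambda_{m+2} & -1 \\ 1 & 0\end{smallmatrix}\right) = -I$, for instance by bounding the $c_i$ from above using positivity of all the continuant entries and then extracting the block of $m$ ones, or by folding a type $\Lambda_{m+2}$ frieze down to an ordinary one. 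Once this lemma and the compatibility of the two reduction moves are established, the remaining pieces---the gluing lemma, the base case, and assembling the induction---are routine.
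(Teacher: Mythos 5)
This statement is not proved in the paper at all: it is quoted verbatim from Holm--J{\o}rgensen \cite{holm2020p}, so there is no internal proof to compare against, and your attempt has to stand on its own as a proof of their theorem. As such, it has a genuine gap exactly where you flag it. The well-definedness half (closing up via the $\mathrm{SL}_2$ product for the $(m+2)$-gon, gluing, and positivity from continuants via \Cref{lem:Kontinuant}) is fine, and the geometric side of the induction (existence of an ear, that $m$ consecutive quiddity coefficients equal to $1$ force an ear) is routine. But the ``intrinsic ear lemma'' --- that \emph{every} frieze of type $\Lambda_{m+2}$ and width $n>m+2$ has $m$ consecutive quiddity entries equal to $\lambda_{m+2}$, and that deleting them and subtracting $\lambda_{m+2}$ from the two flanking entries again yields a positive frieze of type $\Lambda_{m+2}$ --- is precisely the substantive content of Holm--J{\o}rgensen's theorem, and you do not prove it; you only name two possible strategies. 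Neither is obviously viable as stated: for $m\geq 2$ the entries lie in $\mathbb{Z}[\lambda_{m+2}]$ with $\lambda_{m+2}$ irrational (or transcendental-looking combinations thereof), so ``folding down to an ordinary frieze'' has no evident meaning, and the Conway--Coxeter counting/averaging argument that produces a quiddity entry $1$ does not transfer verbatim to producing a \emph{run} of $m$ entries with coefficient $1$. Without this lemma the induction never gets off the ground, so surjectivity onto friezes (every type-$\Lambda_{m+2}$ frieze arises from an angulation) is unproved.

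Two smaller points to attend to even once the ear lemma is supplied. First, the reduction on the frieze side must also show that the two flanking coefficients are at least $2$ when $n>m+2$ (otherwise subtraction leaves a nonpositive multiple of $\lambda_{m+2}$ and you exit the class); this needs an argument, e.g.\ ruling out $m+1$ consecutive coefficients equal to $1$ for $n>m+2$. Second, the base case ($n=m+2$, showing a positive integer sequence $(c_1,\ldots,c_{m+2})$ with $\prod_i\bigl(\begin{smallmatrix} c_i\lambda_{m+2} & -1\\ 1 & 0\end{smallmatrix}\bigr)=-I$ forces all $c_i=1$) is asserted as a ``direct check'' but does require a short positivity/continuant argument; it is not a tautology. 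If you want a self-contained treatment rather than citing \cite{holm2020p}, the honest route is to reproduce their analysis of the quiddity sequences (or an equivalent bound on the coefficients via positivity of all continuant entries), since that is where the theorem actually lives.
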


No similar characterization of frieze patterns from general dissections is known. However,  Holm and J{\o}rgensen showed that these always the expected width.

\begin{proposition}[\cite{holm2020p}]
\label{prop:WidthOfFPFromDissection}
If $F_T$ is a frieze pattern associated to a dissection $T$ of $P_{n}$, then $F_T$ has width $n$.
\end{proposition}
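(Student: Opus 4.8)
The plan is to show that the frieze pattern $F_T$ associated to a dissection $T$ of $P_n$ is \emph{glide-symmetric} with period related to $n$, which forces it to have width $n$ in the sense of \Cref{def:FriezePattern}. Concretely, I would first recall that by \Cref{lem:Kontinuant}, every entry of $F_T$ is a continuant evaluated on a window of the quiddity row, and the quiddity row $\big(F(i-1,i+1)\big) = \big(\sum_{p\ge 3}\mu_p(\overline{i})\lambda_p\big)$ is periodic with period $n$ by construction (since $\overline{i}$ is taken mod $n$). So it suffices to show that the resulting infinite frieze array closes up correctly: that $F(i,i+n)=0$ and $F(i,i+n-1)=1$ for all $i$, which is exactly the width-$n$ boundary condition.

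The key step is a continuant identity. Set $x_i := F(i-1,i+1) = \sum_{p\ge 3}\mu_p(\overline{i})\lambda_p$ for $i \in \mathbb{Z}$, so the $x_i$ are $n$-periodic. By \Cref{lem:Kontinuant}, $F(i,i+n-1) = K_{n-2}(x_{i+1},\ldots,x_{i+n-2})$ and $F(i,i+n) = K_{n-1}(x_{i+1},\ldots,x_{i+n-1})$. The claim reduces to the statement that for any dissection, the quiddity sequence $x_1,\ldots,x_n$ satisfies $K_{n-1}(x_1,\ldots,x_{n-1}) = 0$ (equivalently the associated product of $2\times 2$ matrices $\prod_{i=1}^n \left(\begin{smallmatrix} x_i & -1 \\ 1 & 0\end{smallmatrix}\right)$ equals $-\mathrm{Id}$, the ``polygon closing'' condition), together with $K_{n-2}(x_1,\ldots,x_{n-2}) = 1$. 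I would prove this by induction on the number of subgons in $T$, or equivalently on the number of diagonals. In the base case $T$ has a single subgon, an $n$-gon, so every $\mu_p(\overline{i})$ is $0$ except $\mu_n(\overline{i}) = 1$; thus each $x_i = \lambda_n = 2\cos(\pi/n)$, and the required identities $K_{n-1}(\lambda_n,\ldots,\lambda_n) = 0$, $K_{n-2}(\lambda_n,\ldots,\lambda_n)=1$ are the classical facts about Chebyshev-like continuants evaluated at $2\cos(\pi/n)$ (equivalently, the ``trivial'' frieze of width $n$). For the inductive step, I would cut $T$ along a diagonal $(v_a, v_b)$ into a dissection $T_1$ of an $(b-a+1)$-gon and $T_2$ of an $(n-b+a+1)$-gon; the quiddity entry at each endpoint $v_a, v_b$ splits as a sum $\mu_p$-contributions from $T_1$ and from $T_2$. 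The additivity in \Cref{def:FriezePatternsFromDissections} together with a gluing lemma for continuants (the product of the two matrix products for $T_1$ and $T_2$, after inserting the matrix for the shared diagonal, gives the matrix product for $T$) yields the closing relation for $T$ from those for $T_1$ and $T_2$.

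Alternatively — and this may be cleaner — I would invoke the geometric/triangulation viewpoint: refine the dissection $T$ to a triangulation $\widehat{T}$ of $P_n$ by adding diagonals inside each subgon. For the triangulation $\widehat{T}$, $F_{\widehat{T}}$ is a genuine positive integral frieze pattern of width $n$ by \Cref{thm:CCFrieze}. Then I would argue that $F_T$ is obtained from $F_{\widehat{T}}$ by a ``mutation-like'' local operation at each added diagonal that preserves the width, using the fact that replacing a triangulated $p$-gon with more diagonals versus fewer corresponds to the Holm–J{\o}rgensen relation $\lambda_p$-entries; but this requires that such a move is known to preserve the closing condition, which is essentially what Holm–J{\o}rgensen established for the $(m+2)$-angulation case in \Cref{thm:HJ}. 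The main obstacle in either route is the continuant gluing identity: one must carefully track the boundary entries at the endpoints of the cutting diagonal, because those are exactly the vertices where the $\mu_p$-counts are split between the two pieces, and verify that the product of $SL_2$-matrices factors accordingly — this is the computational heart and the place where sign conventions and off-by-one indexing must be handled with care. I expect everything else (periodicity, the base case, assembling the boundary conditions from $K_{n-1}=0$ and $K_{n-2}=1$) to be routine once that identity is in hand.
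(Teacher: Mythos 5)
The paper gives no proof of \Cref{prop:WidthOfFPFromDissection}: it is imported verbatim from \cite{holm2020p}, so there is no in-paper argument to compare against. Your first route is a correct, self-contained reconstruction and is in the same spirit as the standard (Holm--J{\o}rgensen-style) proof. The reduction is right: since $F_T$ is generated from its $n$-periodic quiddity row $(x_i)$ via \Cref{lem:Kontinuant} (with the diamond rule automatic from the continuant determinant identity), width $n$ amounts to $K_{n-2}(x_{i+1},\ldots,x_{i+n-2})=1$ and $K_{n-1}(x_{i+1},\ldots,x_{i+n-1})=0$ for all $i$; one small imprecision is that $K_{n-1}=0$ alone is not equivalent to $\prod_{i=1}^{n}\left(\begin{smallmatrix} x_i & -1\\ 1 & 0\end{smallmatrix}\right)=-\mathrm{Id}$, but the two families of conditions taken over all cyclic shifts are, and the matrix form is the convenient cyclically invariant packaging. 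Your base case is correct: for the empty dissection $x_i=\lambda_n=2\cos(\pi/n)$ and $K_m(\lambda_n,\ldots,\lambda_n)=\sin((m+1)\pi/n)/\sin(\pi/n)$, so $K_{n-2}=1$ and $K_{n-1}=0$. The gluing identity you flag as the computational heart does hold and is short: with $M(x)=\left(\begin{smallmatrix} x & -1\\ 1 & 0\end{smallmatrix}\right)$ one has $M(a)M(0)M(b)=-M(a+b)$ and $M(0)^2=-\mathrm{Id}$; writing the glued quiddity product starting at one endpoint of the cut diagonal, splitting the two endpoint entries (which are additive across the two pieces by \Cref{def:FriezePatternsFromDissections}), and substituting $-\mathrm{Id}$ for each piece's product collapses everything to $-\mathrm{Id}$, so the induction on the number of diagonals closes. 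Your alternative route (refine $T$ to a triangulation and transfer the width across ``mutation-like'' moves) is circular as stated, since the claim that such a move preserves the closing condition is essentially the statement being proved, i.e.\ what \cite{holm2020p} establishes; I would drop it and keep the first argument.
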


\begin{example}
\label{ex:BijectionsToFPs}
First,  we illustrate \Cref{thm:CCFrieze} by giving an example of a triangulated hexagon and  the corresponding frieze pattern with width 3. We label each vertex $v_i$ with $F(i-1,i+1)$.

\begin{center}
  \raisebox{-.4\totalheight}{\begin{tikzpicture}[scale=0.5]
    \newdimen\R
    \R=2.7cm
    \draw(0:\R) node[right]{3} -- (60:\R)node[above]{1} -- (120:\R)  node[above]{2}-- (180:\R) node[left]{3} -- (240:\R) node[below]{1}-- (300:\R) node[below]{2}-- (360:\R);
    \draw(-60:\R) -- (180:\R) -- (0:\R) -- (120:\R);
\draw[fill=black] (0:\R) circle (.12cm);
\draw[fill=black] (60:\R) circle (.12cm);
\draw[fill=black] (120:\R) circle (.12cm);
\draw[fill=black] (180:\R) circle (.12cm);
\draw[fill=black] (240:\R) circle (.12cm);
\draw[fill=black] (300:\R) circle (.12cm);
\end{tikzpicture}}
 \scalebox{0.75}{$\begin{array}{cccccccccccccccccccc}
   &&0&&  0&&  0&&  0&&  0&&  0&&  0&&  0&&0&\\
  &&&  1&&  1&&  1&&  1&&  1&&  1&&  1&&1&&1\\
 &&2&&1&&3&&  2&&  1&&  3&&  2&&1&&3&\\
  &&&1&&  2&&  5&& 1&&  2&&  5&&1&&2&&5\\
 &&2&&1&&3&&  2&&  1&&  3&&  2&&1&&3&\\
   &&&  1&&  1&&  1&&  1&&  1&&  1&&  1&&1&&1\\
     &&0&&  0&&  0&&  0&&  0&&  0&&  0&&  0&&0&\\
 \end{array}$}
 \end{center}
 
 Now,  we turn to the setting of \Cref{thm:HJ}.  Consider the 4-angulation of a hexagon below. The associated frieze pattern still has width 3 but it now has values in $\mathbb{Z}[\sqrt{2}]$ since $\lambda_4 = \sqrt{2}$.
 
 \begin{center}
  \raisebox{-.4\totalheight}{\begin{tikzpicture}[scale=0.5]
    \newdimen\R
    \R=2.7cm
    \draw(0:\R) node[right]{$2\sqrt{2}$} -- (60:\R)node[above]{$\sqrt{2}$} -- (120:\R)  node[above]{$\sqrt{2}$}-- (180:\R) node[left]{$2\sqrt{2}$} -- (240:\R) node[below]{$\sqrt{2}$}-- (300:\R) node[below]{$\sqrt{2}$}-- (360:\R);
    \draw(180:\R) -- (0:\R);
\draw[fill=black] (0:\R) circle (.12cm);
\draw[fill=black] (60:\R) circle (.12cm);
\draw[fill=black] (120:\R) circle (.12cm);
\draw[fill=black] (180:\R) circle (.12cm);
\draw[fill=black] (240:\R) circle (.12cm);
\draw[fill=black] (300:\R) circle (.12cm);
\end{tikzpicture}}
 \scalebox{0.75}{$\begin{array}{cccccccccccccccccccc}
   &&0&&  0&&  0&&  0&&  0&&  0&&  0&&  0&&0&\\
  &&&  1&&  1&&  1&&  1&&  1&&  1&&  1&&1&&1\\
 &&\sqrt{2}&&\sqrt{2}&&2\sqrt{2}&&  \sqrt{2}&&  \sqrt{2}&&  2\sqrt{2}&&  \sqrt{2}&&\sqrt{2}&&2\sqrt{2}&\\
  &&&1&&  3&&  3&& 1&&  3&&  3&&1&&3&&3\\
 &&\sqrt{2}&&\sqrt{2}&&2\sqrt{2}&&  \sqrt{2}&&  \sqrt{2}&&  2\sqrt{2}&&  \sqrt{2}&&\sqrt{2}&&2\sqrt{2}&\\
   &&&  1&&  1&&  1&&  1&&  1&&  1&&  1&&1&&1\\
     &&0&&  0&&  0&&  0&&  0&&  0&&  0&&  0&&0&\\
 \end{array}$}
 \end{center}
 
 Finally,  we consider a dissection of a hexagon which consists of subgons of different sizes.
 
 \begin{center}
  \raisebox{-.4\totalheight}{\begin{tikzpicture}[scale=0.5]
    \newdimen\R
    \R=2.7cm
    \draw(0:\R) node[right]{$2+\sqrt{2}$} -- (60:\R)node[above]{$1$} -- (120:\R)  node[above]{$2$}-- (180:\R) node[left]{$1+\sqrt{2}$} -- (240:\R) node[below]{$\sqrt{2}$}-- (300:\R) node[below]{$\sqrt{2}$}-- (360:\R);
    \draw(180:\R) -- (0:\R);
    \draw(0:\R) -- (120:\R);
    \draw[fill=black] (0:\R) circle (.12cm);
\draw[fill=black] (60:\R) circle (.12cm);
\draw[fill=black] (120:\R) circle (.12cm);
\draw[fill=black] (180:\R) circle (.12cm);
\draw[fill=black] (240:\R) circle (.12cm);
\draw[fill=black] (300:\R) circle (.12cm);
\end{tikzpicture}}
 \scalebox{0.75}{$\begin{array}{cccccccccccccccccc}
   &&0&&  0&&  0&&  0&&  0&&  0&&  0&&  0&\\
  &&&  1&&  1&&  1&&  1&&  1&&  1&&  1&&1\\
 &&2&&1&&2+\sqrt{2}&&  \sqrt{2}&&  \sqrt{2}&&  1+\sqrt{2}&&  2&&1&\\
  &&&1&&  1+\sqrt{2}&&  1+2\sqrt{2}&& 1&&  1+\sqrt{2}&&  1+2\sqrt{2}&&1&&1+\sqrt{2}\\
 &&\sqrt{2}&&\sqrt{2}&&1+\sqrt{2}&&  2&& 1&&  2+\sqrt{2}&&  \sqrt{2}&&\sqrt{2}&\\
   &&&  1&&  1&&  1&&  1&&  1&&  1&&  1&&1\\
     &&0&&  0&&  0&&  0&&  0&&  0&&  0&&  0&\\
 \end{array}$}
 \end{center}
\end{example}

\Cref{thm:CCFrieze} and \Cref{thm:HJ} have immediate,  enumerative corollaries.

\begin{corollary}
\label{cor:CountingFPs}
Frieze patterns of  type $\Lambda_{m+2}$ and width $n$ only exist if $n = m\ell + 2$.  
If  $n = m\ell + 2$, then there are $c_\ell^{(m)}$ frieze patterns of type $\Lambda_{m+2}$ and width $n$. 
In particular, there are $c_{n-2}$ positive integral frieze patterns of width $n$.
\end{corollary}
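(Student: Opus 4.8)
The plan is to read the statement off \Cref{thm:HJ}, which already supplies a bijection between frieze patterns of type $\Lambda_{m+2}$ and width $n$ and $(m+2)$-angulations of $P_n$. Once this bijection is in hand, the corollary amounts to a translation of the two basic facts about $(m+2)$-angulations of an $n$-gon — when they exist and how many there are — both of which appear, in the unpunctured setting, in \Cref{sec: dissections of polygons}.

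For the existence statement, I would recall from the discussion in \Cref{sec: dissections of polygons} that dissecting a polygon into $(m+2)$-gons forces the number of vertices, minus two, to be a positive multiple of $m$: writing $\mu = (0,\dots,0,\mu_m,0,\dots,0)$ one has $n = m\mu_m$, so such a dissection lives on $P_{m\mu_m+2}$. Hence an $(m+2)$-angulation of $P_n$ exists if and only if $n = m\ell + 2$ for some integer $\ell \geq 1$, and when it does it cuts $P_n$ into exactly $\ell$ subgons. (Equivalently, this is the well-known fact already invoked in the proof of \Cref{lem:WhenCanWemangulate}.) Combined with \Cref{thm:HJ}, this gives the first sentence of the corollary.

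For the enumeration, suppose $n = m\ell + 2$. Then the $(m+2)$-angulations of $P_n = P_{m\ell+2}$ are precisely the elements of $\mathcal{A}_\mu$ for $\mu$ the vector with $\ell$ in its $m$-th coordinate and zeros elsewhere, so by the formula for $a_\mu$ recalled in \Cref{sec: dissections of polygons} there are $a_\mu = c_\ell^{(m)}$ of them. Transporting this count across the bijection of \Cref{thm:HJ} yields exactly $c_\ell^{(m)}$ frieze patterns of type $\Lambda_{m+2}$ and width $n$.

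Finally, for the ``in particular'' clause one specializes to $m = 1$. Since $\lambda_3 = 2\cos(\pi/3) = 1$, a frieze pattern of type $\Lambda_3$ is one whose quiddity row consists of positive integers — that is, a positive integral frieze pattern — and a $3$-angulation is just a triangulation, which is consistent with \Cref{thm:CCFrieze}. With $m = 1$ the relation $n = m\ell + 2$ reads $\ell = n - 2$, and the count becomes $c_{n-2}^{(1)} = c_{n-2}$, the number of triangulations of $P_n$. There is no real obstacle anywhere in this argument: everything of substance is contained in \Cref{thm:CCFrieze} and \Cref{thm:HJ}, and the only points needing a sentence of care are the divisibility condition $n = m\ell+2$ and the identification of ``type $\Lambda_3$'' with ``positive integral'' in the case $m = 1$.
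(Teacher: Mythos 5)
Your proposal is correct and follows exactly the route the paper takes: the paper states this as an immediate consequence of \Cref{thm:CCFrieze} and \Cref{thm:HJ} together with the Fuss--Catalan count $c_\ell^{(m)}$ of $(m+2)$-angulations of $P_{m\ell+2}$ from \Cref{sec: dissections of polygons}, which is precisely your argument (including the $m=1$ specialization via $\lambda_3=1$).
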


To discuss a geometric model for infinite frieze patterns, we will need to extend \Cref{def:FriezePatternsFromDissections}. Let $S$ be an arbitrary orientable surface with nonempty boundary and a finite set of marked points.  
Assume moreover that every boundary component of $S$ contains at least one marked point.  
A dissection $T$ of $S$ is said to be \emph{cellular} if all connected components in $S \backslash T$ are discs.

\begin{definition}\label{defn:FriezePatternGeneralSurface}
Let $S$ be a surface with nonempty boundary and let $T$ be a cellular dissection of $S$.  
Fix one boundary component of $S$ and label the vertices of this boundary component $v_0,\ldots,v_{n-1}$ in clockwise order.  
For each $0 \leq i \leq n-1$,  let $h_i$, be the intersection of a small circle centered at $v_i$ with $S$.  
Let $\mu_p(i)$ denote the number of $p$-gons incident to $v_i$, where subgons intersected in multiple places by $h_i$ contribute multiple times.  
Let $F_T$ be the frieze pattern with quiddity row given by setting $F_T(i-1,i+1) = \sum_{p \geq 3}\mu_p(\overline{i})\lambda_p$.
\end{definition}

The following example highlights the use of $h_i$ in the above definition.

\begin{example}\label{ex:InfiniteFriezePatterns}
We apply \Cref{defn:FriezePatternGeneralSurface} to a triangulation of $P_6^\bullet$.  
Notice that even though globally we see that the vertex labeled $7$ is incident to 6 triangles, the ``folded'' triangle contributes 2 to the sum. 
The resulting frieze pattern is infinite; we display the first few rows.

\begin{center}
\raisebox{-.4\totalheight}{\begin{tikzpicture}[scale=0.5]
   \newdimen\R
    \R=2.7cm
    
    \draw (0:\R) \foreach \x in {60,120,...,360} {  -- (\x:\R) };   

    \foreach \x/\l/\p in
     {60/{\small $1$}/above,
      120/{\small $4$}/above,
      180/{\small $1$}/left,
      240/{\small $3$}/below,
      300/{\small $1$}/below,
     360/{\small $7$}/right
     }
    \node[inner sep=1pt,circle,draw,fill,label={\p:\l}] at (\x:\R) {};

    \draw[fill=black] (0,0) circle (.18cm);
    
    \draw(\R,0) to [out = 150, in = 90] (-0.8,0);
    \draw(\R,0) to [out = 210, in = -90] (-0.8,0);
    \draw(\R,0) -- (0,0);
    \draw(\R,0) to [out = 140, in = -30] (2.7*-0.5,2.7*0.866);
    \draw(\R,0) to [out = 220, in = 0] (0,-1.4);
    \draw(0,-1.4) to [out = 180, in = -90] (2.7*-0.5,2.7*0.866);
    \draw(\R,0) to [out = 240, in = 15] (2.7*-0.5,-2.7*0.866);
    \draw (2.7*-0.5,-2.7*0.866) to [out = 105, in = 255] (2.7*-0.5,2.7*0.866);
\end{tikzpicture}}
 \scalebox{0.75}{$\begin{array}{cccccccccccccccccc}
   &&0&&  0&&  0&&  0&&  0&&  0&&  0&&  0&\\
  &&&  1&&  1&&  1&&  1&&  1&&  1&&  1&&1\\
 &&1&&7&&1&&  3&& 1&& 4&&  1&&7&\\
  &&&6&&  6&&  2&& 2&&  3&&  3&&6&&6\\
 &&17&&5&&11&&  1&& 5&&  2&&17&&5&\\
   &&&  14&&  9&&  5&&  2&&  3&&  11&&  14&&9\\
  &&9&&5&&4&&  9&& 1&&  16&&9&&5&\\
    &&&  16&&  11&&  7&&  4&&  7&&  13&&  16&&11\\
 \end{array}$}
 \end{center}
 
We also illustrate the first few rows of an infinite frieze pattern from a dissection of an annulus.
 
\begin{center}
  \raisebox{-.4\totalheight}{\begin{tikzpicture}[scale = 1]
 \draw (0,0) circle (1.6cm);
\draw (0,0) circle (.4cm);
 \coordinate(A) at (120:1.6cm);
 \node[above, scale = 0.8, xshift = -8pt] at (120:1.6cm){$2+\sqrt{2}$};
 \node[left, scale = 0.8] at (180:1.6cm){$1$};
  \node[below, scale = 0.8, yshift = -5pt] at (240:1.6cm){$2+2\sqrt{2}$};
 \coordinate(B) at (240:1.6cm);
 \coordinate(C) at (90:.4cm);
 \coordinate(D) at (270:.4cm);
 \draw[fill=black] (90:.4cm) circle (.06cm);
\draw[fill=black] (270:.4cm) circle (.06cm);
\draw[fill=black] (240:1.6cm) circle (.06cm);
 \draw[fill=black] (120:1.6cm) circle (.06cm);
  \draw[fill=black] (180:1.6cm) circle (.06cm);
  \draw[fill=black] (0:0.4cm) circle (.06cm);
\draw (B) to [out = 100, in = -100] (A);
\draw(A)--(C);
\draw(B) -- (D);
\draw(B) to [out = 30, in = 0, looseness = 3] (C);
  \end{tikzpicture}}
 \scalebox{0.75}{
$ \begin{array}{cccccccccccccccccccc}
  0&&0&&0&&0&&0&&\\
 &1& &1&&1&&1&\\
 1&&2+\sqrt{2}&&2+2\sqrt{2}&&1&&2+\sqrt{2}&&\\
&1+\sqrt{2}& &7 + 6\sqrt{2}&&1+2\sqrt{2}&&1+\sqrt{2}&\\
  4+3\sqrt{2}&&5+4\sqrt{2}&&5+5\sqrt{2}&&4+3\sqrt{2}&&5+4\sqrt{2}&&\\
&18+13\sqrt{2}& &4 + 3\sqrt{2}&&13+9\sqrt{2}&&18+13\sqrt{2}&\\
 \end{array}$}
 \end{center}
\end{example}

Baur,  Fellner,  Parsons, and Tschabold studied the growth behavior of periodic,  infinite frieze patterns.  This growth is encoded in a family of integers, called \emph{growth coefficients}, which measure how fast diagonals in the frieze pattern grow.  
Let $T_k(x)$ denote the $k$th normalized Chebyshev polynomial of the first kind, defined by the recurrence 
\[
T_k(x) = xT_{k-1}(x) - T_{k-2}(x) \text{ for } k \geq 2 \qquad  T_0(x) = 2 \quad T_1(x) = x.
\]

\begin{theorem}
[\cite{baur2019growth}]\label{thm:GrCo}
Let $F$ be an infinite frieze pattern and let $j$ be the minimal period of the quiddity row.  Then,  for all $k \geq 1$,  $F(i,i+j k + 1) - F(i+1,i+jk)$ is constant for all $i \in \mathbb{Z}$. 
Moreover, if $s_k$ denotes this constant difference,  then $s_k = T_k(s_1)$ for all $\ell \geq 1$.
\end{theorem}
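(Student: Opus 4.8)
The plan is to realize the two frieze entries appearing in the statement as the $(1,1)$- and $(2,2)$-entries of a single power of a ``monodromy'' matrix built from the quiddity row. Once that is done, the first assertion (independence of $i$) is conjugation-invariance of the trace, and the second ($s_k = T_k(s_1)$) is the Cayley--Hamilton recursion for $2\times 2$ matrices of determinant $1$, which coincides with the defining recursion of the normalized Chebyshev polynomials.

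Write $c_m := F(m,m+2)$ for the entries of the quiddity row, so that $c_{m+j}=c_m$ for all $m$. By \Cref{lem:Kontinuant}, for every $N\geq 1$ the entry $F(i,i+N)$ is the continuant of the quiddity values $c_i,c_{i+1},\dots,c_{i+N-2}$. The first step I would record is the matrix form of this identity: with $C(x):=\begin{pmatrix} x & -1 \\ 1 & 0 \end{pmatrix}$, which has determinant $1$, a straightforward induction on the continuant recursion shows that
\[
C(c_i)\,C(c_{i+1})\cdots C(c_{i+N-1})=\begin{pmatrix} F(i,i+N+1) & -F(i,i+N) \\ F(i+1,i+N+1) & -F(i+1,i+N) \end{pmatrix}.
\]
Here the sign in $C(x)$ is the one compatible with the frieze rule; I would double-check it against the friezes displayed in \Cref{ex:BijectionsToFPs} and \Cref{ex:InfiniteFriezePatterns}.

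Now fix $k\geq 1$ and take $N=jk$. Splitting the product $C(c_i)C(c_{i+1})\cdots C(c_{i+jk-1})$ into $k$ consecutive blocks of length $j$ and using the $j$-periodicity of $(c_m)$ to identify each block with $S_i:=C(c_i)C(c_{i+1})\cdots C(c_{i+j-1})$, we get that this product equals $S_i^{\,k}$, a matrix of determinant $1$. Reading off the $(1,1)$- and $(2,2)$-entries from the displayed identity gives
\[
F(i,i+jk+1)-F(i+1,i+jk)=(S_i^{\,k})_{11}+(S_i^{\,k})_{22}=\operatorname{tr}\bigl(S_i^{\,k}\bigr).
\]
For the first assertion, peeling $C(c_i)$ off the front of $S_i$ and appending $C(c_{i+j})=C(c_i)$ at the back yields $S_{i+1}=C(c_i)^{-1}S_i\,C(c_i)$, so all the $S_i$ are conjugate and $\operatorname{tr}(S_i^{\,k})$ is independent of $i$; denote it $s_k$. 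For the second, since $\det S_i=1$, Cayley--Hamilton gives $S_i^{2}=(\operatorname{tr}S_i)S_i-I$, hence $\operatorname{tr}(S_i^{\,k})=(\operatorname{tr}S_i)\operatorname{tr}(S_i^{\,k-1})-\operatorname{tr}(S_i^{\,k-2})$ with $\operatorname{tr}(S_i^{\,0})=2$ and $\operatorname{tr}(S_i^{\,1})=s_1$; this is exactly the recursion and the initial data defining $T_k$, so $s_k=\operatorname{tr}(S_i^{\,k})=T_k(\operatorname{tr}S_i)=T_k(s_1)$.

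The one genuinely delicate point is the bookkeeping behind the displayed matrix identity: matching the index shifts in $F(i,i+jk+1)$ and $F(i+1,i+jk)$ to the $(1,1)$ and $(2,2)$ positions, and fixing the sign in $C(x)$ so that the quantity in the theorem emerges as the \emph{trace} $\operatorname{tr}(S_i^{\,k})$ rather than the ``antitrace'' $(S_i^{\,k})_{11}-(S_i^{\,k})_{22}$, which is not conjugation-invariant and would not be Chebyshev-controlled. Once that is pinned down, both halves of the statement fall out at once. I would also note that periodicity enters the argument only through $c_{m+j}=c_m$, so it works verbatim with $j$ replaced by any period of the quiddity row; minimality of $j$ is needed only so that $s_1$ is the intended growth coefficient.
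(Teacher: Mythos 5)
The paper does not actually prove \Cref{thm:GrCo}; it is imported from \cite{baur2019growth} as a black box, so there is no internal argument to compare against. Judged on its own, your proof is correct, and it is essentially the standard transfer-matrix/trace argument for periodic tame friezes (the same mechanism used in the source): realize the difference $F(i,i+jk+1)-F(i+1,i+jk)$ as $\operatorname{tr}(S_i^{\,k})$, get $i$-independence from the conjugacy $S_{i+1}=C(c_i)^{-1}S_iC(c_i)$, and get the Chebyshev recursion from Cayley--Hamilton for $\det S_i=1$. The one item you deferred --- the sign in $C(x)$ --- resolves exactly as you need: the induction proving your displayed matrix identity requires the continuant recursion $K_n=x_nK_{n-1}-K_{n-2}$, and although \Cref{lem:Kontinuant} as printed states the recursion with a $+$, the friezes in \Cref{ex:BijectionsToFPs} visibly obey the minus version (the entry below adjacent quiddity entries $a,b$ is $ab-1$), so $C(x)=\left(\begin{smallmatrix} x & -1\\ 1 & 0\end{smallmatrix}\right)$ is the compatible choice, it has determinant $1$, and the identification of the bracketed difference with the trace (rather than an antitrace) goes through. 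Your closing observation that periodicity is used only via $c_{m+j}=c_m$, with minimality of $j$ irrelevant to the algebra, is also accurate. The only caution I would add is that the example arrays in the paper contain arithmetic typos in their lower rows, so the sign should be calibrated against rows $2$--$4$ of \Cref{ex:BijectionsToFPs}, where the data is clean.
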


We will primarily focus on the first growth coefficient, $s_1$, since determines all others. Sometimes $s_1$ is also called the \emph{principal growth coefficient}. 
For example, using the minimal periods,  we have that the principal growth coefficients in the first example is 2 and in the second example is $3 + 3\sqrt{2}$. 

Baur,  Parsons, and Tschabold gave a geometric model for all infinite, positive integral frieze patterns.  The following is a combination of this characterization and a characterization of the growth coefficients.

\begin{theorem}[\cite{baurInfiniteFriezes}]
\label{thm:CharacterizeInfiniteFriezePatterns}
All infinite, positive integral frieze patterns arise from a triangulation of $P_n^\bullet$ or of an annulus.  A frieze pattern arises from a triangulation of $P_n^\bullet$ if and only if its principal growth coefficient is 2.
\end{theorem}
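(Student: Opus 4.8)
The plan is to reduce everything to the monodromy of the quiddity row, using the geometric model of \cite{baurInfiniteFriezes} together with the growth-coefficient bookkeeping of \cite{baur2019growth}. Fix an infinite, positive integral frieze pattern $F$; by periodicity let $j$ be the minimal period of its quiddity row $(a_i)_{i\in\mathbb Z}$, so each $a_i\in\mathbb Z_{>0}$, and form the period matrix
\[
M \;=\; \prod_{i=1}^{j}\begin{pmatrix} a_i & -1\\ 1 & 0\end{pmatrix}\ \in\ \mathrm{SL}_2(\mathbb Z).
\]
By \Cref{lem:Kontinuant} the entries of $F$ on the diagonals $F(i,i+Nj+r)$ are read off from $M^N$, and a short computation identifies the principal growth coefficient with the trace: after orienting the boundary appropriately, $s_1=\mathrm{tr}(M)$, and $\mathrm{tr}(M^N)=T_N(\mathrm{tr}(M))$ matches $s_N=T_N(s_1)$ from \Cref{thm:GrCo}. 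Hence $s_1\ge 2$ (cf. \cite{baur2019growth}), with $s_1=2$ exactly when $M$ is parabolic, and $s_1>2$ exactly when $M$ is hyperbolic. Since $F$ is genuinely infinite (rather than coming from a polygon via \Cref{thm:CCFrieze}), $M$ has infinite order, so $M\neq\pm I$ and $M$ is not elliptic; thus $M$ is parabolic or hyperbolic.

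For the first sentence of the theorem I would realize $F$ by a triangulated strip: the infinite analogue of the polygon model behind \Cref{thm:CCFrieze} presents $F$ as the frieze of a triangulation of the planar region between a bi-infinite zigzag path carrying marked points $\dots,v_{-1},v_0,v_1,\dots$ and a second bi-infinite boundary curve, with $F(i-1,i+1)=a_i$ equal to the number of triangles at $v_i$. Periodicity of the quiddity row makes the period-$j$ shift a symmetry of this triangulated strip acting as the deck transformation $M$; quotienting by the resulting $\mathbb Z$-action yields a surface $S$ with a cellular triangulation $T$ for which $F=F_T$ in the sense of \Cref{defn:FriezePatternGeneralSurface} (the circles $h_i$ account for folded triangles exactly as in \Cref{ex:InfiniteFriezePatterns}). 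When $M$ is hyperbolic the quotient has two boundary circles, so $S$ is an annulus; when $M$ is parabolic the unique fixed point of $M$ ``at infinity'' collapses to an interior marked point, so $S$ is a once-punctured disc $P_n^\bullet$. This establishes that every such $F$ arises from a triangulated annulus or $P_n^\bullet$.

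For the second sentence it then suffices to compute $s_1$ on each model. If $T$ is a triangulation of $P_n^\bullet$ then $T$ has a spoke, and iterating $h_i$ around the boundary always meets this spoke; this makes $M$ conjugate to a unipotent matrix up to sign, so $\mathrm{tr}(M)=\pm 2$ and $s_1=2$ (this can also be checked via the Broline, Crowe, and Isaacs path count \cite{BCI}, or directly from \Cref{thm:GrCo} and \cite{baur2019growth}). If $T$ triangulates an annulus there is an arc joining the two boundary circles, which forces the monodromy to be hyperbolic (equivalently, the diagonals of $F_T$ grow exponentially), so $s_1>2$. Combined with the first sentence, a frieze has $s_1=2$ precisely when it comes from some $P_n^\bullet$.

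The main obstacle is the surface-construction step: making precise the passage from the abstract frieze $F$ to the triangulated strip and then to the quotient $S$, and verifying that $T$ is a cellular dissection whose frieze---computed with the circles $h_i$ and the doubled contribution of folded triangles---coincides with $F$. In particular the degenerate parabolic case, where the monodromy's fixed point must be recognized as a puncture rather than a boundary marked point, needs care; an alternative is the Conway--Coxeter-style ear reduction of \cite{baurInfiniteFriezes}, which terminates at a base triangulation of $P_n^\bullet$ or of an annulus. By contrast, the trace computation in the first paragraph and the two model computations in the third are short once \Cref{lem:Kontinuant}, \Cref{thm:GrCo}, and the geometric models are in hand.
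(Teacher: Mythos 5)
The statement you are proving is not proved in this paper at all: it is quoted as background from Baur--Parsons--Tschabold \cite{baurInfiniteFriezes}, whose argument works directly with the quiddity sequence (a realization/reduction argument in the spirit of Conway--Coxeter ear-cutting, producing a triangulated annulus or once-punctured disc realizing the given periodic quiddity row). Your monodromy approach is therefore a genuinely different route, and parts of it are sound: the identification $s_1=\mathrm{tr}(M)$ via continuants is correct (by \Cref{lem:Kontinuant}, $F(0,j+1)=K_j(a_1,\ldots,a_j)$ and $F(1,j)=K_{j-2}(a_2,\ldots,a_{j-1})$ are precisely the relevant matrix entries), the compatibility $\mathrm{tr}(M^N)=T_N(\mathrm{tr}(M))$ with \Cref{thm:GrCo} is fine, and the exclusion of elliptic/finite-order $M$ can be made airtight by noting that positivity forces $K_j\geq K_{j-1}+K_{j-2}$, so entries strictly increase along diagonals.

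The genuine gap is the step you yourself flag: the passage from the abstract frieze $F$ to a triangulated strip ``between a bi-infinite zigzag and a second boundary curve'' whose $\mathbb{Z}$-quotient is the desired surface. There is no a priori geometric model attached to an arbitrary infinite positive integral frieze; producing one is exactly the content of the first sentence of \Cref{thm:CharacterizeInfiniteFriezePatterns}, so as written the argument is circular. Note also that the lift of a triangulated $P_n^\bullet$ is not a strip with marked points on two boundary curves: spokes lift to asymptotic arcs with a single endpoint, so the ``second boundary'' carries no marked points, and the heuristic ``the parabolic fixed point collapses to an interior marked point'' would need a real argument (this degenerate case is precisely where the quotient construction is delicate). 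Similarly, ``an arc joining the two boundary circles forces $M$ hyperbolic'' and ``a spoke makes $M$ unipotent up to sign'' are assertions, not proofs; they are true, but establishing them requires either the matching interpretation of \cite{BCI} extended to the strip or the explicit growth-coefficient computations of \cite{baur2019growth}, i.e., the same machinery the cited proof uses. To complete your approach you would need an independent construction of the strip model from the quiddity row (for instance, realizing sequences with all entries at least $2$ on an annulus and handling entries equal to $1$ by ear reduction, as in \cite{baurInfiniteFriezes}); once that is in place, your trace dichotomy does give a clean proof of the second sentence, which is an attractive feature of your framing.
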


Using the defining recurrence, we can see $T_k(2) = 2$ for all $k \geq 1$ and that either all growth coefficients of a frieze pattern are at least 2 or the frieze is not simultaneously positive, integral and infinite. Therefore,  one can replace the last condition of the Theorem with the condition that \emph{any} growth coefficient is 2.  

The second author and Chen investigated infinite frieze patterns from dissections of $P_n^\bullet$ and annuli.  They gave a description of frieze patterns from general dissections via a ``realizability algorithm'' on the quiddity row. 
If the frieze pattern is of type $\Lambda_{m+2}$, the result simplifies to an $(m+2)$-angulated version of \Cref{thm:CharacterizeInfiniteFriezePatterns}.

\begin{theorem}[\cite{banaian2021periodic}]
\label{thm:CharacterizeInfiniteFriezePatternsMAngulation}
All infinite, positive frieze patterns of type $\Lambda_{m+2}$ arise from an $(m+2)$-angulation of $P_n^\bullet$ or of an annulus. 
Moreover, an infinite frieze pattern arises from an $(m+2)$-angulation of $P_n^\bullet$ if and only if its principal growth coefficient is 2.
\end{theorem}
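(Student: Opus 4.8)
The plan is to prove the two assertions in turn: (a) every infinite positive frieze pattern $F$ of type $\Lambda_{m+2}$ equals $F_T$ for an $(m+2)$-angulation $T$ of a once-punctured polygon $P_n^\bullet$ or of an annulus, and (b) such an $F_T$ has principal growth coefficient $2$ precisely when $T$ lives on $P_n^\bullet$. For $m=1$, ``type $\Lambda_3$'' means positive integral and an $(m+2)$-angulation is a triangulation, so that case is exactly \Cref{thm:CharacterizeInfiniteFriezePatterns}; I would therefore assume $m\ge2$. The reverse inclusion needed in (a) is immediate from \Cref{defn:FriezePatternGeneralSurface}: on any $(m+2)$-angulation of $P_n^\bullet$ or of an annulus each quiddity entry is $\mu_{m+2}(\overline i)\lambda_{m+2}$ with $\mu_{m+2}(\overline i)\ge1$, so $F_T$ is positive of type $\Lambda_{m+2}$.

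For (a) I would adapt the reduction that drives the triangulation case \cite{baurInfiniteFriezes}. Write the (periodic) quiddity row of $F$ as $(c_i\lambda_{m+2})_{i\in\mathbb Z}$ with $c_i\in\mathbb Z_{>0}$, and call a block of $m$ consecutive indices all of whose $c$-values equal $1$ a \emph{reducible spot}; such a block must be the $m$ interior boundary vertices of an ``$(m+2)$-gon ear,'' and deleting the block while decrementing the two flanking $c$-values by $1$ produces a shorter periodic positive quiddity row, still of type $\Lambda_{m+2}$, corresponding to snipping off that ear. As for triangulations, positivity makes this process terminate at an irreducible row, and I would then classify the irreducible periodic positive type $\Lambda_{m+2}$ quiddity rows: each is the quiddity row either of a fan of spokes around a puncture on some $P_n^\bullet$, or of an $(m+2)$-angulation of an annulus; re-inflating the ears recovers $T$ with $F=F_T$. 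The genuinely new input beyond \cite{baurInfiniteFriezes} is why every face is an $(m+2)$-gon: each quiddity entry lies in $\lambda_{m+2}\mathbb Z_{>0}$ and also equals $\sum_{p\ge3}\mu_p(\overline i)\lambda_p$, so the $\mathbb Q$-linear independence of the relevant $\lambda_p$ --- the arithmetic already used to prove \Cref{thm:HJ} --- forces $\mu_p(\overline i)=0$ for $p\ne m+2$ at every boundary vertex, after which \Cref{thm:HJ} applied to each ordinary subpolygon cut off by a boundary-to-boundary arc propagates this to all faces.

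For (b) I would work with the monodromy matrix $M=\prod_{t=0}^{j-1}\left(\begin{smallmatrix}a_t&-1\\1&0\end{smallmatrix}\right)\in\mathrm{SL}_2$ of a minimal period $(a_0,\dots,a_{j-1})$ of the quiddity row; by \Cref{lem:Kontinuant} the entries of $F$ are continuants of the $a_t$, and via \Cref{thm:GrCo} one reads $s_1=|\operatorname{tr}M|$ (cf. \cite{baur2019growth}), so in particular $s_1=2$ iff $M$ is parabolic. If $T$ is an $(m+2)$-angulation of $P_n^\bullet$ with $\ell=n/m$ faces, I would induct on $\ell$: while the dual graph of $T$ has a leaf --- an ordinary ear --- removing it leaves $M$ unchanged on the nose, thanks to the exact identity $\left(\begin{smallmatrix}a&-1\\1&0\end{smallmatrix}\right)\left(\begin{smallmatrix}\lambda_{m+2}&-1\\1&0\end{smallmatrix}\right)^{m}\left(\begin{smallmatrix}b&-1\\1&0\end{smallmatrix}\right)=\left(\begin{smallmatrix}a-\lambda_{m+2}&-1\\1&0\end{smallmatrix}\right)\left(\begin{smallmatrix}b-\lambda_{m+2}&-1\\1&0\end{smallmatrix}\right)$, which reduces to $\left(\begin{smallmatrix}\lambda_{m+2}&-1\\1&0\end{smallmatrix}\right)^{m}=\left(\begin{smallmatrix}1&-\lambda_{m+2}\\\lambda_{m+2}&1-\lambda_{m+2}^{2}\end{smallmatrix}\right)$, i.e. to the Chebyshev evaluation $\sin\big((m{+}1)\pi/(m{+}2)\big)=\sin\big(\pi/(m{+}2)\big)$. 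When no ordinary ear remains the dual graph is a single cycle, so $T$ is a fan of spokes around the puncture; its period-$m$ monodromy is $\left(\begin{smallmatrix}2\lambda_{m+2}&-1\\1&0\end{smallmatrix}\right)\left(\begin{smallmatrix}\lambda_{m+2}&-1\\1&0\end{smallmatrix}\right)^{m-1}$, and the same identity gives $\operatorname{tr}M=2$ with $M\ne I$, hence $s_1=2$ for every $(m+2)$-angulation of $P_n^\bullet$. If instead $T$ is an $(m+2)$-angulation of an annulus, then $T$ contains an arc joining the two boundary components; peeling off ordinary ears (again preserving $M$) reduces to a minimal annular $(m+2)$-angulation, where I would compute $\operatorname{tr}M>2$ directly, so $M$ is hyperbolic and $s_1>2$. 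Combined with (a), this gives the stated ``if and only if.''

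I expect the main obstacle to sit entirely in the annular case. First, the classification of irreducible periodic positive type $\Lambda_{m+2}$ quiddity rows --- verifying the ear-removal process always terminates on exactly one of ``fan around a puncture'' or ``$(m+2)$-angulated annulus'' --- is the real adaptation of \cite{baurInfiniteFriezes} and needs careful bookkeeping, though it is routine once organized. Second, and more seriously, proving hyperbolicity $\operatorname{tr}M>2$ for an $(m+2)$-angulated annulus is the crux: the punctured disc forces a parabolic monodromy because of its ``folded'' spoke structure, but an annulus has no such feature, so one must produce the expanding eigenvalue attached to the core curve, either by the reduction to a minimal annular configuration suggested above or by invoking the geometric meaning of $s_1$. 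By contrast the punctured-disc half of (b) is clean: the exact ear-removal identity together with $\sin((m{+}1)\pi/(m{+}2))=\sin(\pi/(m{+}2))$ collapse it to a single trace computation.
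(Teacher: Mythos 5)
This statement is not proved in the paper at all: it is imported verbatim from \cite{banaian2021periodic}, where the hard content is a realizability algorithm on periodic quiddity rows for general dissections of $P_n^\bullet$ and annuli, later specialized to type $\Lambda_{m+2}$. So your sketch must stand on its own, and as written it has two genuine gaps. In part (a), the actual substance --- that the ear-reduction of a positive periodic type $\Lambda_{m+2}$ quiddity row terminates, and that every irreducible such row is realized exactly by a fan of spokes on some $P_n^\bullet$ or by an $(m+2)$-angulated annulus --- is asserted, not argued; that classification \emph{is} the cited theorem, not routine bookkeeping. Moreover your linear-independence step is circular where it is placed: the identity $F(i-1,i+1)=\sum_{p\ge 3}\mu_p(\overline{i})\lambda_p$ presupposes that $F=F_T$ for some dissection $T$ of a punctured disc or annulus, i.e. it presupposes the general realizability statement you are in the middle of proving; and $\mathbb{Q}$-linear independence of the family $\{\lambda_p\}$ is itself a delicate fact about cosines of rational angles (Conway--Jones type), not something that falls out of the arithmetic behind \Cref{thm:HJ}.

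In part (b), your two explicit computations are correct and check out: the ear-removal identity (equivalent to $\bigl(\begin{smallmatrix}\lambda_{m+2}&-1\\1&0\end{smallmatrix}\bigr)^m=\bigl(\begin{smallmatrix}1&-\lambda_{m+2}\\\lambda_{m+2}&1-\lambda_{m+2}^2\end{smallmatrix}\bigr)$) and the trace-$2$ computation for the fan of spokes. Granting the identification of the principal growth coefficient with the trace of the monodromy over a minimal period (which you use without proof but can legitimately cite from \cite{baur2019growth}), this yields only one implication: every $(m+2)$-angulation of $P_n^\bullet$ gives $s_1=2$. The converse --- that $s_1=2$ forces the punctured disc, which is exactly what the ``if'' direction needs once (a) is in place --- requires showing that no $(m+2)$-angulated annulus has parabolic monodromy, and you explicitly leave this step, which you yourself call the crux, unproved. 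Until the annular case is handled (for instance by peeling ears down to a minimal annular configuration and showing $\operatorname{tr}M>2$, or by importing the growth-coefficient dichotomy for annuli from \cite{baurInfiniteFriezes,baur2019growth}), the stated characterization is not established.
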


We conclude this section with a combinatorial interpretation of entries of a frieze pattern from a dissection which will be useful in future proofs. Given a polygon $P_n$ with dissection $T$, we say an \emph{(admissible) matching} between $v_i$ and $v_j$ is a sequence $Q_{i+1},\ldots,Q_{j-1}$ such that each $Q_k$ is a subgon incident to $v_k$ and such that each $p$-gon from the dissection appears on the list at most $p-2$ times. 

These matchings were introduced in the case of triangulations by Broline, Crowe, and Isaacs \cite{BCI}. They were extended to dissections by the second author and Chen in \cite{banaian2021periodic}, inspired by a construction by Bessenrodt \cite{Bessenrodt}. 
For general dissections, there is a weight, denoted $wt_T$, associated to each matching based on the number of times every subgon appears on the list. 
We do not need the details here, beyond the fact that $wt_T(Q_{i+1},\ldots,Q_{j-1}) \geq 1$ and in the triangulation case, $wt_T(Q_{i+1},\ldots,Q_{j-1}) = 1$ for all admissible sequences, recovering the work of \cite{BCI}. 

These constructions can be extended to frieze patterns from punctured polygons by lifting of $P_n^\bullet$ with dissection $T$ to a periodic dissection of an infinite strip. 
This was done in the triangulation case by Baur, Parsons, and Tschabold \cite{baurInfiniteFriezes} and in the dissection case in \cite{banaian2021periodic}.  
In this process, the vertex $v_i$ lifts to a family of vertices $\{\overline{v_{n\ell + i}}\}_{\ell \in \mathbb{Z}}$.
An arc of the form $(v_i,v_j)$ lifts to the family of arcs between lifts of $v_i$ and $v_j$. Meanwhile,
spokes lift to an infinite family of \emph{asymptotic arcs}, which have one endpoint and travel infinitely far in one direction in the disc. An example is given in \Cref{ex:BCI}.

We summarize all of these results in the following, focusing only on the cases we will use. 

\begin{theorem}
[\cite{banaian2021periodic,baurInfiniteFriezes,BCI}]\label{thm:CombinatorialInterpretation}~
\begin{enumerate}
    \item If $F_T$ is the finite frieze pattern from a dissection $T$ of a polygon $P_n$, then $F_T(i,j)$ is the weighted sum, under $wt_T$, of admissible matchings between $v_{\overline{i}}$ and $v_{\overline{j}}$. 
    \item If $F_T$ is the infinite frieze pattern from a dissection $T$ of a once-punctured polygon $P_n^\bullet$, then  $F_T(i,j)$ is the weighted sum, under $wt_T$, of admissible matchings between $\overline{v_i}$ and $\overline{v_j}$ in the lift of $(P_n^\bullet, T)$ to the infinite strip. 
\end{enumerate}
\end{theorem}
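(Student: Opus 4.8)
The plan is to prove both parts by the classical Broline--Crowe--Isaacs strategy, adapted to general dissections as in \cite{banaian2021periodic}: show that the function $M(i,j)$ given by the right-hand side --- the $wt_T$-weighted count of admissible matchings --- coincides with $F_T(i,j)$. A frieze pattern is determined by its quiddity row via the unimodular rule together with \Cref{lem:Kontinuant}, which expresses every entry as the continuant $K_{j-i-1}$ in the quiddity entries $q_k := \sum_{p\geq 3}\mu_p(\overline{k})\lambda_p$. So it suffices to (a) check that $M$ agrees with $F_T$ on the trivial rows and on the quiddity row, and (b) show that $M(i,j)$ obeys the continuant recursion in the $q_k$. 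The only genuine work is (b).

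For (a): the empty sequence is the unique admissible matching between $v_i$ and $v_{i+1}$, with weight $1$, so $M(i,i+1)=1$; and the admissible matchings between $v_{i-1}$ and $v_{i+1}$ are precisely the single subgons $Q_i$ incident to $v_i$, each $p$-gon of $T$ at $v_i$ contributing a factor $\lambda_p$ under $wt_T$ (for triangulations $\lambda_3=1$, so this is a plain count), whence $M(i-1,i+1)=q_i=F_T(i-1,i+1)$. For the boundary rows one uses that $T$ dissects $P_n$, so $\sum_Q (p_Q-2)=n-2$: there is then no admissible matching of length $n-1$, giving $M(i,i)=M(i,n+i)=0$, and a unique one of length $n-2$ --- the maximal matching using each $p$-gon its full $p-2$ times --- whose $wt_T$-weight is $1$, giving $M(i,n+i-1)=1$. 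This is exactly condition (1) of \Cref{def:FriezePattern}.

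For (b) I would induct on the number of subgons of $T$; the base case is the trivial dissection, for which both sides equal $K_{j-i-1}(\lambda_n,\dots,\lambda_n)$. If $T$ has a diagonal, its dual tree has a leaf: a $p$-gon $Q$ on consecutive vertices $v_a,\dots,v_{a+p-1}$ whose only diagonal of $T$ is $(v_a,v_{a+p-1})$. Deleting $Q$ and the vertices $v_{a+1},\dots,v_{a+p-2}$ yields a dissection $T'$ of a smaller polygon whose quiddity entries are those of $T$ except that $q_a$ and $q_{a+p-1}$ each drop by $\lambda_p$, the deleted positions having carried the constant value $\lambda_p$. Every admissible matching of $T$ is built from one of $T'$ by inserting a block of copies of $Q$ across positions $a+1,\dots,a+p-2$ (with $Q$ possibly appearing also at the two flanking positions), and --- because under $wt_T$ a $p$-gon appearing $t$ times contributes (concretely) the Chebyshev factor $U_t(\cos(\pi/p))$, so that $t=1$ gives $\lambda_p$ and $t=p-2$ gives $U_{p-2}(\cos(\pi/p))=1$ --- the weight of a $T$-matching factors as (its $T'$-weight)\,$\times$\,(an explicit local factor). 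Collapsing the string of equal quiddity entries $\lambda_p$ inside $K_{j-i-1}$ via the matching Chebyshev identities then reduces the continuant recursion for $F_T$ to that for $F_{T'}$ and matches the combinatorial recursion for $M$, closing the induction. Verifying that the local weight factor is \emph{exactly} the one the continuant collapse demands is the one delicate bookkeeping step, and is precisely the realizability computation of \cite{banaian2021periodic}; I expect this to be the main obstacle.

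For Part (2) I would transport this to the universal cover. Lift $(P_n^\bullet,T)$ to the $\mathbb{Z}$-periodic cellular dissection $\widetilde{T}$ of the infinite strip, under which $v_i$ becomes $\{\overline{v_{n\ell+i}}\}_{\ell\in\mathbb{Z}}$ and each spoke becomes a family of asymptotic arcs; the quiddity row that \Cref{defn:FriezePatternGeneralSurface} reads off from the neighborhoods $h_i$ is by construction the periodic quiddity row of $\widetilde{T}$, so $F_T$ is the infinite frieze of $\widetilde{T}$. For fixed $i<j$ only finitely many subgons of $\widetilde{T}$ lie between $\overline{v_i}$ and $\overline{v_j}$, so the leaf-peeling induction of part (b) and the weight bookkeeping carry over verbatim; since the strip has no wrap-around, condition $(1')$ and recursion (2) hold for all $i<j$, and $M=F_T$ propagates via \Cref{lem:Kontinuant}. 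This is the argument of \cite{baurInfiniteFriezes} for triangulations and of \cite{banaian2021periodic} in general.
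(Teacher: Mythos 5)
The paper does not actually prove Theorem~\ref{thm:CombinatorialInterpretation}: it is stated as a summary of results imported from \cite{BCI}, \cite{baurInfiniteFriezes}, and \cite{banaian2021periodic}, so there is no in-paper argument to compare yours against. Your sketch follows the route of those original proofs (verify the first rows and the quiddity row, then an ear/leaf-peeling induction on the dissection, transported to the periodic strip in the punctured case), and your vanishing argument for $M(i,i)=M(i,n+i)=0$ via $\sum_Q(p_Q-2)=n-2$ is fine.

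As a standalone proof, however, it has genuine gaps, and they sit exactly where the real content lies. First, $wt_T$ is never defined in this paper, so your assertion that a $p$-gon used $t$ times contributes the factor $U_t(\cos(\pi/p))$, multiplicatively over subgons, cannot be taken as given: pinning down that weight and showing the weighted matching counts satisfy the three-term diagonal recursion in the quiddity entries (a signed identity, not an obviously positive one) is essentially the theorem itself, and your ``continuant collapse via Chebyshev identities'' step --- which you yourself flag as the main obstacle and outsource to \cite{banaian2021periodic} --- is precisely the missing verification. Second, the claim that the length-$(n-2)$ matching exists, is unique, and has weight $1$ (needed for the bottom row of $1$'s, hence for the width statement) is asserted rather than proved; for triangulations this is a lemma of \cite{BCI}, and for general dissections it again depends on the weight bookkeeping. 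Third, in part (2) the lifted dissection contains unbounded subgons coming from spokes (the regions $\delta_i$ in Example~\ref{ex:BCI}), so ``each $p$-gon appears at most $p-2$ times'' does not literally apply to them; you must specify admissibility for these asymptotic regions before your finite-polygon reduction and induction can run. In short: correct strategy, consistent with spot checks such as the mixed dissection in Example~\ref{ex:BijectionsToFPs}, but the decisive weight and admissibility bookkeeping is deferred to the cited literature rather than carried out, so the proposal is an outline of the known proofs rather than a proof.
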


\begin{example}\label{ex:BCI}
Here, we illustrate both parts of \Cref{thm:CombinatorialInterpretation}. Below, we reproduce the triangulation from \Cref{ex:BijectionsToFPs}, with vertices and subgons labeled. Using this indexing, we have $F(1,4) = 5$, and 5 is also the cardinality of the admissible matchings between $v_1$ and $v_4$. These are $\alpha,\mu; \alpha,\delta; \beta,\mu; \beta,\delta;$ and  $\mu,\delta$.

\begin{center}
\begin{tikzpicture}[scale=0.5]
    \newdimen\R
    \R=2.7cm
    \draw(0:\R) node[right]{$v_2$} -- (60:\R)node[above]{$v_1$} -- (120:\R)  node[above]{$v_0$}-- (180:\R) node[left]{$v_5$} -- (240:\R) node[below]{$v_4$}-- (300:\R) node[below]{$v_3$}-- (360:\R);
    \draw(-60:\R) -- (180:\R) -- (0:\R) -- (120:\R);
    \node[scale=0.75] at (120:.5\R){$\beta$};
    \node[scale=0.75] at (60:.8\R){$\alpha$};
    \node[scale=0.75] at (300:.5\R){$\mu$};
    \node[scale=0.75] at (240:.8\R){$\delta$};
    \draw[fill=black] (0:\R) circle (.12cm);
\draw[fill=black] (60:\R) circle (.12cm);
\draw[fill=black] (120:\R) circle (.12cm);
\draw[fill=black] (180:\R) circle (.12cm);
\draw[fill=black] (240:\R) circle (.12cm);
\draw[fill=black] (300:\R) circle (.12cm);
\end{tikzpicture}
\end{center}

Next, we consider the triangulation $T$ of $P^\bullet_6$ in \Cref{ex:InfiniteFriezePatterns}. Below, we draw its lift to the universal strip and we again label the subgons. The subgons labeled $\delta_i$ have one endpoint at $+\infty$. With this indexing, $F_T(3,9) = 5$, which is the cardinality of admissible matchings between $\overline{v_4}$ and $\overline{v_8}$. These are $\eta_0,\mu_0,\alpha_1; \eta_0,\delta_0,\alpha_1; \eta_0,\delta_1,\alpha_1; \eta_0,\mu_1,\alpha_1; $ and $\eta_0,\beta_1,\alpha_1$. For another example, we have $F_T(0,4)=1$, and there is a unique admissible matching between $\overline{v_0}$ and $\overline{v_4}$, namely, $\alpha_0,\beta_0,\epsilon_0$.

\begin{center}
\begin{tikzpicture}[scale=0.7]
\draw[very thick] (0,0) -- (20,0);
\draw[very thick] (0,4) -- (20,4);
\node[above] at (1,4){$\overline{v_0}$};
\node[above] at (3,4){$\overline{v_1}$};
\node[circle, fill = black, scale = 0.3] at (3,4){};
\node[above] at (5,4){$\overline{v_2}$};
\node[above] at (7,4){$\overline{v_3}$};
\node[circle, fill = black, scale = 0.3] at (7,4){};
\node[above] at (9,4){$\overline{v_4}$};
\node[above] at (11,4){$\overline{v_5}$};
\node[circle, fill = black, scale = 0.3] at (11,4){};
\node[above] at (13,4){$\overline{v_6}$};
\node[above] at (15,4){$\overline{v_7}$};
\node[circle, fill = black, scale = 0.3] at (15,4){};
\node[above] at (17,4){$\overline{v_8}$};
\node[above] at (19,4){$\overline{v_9}$};
\node[circle, fill = black, scale = 0.3] at (9,4){};
\draw(1,4) -- (1,1) -- (20,1);
\draw(13,4) -- (13,1.3) -- (20,1.3);
\draw(1,4) to [out = -75, in = 270-15, looseness = 0.75] (13,4);
\draw(5,4) to [out = -60, in = 270-30] (9,4);
\draw(9,4) to [out = -60, in = 270-30] (13,4);
\draw(13,4) to [out = -60, in = 270-30] (17,4);
\draw(1,4) to [out = -60, in = 270-30] (5,4);
\draw(1,4) to [out = -65, in = 270-25] (9,4);
\draw(13,4) to [out = -60, in = 230] (20,3.3);
\draw(13,4) to [out = -75, in = 190] (20,2);
\draw(17,4) to [out = -60, in = 240] (20,3.6);
\draw(0,3) to [out = 30, in = 240](1,4);
\draw(0,2.5) to [out = 45, in = 255](1,4);
\node[scale=0.8] at (3,3.5){$\alpha_0$};
\node[scale=0.8] at (5,2.8){$\beta_0$};
\node[scale=0.8] at (9,2.5){$\mu_0$};
\node[scale=0.8] at (2,2){$\delta_0$};
\node[scale=0.8] at (7,3.5){$\epsilon_0$};
\node[scale=0.8] at (11,3.5){$\eta_0$};
\node[scale=0.8] at (15,3.5){$\alpha_1$};
\node[scale=0.8] at (17,2.8){$\beta_1$};
\node[scale=0.8] at (19.5,2.25){$\mu_1$};
\node[scale=0.8] at (19,3.5){$\epsilon_1$};
\node[scale=0.8] at (14,2){$\delta_1$};
\end{tikzpicture}
\end{center}
\end{example}

\subsection{Counting Frieze Patterns up to Symmetry}
\label{subsec: counting frieze patterns up to symmetry}

Our goal in this section is to enumerate frieze patterns in several ways. In particular, we will enumerate finite frieze patterns up to global row shift, and we will enumerate certain infinite frieze patterns. This will use an alternate description of the cyclic sieving phenomenon. The equivalence of the following and \Cref{def:CSP} is shown in \cite{reiner2004cyclic}.

\begin{proposition}[\cite{reiner2004cyclic}]\label{prop:CSPVersion2}
A triple $(X,G_n = \langle \sigma \rangle,f(q))$ exhibits the cyclic-sieving phenomenon if and only if each of the coefficients $g_i$ determined by \[
X(q) \equiv \sum_{i=0}^{n-1} g_i q^i \mod{q^n - 1}
\]
are equal to the number of $G$-orbits on $X$ for which the stabilizer-order divides $i$.
\end{proposition}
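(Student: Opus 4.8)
The plan is to reduce the claimed equivalence to a single identity between two integer vectors and then verify it by a short computation combining invertibility of the discrete Fourier transform with the orbit--stabilizer theorem. Write $f_k := |\mathrm{Fix}(\sigma^k)|$ for $0 \le k \le n-1$. Since each $\zeta_n^k$ is a root of $q^n-1$, the defining congruence $X(q)\equiv\sum_{i=0}^{n-1}g_iq^i\pmod{q^n-1}$ yields $X(\zeta_n^k)=\sum_{i=0}^{n-1}g_i\zeta_n^{ik}$ for every $k$. The matrix $(\zeta_n^{ik})_{0\le i,k\le n-1}$ is Vandermonde in the $n$ distinct numbers $\zeta_n^0,\dots,\zeta_n^{n-1}$, hence invertible with inverse $\tfrac1n(\zeta_n^{-ik})$; therefore the condition of \Cref{def:CSP}, that $X(\zeta_n^k)=f_k$ for all $k$, is equivalent to $g_i=\tfrac1n\sum_{k=0}^{n-1}f_k\zeta_n^{-ik}$ for all $i$. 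So it suffices to show this quantity equals the number of $G_n$-orbits whose stabilizer order divides $i$.

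Next I would unwind $f_k$ using the orbit decomposition of $X$. Because $G_n$ is cyclic, all points of a given orbit $\mathcal{O}$ share one stabilizer, which is the unique subgroup $\langle\sigma^{n/d}\rangle$ of some order $d=d(\mathcal{O})$ dividing $n$, and $|\mathcal{O}|=n/d$. A point of $\mathcal{O}$ is fixed by $\sigma^k$ exactly when $\sigma^k$ lies in that subgroup, i.e. when $(n/d)\mid k$, in which case all of $\mathcal{O}$ is fixed; hence $f_k=\sum_{\mathcal{O}:\,(n/d(\mathcal{O}))\mid k}\tfrac{n}{d(\mathcal{O})}$. Substituting into $\tfrac1n\sum_k f_k\zeta_n^{-ik}$ and exchanging the order of summation gives $\sum_{\mathcal{O}}\tfrac{1}{d(\mathcal{O})}\sum_{j=0}^{d(\mathcal{O})-1}\zeta_{d(\mathcal{O})}^{-ij}$, where I write each multiple of $n/d$ below $n$ as $k=(n/d)j$ and use $\zeta_n^{n/d}=\zeta_d$. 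By orthogonality the inner sum equals $d(\mathcal{O})$ when $d(\mathcal{O})\mid i$ and $0$ otherwise, so the whole expression collapses to $\#\{\mathcal{O}:d(\mathcal{O})\mid i\}$. This proves one direction, and since each step is an equivalence, reading the chain backwards gives the converse.

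I do not expect a genuine obstacle here: the identity is essentially Burnside's lemma refined by congruence classes, and at $i=0$ it recovers the familiar count of orbits as $\tfrac1n\sum_k f_k$. The only place needing attention is the bookkeeping in the last display --- checking that the reindexing $k=(n/d)j$ enumerates exactly the multiples of $n/d$ in $\{0,\dots,n-1\}$, that $\zeta_d:=\zeta_n^{n/d}$ is a primitive $d$-th root of unity since $\zeta_n$ is primitive, and that $\sum_{j=0}^{d-1}\zeta_d^{-ij}$ is evaluated with the correct modulus $d$.
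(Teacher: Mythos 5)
Your proposal is correct. Note that the paper does not prove this statement at all---it simply cites \cite{reiner2004cyclic}---so there is no internal proof to compare against; your argument (evaluate the congruence at the roots of $q^n-1$, invert the DFT matrix, compute $\tfrac1n\sum_k f_k\zeta_n^{-ik}$ via orbit--stabilizer and root-of-unity orthogonality) is exactly the standard proof of this equivalence, essentially the one given by Reiner, Stanton, and White, and all the delicate points you flag (the reindexing $k=(n/d)j$, primitivity of $\zeta_n^{n/d}$ as a $d$-th root of unity, and the fact that the middle quantity equals the orbit count unconditionally, so the whole chain is an equivalence) check out.
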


Now, once we show that the operations of rotating a dissection $T$ and shifting all rows in $F_T$ agree, we can use \Cref{prop:CSPVersion2} to enumerate equivalence classes of frieze patterns. In the case of frieze patterns of type $\Lambda_{m+2}$, this largely follows from combining existing literature.


\begin{lemma}
\label{lem:ShiftAndRotate}
If $T$ and $T'$ are two dissections of $P_n$ or $P_n^\bullet$ where $T'$ is the result of rotating $T$ by $2\pi/n$ in the counterclockwise direction, then for all $0 \leq j-i \leq n$, $F_{T'}(i,j) = F_T(i+1,j+1)$.
\end{lemma}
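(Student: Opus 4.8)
The plan is to show that the frieze patterns $F_{T'}$ and the ``shift'' of $F_T$ agree on their quiddity rows, and then invoke the fact (Lemma \ref{lem:Kontinuant}, or equivalently \Cref{def:FriezePattern}(2)) that a frieze pattern is completely determined by its quiddity row. So the real content is a statement about one row, and the rest is bookkeeping.

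First I would set up notation carefully. Write $\rott$ for the counterclockwise rotation by $2\pi/n$, so that on vertex labels $\rott(v_i) = v_{i-1}$ as defined in \Cref{sec: dissections of polygons}, and $T' = \rott(T)$ means $T'$ is the image of the arc set $T$ under $\rott$. The key combinatorial observation is that a subgon $Q$ of $P_n\setminus T$ is incident to $v_j$ if and only if its image $\rott(Q)$, a subgon of $P_n\setminus T'$, is incident to $\rott(v_j) = v_{j-1}$; moreover $\rott$ preserves the size of a subgon. Translating this into the multiplicities from \Cref{def:FriezePatternsFromDissections}: if $\mu_p(j)$ counts $p$-gons of $T$ incident to $v_j$ and $\mu'_p(j)$ counts $p$-gons of $T'$ incident to $v_j$, then $\mu'_p(j) = \mu_p(j+1)$ for every $p \geq 3$ and every $j$ (indices mod $n$). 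In the punctured case $P_n^\bullet$ one uses \Cref{defn:FriezePatternGeneralSurface} instead, with the horocycle $h_i$: rotation carries $h_j$ to $h_{j-1}$ and carries the local picture around $v_j$ to the local picture around $v_{j-1}$, including the multiplicity counting for folded subgons, so again $\mu'_p(j) = \mu_p(j+1)$.

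Next I would read off the quiddity rows. By \Cref{def:FriezePatternsFromDissections} (resp. \Cref{defn:FriezePatternGeneralSurface}),
\[
F_{T'}(i-1,i+1) = \sum_{p\geq 3}\mu'_p(\overline{i})\,\lambda_p = \sum_{p\geq 3}\mu_p(\overline{i+1})\,\lambda_p = F_T(i,i+2),
\]
which says exactly that the quiddity entry of $F_{T'}$ at column $i$ equals the quiddity entry of $F_T$ at column $i+1$; i.e. $F_{T'}(i,i+2) = F_T(i+1,i+3)$ for all $i$. Now invoke \Cref{lem:Kontinuant}: for $i<j$,
\[
F_{T'}(i,j) = K_{j-i-1}\bigl(F_{T'}(i,i+2),\ldots,F_{T'}(j-2,j)\bigr) = K_{j-i-1}\bigl(F_T(i+1,i+3),\ldots,F_T(j-1,j+1)\bigr) = F_T(i+1,j+1).
\]
For the boundary cases $j-i \in \{0,1,n-1,n\}$ both sides equal $0,1,1,0$ respectively by \Cref{def:FriezePattern}(1) (or (1') together with periodicity in the infinite case), so the identity $F_{T'}(i,j) = F_T(i+1,j+1)$ holds on the whole domain $0\leq j-i\leq n$. (In the infinite case one only has the boundary conditions $F(i,i)=0$, $F(i,i+1)=1$, but that suffices for the continuant argument since the quiddity row still determines all entries; one should just be slightly careful to note \Cref{lem:Kontinuant} applies to infinite frieze patterns as well, which it does since its proof only uses the unimodular rule.)

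The main obstacle, such as it is, is purely one of care rather than depth: making sure the index shift goes the right way (counterclockwise rotation sends $v_i \mapsto v_{i-1}$, so incidences at $v_j$ in $T'$ come from incidences at $v_{j+1}$ in $T$, producing a shift of $F_T$ to the \emph{left} when read as an array, i.e. $F_{T'}(i,j) = F_T(i+1,j+1)$), and checking that the multiplicity-with-multiplicity convention for folded subgons in the punctured-polygon case is genuinely rotation-equivariant. The latter is immediate once one observes that $\rott$ is a homeomorphism of $P_n^\bullet$ fixing the puncture and carrying the dissection $T$ to $T'$, hence carrying the horocyclic segment $h_j$ and every local intersection datum at $v_j$ bijectively to that at $v_{j-1}$. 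No delicate estimate or case analysis beyond this is required.
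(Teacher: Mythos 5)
Your proposal is correct and follows essentially the same route as the paper's own (much terser) proof: verify the quiddity rows of $F_{T'}$ and the shifted $F_T$ agree via the rotation of incidences, handle the trivial boundary rows directly, and propagate to all entries with the continuant identity of \Cref{lem:Kontinuant}. Your added care about the direction of the index shift and the rotation-equivariance of the multiplicity convention in the punctured case is consistent with what the paper leaves implicit.
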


\begin{proof}
The statement is trivial for the first two rows, and by construction of $F_T$ and $F_{T'}$ it is clear for the quiddity row. Now, if $j-i > 2$, this can be shown with \Cref{lem:Kontinuant}.
\end{proof}

Conway and Coxeter showed that the rows in a frieze pattern of width $n$ must be $n$-periodic. Hence, global row shift is a cyclic action on the set of frieze patterns of fixed width, and one can easily export any cyclic sieving phenomenon from dissections of $P_n$ to frieze patterns of width $n$. 

We will now turn our attention to utilizing the cyclic sieving phenomenon to enumerate equivalence classes of frieze patterns.
In the following, if $n = m\ell + 2$, let $\widetilde{c}_\ell^{(m)}(q)$ denote the degree $n-1$ polynomial which is equivalent to $c_\ell^{(m)}(q)$ modulo $q^n - 1$.

\begin{proposition}
\label{thm:num of equiv classes of friezes of type lambda}
Let $m$ be a positive integer and let $n = m\ell + 2$.  
The number of equivalence classes of positive frieze patterns of type $\Lambda_{m+2}$ and of width $n$ up to global row shift is the constant term of $\widetilde{c}_\ell^{(m)}(q)$.  
\end{proposition}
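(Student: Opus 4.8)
The plan is to push the cyclic sieving phenomenon of \Cref{thm:CSPAmu} through the Holm--J{\o}rgensen bijection and then extract the orbit count using \Cref{prop:CSPVersion2}. Fix $\mu = (0,\ldots,0,\ell,0,\ldots,0)$ with the single nonzero entry $\mu_m = \ell$. In the notation of \Cref{sec: dissections of polygons} we then have $\sum_i i\mu_i = m\ell = n-2$, so $\mathcal{A}_\mu$ is exactly the set of $(m+2)$-angulations of $P_n$ (and $G_{n}$ is the relevant rotation group), while $a_\mu(q) = c_\ell^{(m)}(q)$. By \Cref{thm:HJ}, the assignment $T \mapsto F_T$ is a bijection between $\mathcal{A}_\mu$ and the set of (positive) frieze patterns of type $\Lambda_{m+2}$ and width $n$.

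Next I would verify that this bijection is equivariant: the rotation action of $G_n = \langle \sigma \rangle$ on $\mathcal{A}_\mu$ corresponds to the global-row-shift action on frieze patterns. The row-shift operation is a genuine action of $\mathbb{Z}/n$ on the set of width-$n$ frieze patterns because Conway and Coxeter showed every row of such a frieze pattern is $n$-periodic; it sends frieze patterns to frieze patterns since the unimodular rule is invariant under shifting, and it preserves type $\Lambda_{m+2}$ since it merely cyclically permutes the quiddity row. By \Cref{lem:ShiftAndRotate}, rotating $T$ by $2\pi/n$ corresponds under $T \mapsto F_T$ precisely to shifting each row of $F_T$ by one. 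Hence $T \mapsto F_T$ is an isomorphism of $\mathbb{Z}/n$-sets, and in particular the equivalence classes of frieze patterns of type $\Lambda_{m+2}$ and width $n$ up to global row shift are in bijection with the $G_n$-orbits on $\mathcal{A}_\mu$.

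It then remains to count those orbits. By \Cref{thm:CSPAmu}, the triple $(\mathcal{A}_\mu, G_n, c_\ell^{(m)}(q))$ exhibits the cyclic sieving phenomenon. Applying \Cref{prop:CSPVersion2}, if we write $c_\ell^{(m)}(q) \equiv \sum_{i=0}^{n-1} g_i q^i \pmod{q^n - 1}$, then $g_i$ equals the number of $G_n$-orbits on $\mathcal{A}_\mu$ whose stabilizer order divides $i$. Since every positive integer divides $0$, the condition in the case $i = 0$ is vacuous, so $g_0$ is the total number of $G_n$-orbits on $\mathcal{A}_\mu$. Because $\widetilde{c}_\ell^{(m)}(q) = \sum_{i=0}^{n-1} g_i q^i$ by definition, $g_0$ is exactly the constant term of $\widetilde{c}_\ell^{(m)}(q)$, and combining with the previous paragraph yields the proposition.

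The argument is essentially bookkeeping once \Cref{thm:CSPAmu}, \Cref{thm:HJ}, and \Cref{lem:ShiftAndRotate} are in hand, so there is no real obstacle. The only points that warrant a sentence of care are the standard observation that ``stabilizer order divides $0$'' holds automatically (so that the constant coefficient of the reduced polynomial counts \emph{all} orbits, not only the free ones), and the remark that \Cref{lem:ShiftAndRotate} is stated for a single $2\pi/n$ rotation --- which suffices, since both cyclic actions are generated by that one step.
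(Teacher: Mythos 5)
Your proof is correct, and it differs from the paper's in one substantive choice: the source of the cyclic sieving input. The paper's proof has the same skeleton you use --- pass a CSP through the dissection--frieze bijection via \Cref{lem:ShiftAndRotate} and extract the orbit count from \Cref{prop:CSPVersion2} --- but it cites external CSP results, namely \cite[Theorem 7.1]{reiner2004cyclic} for $m=1$ together with \Cref{thm:CCFrieze}, and \cite[Theorem 3.8]{eu2008cyclic} for $m>1$ together with \Cref{thm:HJ}. You instead specialize the paper's own \Cref{thm:CSPAmu} to $\mu=(0,\ldots,0,\ell,0,\ldots,0)$ with $\mu_m=\ell$, observing that then $\mathcal{A}_\mu$ is the set of $(m+2)$-angulations of $P_n$, the acting group is $G_n$, and $a_\mu(q)=c_\ell^{(m)}(q)$ since the $q$-multinomial factor is trivial. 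Your route is uniform in $m$ (no case split), self-contained within the paper, and sidesteps having to check that the cyclic actions in the cited works agree with polygon rotation --- a point that genuinely requires care, as the paper itself notes in the punctured setting that Eu--Fu's action differs from rotation. What the paper's route buys is independence of this proposition from \Cref{thm:CSPAmu} and an explicit link to the established literature. Your additional remarks --- that row shift is a genuine $\mathbb{Z}/n$-action because rows are $n$-periodic, that \Cref{lem:ShiftAndRotate} for a single rotation step suffices for equivariance, and that the $i=0$ coefficient in \Cref{prop:CSPVersion2} counts all orbits since every stabilizer order divides $0$ --- are exactly the details the paper leaves implicit, and they are handled correctly.
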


\begin{proof}
First, let $m = 1$.  By \cite[Theorem 7.1]{reiner2004cyclic}, the triple $(\mathcal{T}_n, G_n, c_{n-2}(q))$ exhibits the cyclic sieving phenomenon where $\mathcal{T}_n$ is the set of triangulations of $P_n$. By \Cref{prop:CSPVersion2}, this means that the constant term of $\widetilde{c}^{(1)}_{n-2}(q) \pmod{q^n - 1}$ is equal to the total number of cyclic equivalence classes of triangulations of $P_n$. Then,  \Cref{thm:CCFrieze} and \Cref{lem:ShiftAndRotate} imply that these equivalence classes are in bijection with equivalence classes of positive, integral frieze patterns of width $n$ up to global row shift.

If $m > 1$, the same logic holds, applying instead the cyclic sieving result in \cite[Theorem 3.8]{eu2008cyclic} and the bijection in \Cref{thm:HJ}. 
\end{proof}

\begin{remark}
A triangulation $T$ of $P_n$ determines a cluster in a type $A_{n-3}$ cluster algebra, $\mathcal{A}$, as well as a cluster-tilting object $S_T$ in the associated cluster category, $\mathcal{C}_{\mathcal{A}}$ \cite{AnCategorication}. One can go from the  $\mathcal{C}_{\mathcal{A}}$ to $\mathcal{A}$ using a \emph{cluster character map}, as was first defined in \cite{caldero2006cluster}. Palu described how to build a cluster character map for any given cluster-tilting object \cite{Palu}. 

It is common to draw the \emph{Auslander-Reiten (AR) quiver} of $\mathcal{C}_{\mathcal{A}}$ which has vertices labeled by indecomposable objects and which has the same overall shape as the frieze pattern $F_T$. Indeed, one can view $F_T$ as the result of applying the cluster character map associated to $S_T$ to each vertex/object in the AR quiver, yielding cluster variables (in particular, Laurent polynomials), then further specializing the initial variables in each expression to 1, yielding integers. 

Rotation of $P_n$ also has a meaning in this setting; it coincides with \emph{Auslander-Reiten (AR) translation}, an important and prominent functor on $\mathcal{C}_{\mathcal{A}}$. Therefore, studying equivalence classes of positive integral, finite frieze patterns under rotation/global row shift is equivalent to studying equivalence classes of cluster character maps associated to cluster tilting objects under AR translation.

This remark does not hold for the positive integral, infinite frieze patterns considered here as our action does not coincide with AR translation anymore. See \Cref{prop:CSPPuncturedPolygonTriangulationsOnly}.
\end{remark}

Next,  fix a positive integer $n$ and a finite set $\mathcal{M}= \{m_1,\ldots,m_s\} \subset \mathbb{Z}_{> 0}$.  
Let $a_{\mathcal{M}}(q) := \sum_\mu a_\mu(q)$ where we sum over all $\mu = (\mu_1,\ldots,\mu_n) \in \mathbb{Z}^n$ such that $\sum i \mu_i = n-2$ and $\mu_i > 0$ only if $i \in \mathcal{M}$. 
By construction, $a_{\mathcal{M}}(1)$ is the number of dissections of an $n$-gon which only use subgons with sizes from  $\mathcal{M}$. Let $\widetilde{a_{\mathcal{M}}}(q)$ denote the degree $n-1$ polynomial which is equivalent to $a_{\mathcal{M}}(q)$ modulo $q^n - 1$.

\begin{proposition}
Fix a positive integer $n$ and a finite set $\mathcal{M}= \{m_1,\ldots,m_s\} \subset \mathbb{Z}_{>0}$. 
The number of equivalence classes of frieze patterns from dissections of width $n$ and with values in $\mathbb{Z}[\lambda_{m_1+2},\ldots,\lambda_{m_s+2}]$ up to global shift of rows is the constant term of $\widetilde{a_{\mathcal{M}}}(q)$.
\end{proposition}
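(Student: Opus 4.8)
The plan is to imitate the proof of \Cref{thm:num of equiv classes of friezes of type lambda}, but with the finer cyclic sieving statement \Cref{thm:CSPAmu} summed over types in place of the cyclic sieving on all triangulations. First I would observe that $a_{\mathcal{M}}(q)$ is itself a cyclic sieving polynomial. Let $\mathcal{A}_{\mathcal{M}}$ denote the set of all dissections of the polygon at hand whose subgons all have size in $\{m_1+2,\dots,m_s+2\}$. A rotation carries a subgon to a subgon of the same size, so each $\mathcal{A}_\mu$ is preserved by the rotation group $G$ and $\mathcal{A}_{\mathcal{M}} = \bigsqcup_\mu \mathcal{A}_\mu$ is a decomposition into cyclic subsets, the union running over exactly the $\mu$ appearing in the definition of $a_{\mathcal{M}}(q)$. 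Cyclic sieving is additive under disjoint unions, the polynomials adding: if $X_i(q)$ witnesses cyclic sieving for $X_i$, then $\sum_i X_i(q)$ witnesses it for $\bigsqcup_i X_i$, since both the evaluations at roots of unity and the numbers of $\sigma^k$-fixed points add over the pieces. Hence \Cref{thm:CSPAmu} gives that $(\mathcal{A}_{\mathcal{M}}, G, a_{\mathcal{M}}(q))$ exhibits the cyclic sieving phenomenon.

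Next I would extract the orbit count from the second formulation of cyclic sieving. Applying \Cref{prop:CSPVersion2} to $(\mathcal{A}_{\mathcal{M}}, G, a_{\mathcal{M}}(q))$ and looking at the constant coefficient (the case $i=0$): every positive integer divides $0$, so the condition that the stabilizer-order divide $0$ holds for every $G$-orbit, and therefore the constant term of $\widetilde{a_{\mathcal{M}}}(q)$ equals the total number of $G$-orbits on $\mathcal{A}_{\mathcal{M}}$ — that is, the number of rotation-equivalence classes of dissections whose subgons have sizes in $\{m_1+2,\dots,m_s+2\}$.

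Finally I would transport this count along $T \mapsto F_T$. By \Cref{def:FriezePatternsFromDissections} such a dissection yields a frieze pattern with entries in $\mathbb{Z}[\lambda_{m_1+2},\dots,\lambda_{m_s+2}]$, and by \Cref{lem:ShiftAndRotate} the assignment $T\mapsto F_T$ intertwines rotation of dissections with the global row-shift of friezes; so it descends to a surjection from rotation-classes of dissections in $\mathcal{A}_{\mathcal{M}}$ onto row-shift-classes of frieze patterns from dissections with entries in $\mathbb{Z}[\lambda_{m_1+2},\dots,\lambda_{m_s+2}]$. The step I expect to be the crux is that this surjection is injective — equivalently, that $F_T$ determines $T$ among dissections, since then \Cref{lem:ShiftAndRotate} promotes this to: $F_T$ a row-shift of $F_{T'}$ forces $T$ and $T'$ into one rotation orbit. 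For this I would use that $F_T$ is reconstructed from its quiddity row by \Cref{lem:Kontinuant}, whose $v_j$-entry is $\sum_p \mu_p(\overline{j})\lambda_p$; assuming $1,\lambda_{m_1+2},\dots,\lambda_{m_s+2}$ are linearly independent over $\mathbb{Q}$ — which also shows a frieze with entries in this ring can only arise from a dissection with subgon sizes in $\{m_i+2\}$, so that the surjection above is onto the stated set — this entry recovers, for every $j$, the multiset of sizes of subgons incident to $v_j$, and a Conway--Coxeter/Holm--J{\o}rgensen style ``ear-removal'' induction then rebuilds $T$ (a leaf of the dual tree of $T$ meets the boundary at a vertex on a single subgon, a configuration one can detect from the quiddity data and strip off before recursing on a smaller polygon); I would want to check whether this uniqueness is already recorded in the literature behind \Cref{thm:CCFrieze} and \Cref{thm:HJ}. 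Granting injectivity, the two class-counts coincide, and by the preceding paragraph both equal the constant term of $\widetilde{a_{\mathcal{M}}}(q)$.
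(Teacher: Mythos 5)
Your core argument is the same as the paper's: the published proof of this proposition is a one-liner citing \Cref{thm:CSPAmu}, \Cref{prop:CSPVersion2}, and \Cref{lem:ShiftAndRotate}, and your decomposition $\mathcal{A}_{\mathcal{M}} = \bigsqcup_\mu \mathcal{A}_\mu$ with additivity of cyclic sieving, the constant-coefficient orbit count, and the transfer of rotation to global row shift are precisely the details that proof suppresses.

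The one step that does not survive scrutiny is your surjectivity argument in the final paragraph. You claim that linear independence of $1,\lambda_{m_1+2},\ldots,\lambda_{m_s+2}$ over $\mathbb{Q}$ shows a frieze with entries in $\mathbb{Z}[\lambda_{m_1+2},\ldots,\lambda_{m_s+2}]$ can only arise from a dissection with subgon sizes in $\{m_i+2\}$. This fails because $\lambda_3=1$ lies in every such ring: if $1\notin\mathcal{M}$, a dissection mixing triangles with $(m_i+2)$-gons still yields a frieze with values in $\mathbb{Z}[\lambda_{m_1+2},\ldots,\lambda_{m_s+2}]$ (for instance, triangles together with quadrilaterals give entries in $\mathbb{Z}[\sqrt{2}]$), so the independence hypothesis itself can fail and the surjectivity claim with it. The workable reading --- and the one the paper implicitly adopts --- is that the proposition counts shift-classes of the friezes $F_T$ with $T\in\mathcal{A}_{\mathcal{M}}$, so surjectivity is automatic; what then remains is injectivity on rotation-orbits, i.e.\ that the quiddity row of $F_T$ recovers $T$ within $\mathcal{A}_{\mathcal{M}}$, a point the paper also leaves unaddressed (in the pure cases it is the content of \Cref{thm:CCFrieze} and \Cref{thm:HJ}, and your ear-removal sketch is the right mechanism in the mixed case once the size-multiset at each vertex is recoverable). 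So: same route as the paper, and your instinct that the dissection-to-frieze identification is the delicate point is sound, but the linear-independence argument must be repaired or the statement reinterpreted as above.
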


\begin{proof}
This result follows from \Cref{thm:CSPAmu}, \Cref{prop:CSPVersion2}, and \Cref{lem:ShiftAndRotate}. 
\end{proof}

Next, we use our cyclic sieving result on dissections of $P_n^\bullet$ to count infinite frieze patterns of type $\Lambda_{m+2}$ with a fixed minimal period of their rows. 
Neither \Cref{thm:CharacterizeInfiniteFriezePatterns} nor \Cref{thm:CharacterizeInfiniteFriezePatternsMAngulation} is phrased as a bijection. 
This is because, given a dissection $T$ of $P_n^\bullet$ or an annulus, we can construct a \emph{$k$-fold} dilation of $T$ for any $k \geq 1$ which would yield the same frieze pattern.  
The two dissections in \Cref{fig:SymmetricMangulation} are examples of this process.  
This dilation will not change the type of underlying surface, and a consequence of the constructive proofs in \cite{banaian2021periodic, baurInfiniteFriezes} is that there is a dissection associated to a minimal period of the quiddity row. 

Recall we have assumed that all infinite frieze patterns are periodic. A consequence of \Cref{lem:Kontinuant} is that the period of any row will divide the period of the quiddity row. Therefore, the period of the quiddity row is the \emph{minimal period} of all rows of an infinite frieze pattern.

We begin by considering the integral case,  for which we have a complete result.  Let $\widetilde{t_n}(q)$ denote the unique polynomial of degree $n-1$ which is equivalent to $t_n(q)$ modulo $q^n-1$. 

\begin{theorem}\label{thm:CountInfiniteIntegralFrieze}
The number of infinite, positive integral frieze patterns with principal growth coefficient 2 and minimal period $n$, counted up to global shift of the rows, is $[q^1] \widetilde{t_n}(q)$. 
Equivalently, the number of infinite, positive integral frieze patterns with rows with principal growth coefficient 2 and minimal period $n$ is $n[q^1] \widetilde{t_n}(q)$.
\end{theorem}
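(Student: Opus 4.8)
The plan is to reduce the entire statement to a count of free $G_n$-orbits on $\mathcal{T}_n^\bullet$ and then read off that count from the cyclic sieving phenomenon via \Cref{prop:CSPVersion2}.

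First I would set up the precise dictionary between the frieze patterns in question and triangulations of $P_n^\bullet$. By \Cref{thm:CharacterizeInfiniteFriezePatterns}, an infinite, positive integral frieze pattern has principal growth coefficient $2$ exactly when it is of the form $F_T$ for a triangulation $T$ of some once-punctured polygon. The only way two triangulations can yield the same frieze pattern is via dilation: if $T$ is a triangulation of $P_n^\bullet$ with $e$-fold rotational symmetry, then $T$ is the $e$-fold dilation of a triangulation of $P_{n/e}^\bullet$, and the two have the same frieze pattern, whose quiddity row---hence, by \Cref{lem:Kontinuant}, every row---has period dividing $n/e$; conversely a frieze pattern of minimal period $d$ comes from a unique triangulation of $P_d^\bullet$ having no nontrivial rotational symmetry. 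Thus restriction to triangulations of $P_n^\bullet$ whose rotational symmetry group is trivial gives a bijection onto the set of infinite, positive integral frieze patterns with principal growth coefficient $2$ and minimal period exactly $n$. I would record that a triangulation of $P_n^\bullet$ has $G_n$-stabilizer of order $e$ precisely when it has $e$-fold rotational symmetry, so the symmetry-free triangulations are exactly those lying in free $G_n$-orbits.

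Next, \Cref{lem:ShiftAndRotate} says that rotating a triangulation by $2\pi/n$ corresponds to shifting every row of its frieze pattern by one; on a frieze pattern of minimal period $n$ this shift has order $n$. Hence the above bijection is $G_n$-equivariant and descends to a bijection between equivalence classes of the relevant frieze patterns up to global row shift and free $G_n$-orbits on $\mathcal{T}_n^\bullet$, each orbit having size $n$. Finally I would apply \Cref{prop:CSPPuncturedPolygonTriangulationsOnly}, which asserts that $(\mathcal{T}_n^\bullet, G_n, t_n(q))$ exhibits the cyclic sieving phenomenon, together with \Cref{prop:CSPVersion2}: writing $t_n(q) \equiv \sum_{i=0}^{n-1} g_i q^i \pmod{q^n-1}$, so that $\widetilde{t_n}(q) = \sum_{i=0}^{n-1} g_i q^i$, the coefficient $g_i$ equals the number of $G_n$-orbits on $\mathcal{T}_n^\bullet$ whose stabilizer order divides $i$. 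Taking $i=1$ gives that $[q^1]\widetilde{t_n}(q) = g_1$ is the number of free orbits, which proves the first formula; multiplying by the common orbit size $n$ gives the second.

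The main obstacle is the first step: justifying that dilation is the \emph{only} source of non-injectivity in the geometric model of \Cref{thm:CharacterizeInfiniteFriezePatterns}, i.e. that a frieze pattern with growth coefficient $2$ and minimal period $n$ determines a \emph{unique} symmetry-free triangulation of $P_n^\bullet$. This has to be drawn from the constructive arguments in \cite{banaian2021periodic,baurInfiniteFriezes}; once it is established, everything else is bookkeeping with orbit--stabilizer and the reformulation of cyclic sieving in \Cref{prop:CSPVersion2}.
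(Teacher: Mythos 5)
Your proposal is correct and follows essentially the same route as the paper: identify frieze patterns of growth coefficient 2 and minimal period $n$ with rotation-symmetry-free triangulations of $P_n^\bullet$, then read off the number of free $G_n$-orbits as the coefficient of $q^1$ via \Cref{prop:CSPPuncturedPolygonTriangulationsOnly} and \Cref{prop:CSPVersion2}, multiplying by $n$ for the unshifted count. The uniqueness-up-to-dilation point you flag is handled in the paper exactly as you suggest, by appealing to the constructive proofs in \cite{banaian2021periodic,baurInfiniteFriezes} as discussed just before the theorem.
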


\begin{proof}
An infinite, positive integral frieze pattern with principal growth coefficient 2 with whose rows are $n$-periodic arises from a triangulation $T$ of $P^\bullet_n$ by \Cref{thm:CharacterizeInfiniteFriezePatterns}. If $n$ is the minimal period of the rows, then $T$ is not a $k$-fold dilation of a triangulation $T'$ of $P^\bullet_{n'}$ with $n' < n$.  This is equivalent to saying that $T$ does not have nontrivial rotational symmetry. Therefore, the stabilizer of $T$ in $G_n = \langle \sigma\rangle$, the cyclic group acting on triangulations of $P_n^\bullet$ by rotation, is the identity element. Combining \Cref{prop:CSPPuncturedPolygonTriangulationsOnly} and \Cref{prop:CSPVersion2} shows that the number of such orbits is the linear term of $\widetilde{t_n}(q)$. 

If we want to count all such frieze patterns instead of grouping them by shift equivalence classes, we can simply multiply this number by $n$.  The absence of symmetry in these dissections ensures this does not overcount.  


\end{proof}

The sequence $[q^1]\widetilde{t_n}(q)$ begins $1,1,3,8,27,245,800,\ldots$ and appears to coincide with \href{https://oeis.org/A022553}{OEIS A022553}. The sequence $n[q^1]\widetilde{t_n}(q)$ also seems to appear as \href{https://oeis.org/A045630}{OEIS A045630}.

\begin{quest}
Can infinite, positive integral frieze patterns with principal growth coefficient $s > 2$ for a fixed $s$ be counted in a similar way?
\end{quest}

When considering $(m+2)$-angulations for $m > 1$, due to the restriction in \Cref{prop:CSPPuncturedPolygon}, we cannot consider all possible numbers of spokes.  
In order to convert our result on $(m+2)$-angulations of $P_n^\bullet$ to a result about frieze patterns, we explain how the number of spokes of a dissection affects the associated frieze pattern. 
In the following, note that a \emph{fundamental domain} of an infinite frieze pattern $F_T$ with $n$-periodic rows is an infinite set of the form $\{F_T(i,j): k \leq i \leq k + n -1, j \geq i\}$ for any $k \in \mathbb{Z}$. 

\begin{proposition}
\label{prop:m-angulation with s spokes to frieze}
If $F_T$ is an infinite frieze pattern of type $\Lambda_{m+2}$ associated to a dissection $T \in T_{n,s}^{\bullet,(m)}$, then there are $\frac{n}{m}-s$ entries 1 in any fundamental domain of $F_T$.
\end{proposition}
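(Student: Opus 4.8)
The plan is to combine the combinatorial interpretation of frieze entries from \Cref{thm:CombinatorialInterpretation}(2) with a careful count of which entries in a fundamental domain can equal $1$. Recall that $F_T(i,j)$ is the weighted sum, under $wt_T$, of admissible matchings between $\overline{v_i}$ and $\overline{v_j}$ in the lift of $(P_n^\bullet,T)$ to the infinite strip, and that all weights are at least $1$. Since $T$ is an $(m+2)$-angulation, every subgon is an $(m+2)$-gon, so $wt_T$ of any admissible matching is $1$ (each $(m+2)$-gon may be used at most $m$ times, but whether $wt_T=1$ reduces to the same analysis as in the triangulation case, where all admissible matchings have weight $1$). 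Hence $F_T(i,j)=1$ if and only if there is exactly one admissible matching between $\overline{v_i}$ and $\overline{v_j}$. First I would pin down exactly when a unique admissible matching occurs: between two boundary vertices $\overline{v_i}$ and $\overline{v_j}$ with $j-i\geq 2$, the matching sequence $Q_{i+1},\ldots,Q_{j-1}$ is forced to be a chain of subgons, and uniqueness holds precisely when the sub-strip between $\overline{v_i}$ and $\overline{v_j}$ consists of a single ``fan'' of $(m+2)$-gons all sharing a common edge — equivalently, when the arc $(\overline{v_i},\overline{v_j})$ or an asymptotic arc cuts off a region that is itself a single $(m+2)$-gon, or a chain where each vertex lies on a unique subgon.

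The cleaner route, which I would actually pursue, is to exploit the relationship between entries equal to $1$ and the structure of the dissection directly, rather than through matchings. A finite $(m+2)$-angulation of $P_{N}$ has exactly $\frac{N-2}{m}$ subgons, and in the associated (finite) frieze pattern the number of $1$'s in a fundamental strip is controlled by the number of subgons: each subgon of size $p$ contributes $p-2$ to the count of $1$'s appearing ``just below the quiddity row'' at its vertices, by the explicit description of $F_T(i,i+3)$ and higher rows via continuants (\Cref{lem:Kontinuant}). For the punctured case, I would lift $(P_n^\bullet, T)$ to the infinite strip: a fundamental domain of $F_T$ corresponds to one period of the strip, which contains $n$ boundary vertices, $s$ copies of the (lifted) spokes — now asymptotic arcs — and $\ell = \frac{n}{m}$ subgons, of which $s$ are ``incident to infinity'' (these are the $(m+2)$-gons each bounded on one side by a spoke in $P_n^\bullet$) and $\ell - s$ are ordinary $(m+2)$-gons not touching the puncture. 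The key step is to show that each ordinary $(m+2)$-gon contributes exactly one entry equal to $1$ to the fundamental domain, while the $(m+2)$-gons incident to a spoke contribute none.

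I expect the main obstacle to be making precise the claim that ``each ordinary subgon contributes exactly one $1$ and each spoke-incident subgon contributes none,'' and verifying there is no over- or under-counting across the boundary of the fundamental domain. Concretely, for an ordinary $(m+2)$-gon $Q$ with vertices $\overline{v_{i_0}} < \overline{v_{i_1}} < \cdots < \overline{v_{i_m}} < \overline{v_{i_{m+1}}} = \overline{v_{i_0}}$ (in the strip, reading around $Q$), the entry $F_T(i_0, i_{m+1})$ counts matchings across the sub-strip behind $Q$ away from the puncture; I would argue by induction on the number of subgons (peeling off an ``ear'' $(m+2)$-gon adjacent to the boundary, exactly as in the proof of \Cref{lem:CountSkippingOneVertex} and \Cref{thm:CountMAngulation}) that a fresh ordinary ear introduces exactly one new $1$ and a spoke-adjacent ear introduces none, since any arc incident to the puncture forces the corresponding matching to be non-unique (there are at least two subgons a vertex adjacent to the puncture can be matched to, coming from the two sides of the spoke in the strip). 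The base case is $s = \ell$, i.e., the all-spoke $(m+2)$-angulation, which has $0$ subgons not touching the puncture and, one checks directly from the quiddity row, $0$ entries equal to $1$ in a fundamental domain. Assembling these observations gives exactly $\ell - s = \frac{n}{m} - s$ entries equal to $1$, as claimed.
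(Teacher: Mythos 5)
Your ``cleaner'' route is, at bottom, the same count as the paper's: the paper identifies the entries $1$ with the lifts of the non-spoke arcs of $T$, and in your strip picture each ordinary subgon lies directly above exactly one lifted non-spoke arc (and conversely), so your ``$\ell-s$ ordinary subgons'' and the paper's ``$\frac{n}{m}-s$ non-spoke arcs'' are two names for the same bijection. However, there are two genuine gaps. First, the claim in your opening paragraph that $wt_T$ of every admissible matching is $1$ for an $(m+2)$-angulation is false for $m\geq 2$: the quiddity entries $F_T(i-1,i+1)$ are positive multiples of $\lambda_{m+2}\notin\mathbb{Z}$, so already the length-one matchings have weight $\lambda_{m+2}\neq 1$. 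Hence ``entry equals $1$ if and only if the admissible matching is unique'' is not available, and this error propagates into your second route, where both key claims (an ordinary subgon produces an entry $1$; a spoke-adjacent subgon produces none) are justified by matching-uniqueness. What is actually needed, and what the paper invokes by extending \cite[Corollary 2]{BCI} to dissections as in \cite{banaian2021periodic}, is the finer statement that for a lifted non-spoke arc $(\overline{v_i},\overline{v_j})\in T$ the unique admissible matching has weight exactly $1$, while $F_T(i,j)>1$ whenever $j\geq i+2$ and $(\overline{v_i},\overline{v_j})\notin T$; spokes lift to asymptotic arcs with a single endpoint and so contribute no entry at all. With that characterization the count is immediate ($\frac{n}{m}$ arcs in total, $s$ of them spokes), and no induction is needed.

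Second, the ear-peeling induction is a sketch of precisely the statement to be proved. To run it you would need (i) the restriction property of $F_T$ across the ear's arc, so that the entries not involving the ear's interior vertices are exactly those of $F_{T'}$; (ii) a verification that among the new entries --- all $F_T(i,j)$ with $i$ or $j$ indexing an interior vertex of the ear --- exactly one equals $1$, which again requires the arc characterization above (note that the new $1$ is the entry at the ear's arc, which for $m=1$ coincides with the quiddity entry at the ear's tip, whereas for $m\geq 2$ the quiddity entries at the ear's interior vertices equal $\lambda_{m+2}\neq 1$ and the new $1$ sits lower in the array); and (iii) a genuine base case: ``one checks directly from the quiddity row'' does not suffice for the all-spoke dissection, since you must exclude entries $1$ from all infinitely many deeper rows of the fundamental domain, not just the quiddity row. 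None of these steps is hopeless, but each is essentially the content of the proposition, so as written the proposal assumes what it needs to prove; the paper's direct argument via the matching interpretation and the strip lift avoids the induction entirely.
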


\begin{proof}
In \cite[Corollary 2]{BCI}, Broline, Crowe, and Isaacs show that, given a polygon $P_n$ with triangulation $T$, if $(v_i,v_j) \in T$, then there is only one admissible matching between $v_i$ and $v_j$. The same argument holds in the more general case of a polygon with dissection. 

Recall that when working with a frieze pattern from a dissection $T$ of $P_n^\bullet$, we first construct a lift of $T$ to the infinite strip. However, every arc in the infinite strip can be regarded as a diagonal in a finite polygon inside the strip, allowing us to use the same machinery as in the finite case. 
Since spokes lift to one-sided infinite arcs with only one endpoint, entries of 1 in an infinite frieze pattern from a dissection of $P_n^\bullet$ will correspond to lifts of non-spokes. Each non-spoke from $T$ will correspond to one entry in a fundamental domain of $F_T$. An $(m+2)$-angulation of $P_n^\bullet$ will use $\frac{n}{m}$ arcs, so there will be $\frac{n}{m} - s$  non-spokes. By our previous discussion, we can conclude there are $\frac{n}{m} - s$  entries 1 in a fundamental domain of $F_T$.
\end{proof}

Let $\widetilde{t}_{n,s}^{(m)}(q)$ denote the unique polynomial of degree $n-1$ which is equivalent to $t_{n,s}^{(m)}(q)$ modulo $q^n-1$. 

\begin{proposition}\label{prop:CountInfiniteTypeLambdapFrieze}
Let $n,m,$ and $s$ be a triple satisfying the conditions from \Cref{prop:CSPPuncturedPolygonm-angulation}. The number of infinite frieze patterns of type $\Lambda_{m+2}$ with growth coefficient 2 having minimal period $n$ and $\frac{n}{m} - s$ entries 1 in any fundamental domain, counted up to global shift of the rows, is $[q^1] \widetilde{t}_{n,s}^{(m)}(q)$. Equivalently, the total number of infinite frieze patterns of type $\Lambda_{m+2}$ with growth coefficient 2 having minimal period $n$ and $\frac{n}{m} - s$ entries 1 in any fundamental domain is $n [q^1] \widetilde{t}_{n,s}^{(m)}(q)$.
\end{proposition}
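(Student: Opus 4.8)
The plan is to mirror the proof of \Cref{thm:CountInfiniteIntegralFrieze}, now using \Cref{prop:CSPPuncturedPolygonm-angulation} in place of \Cref{prop:CSPPuncturedPolygonTriangulationsOnly} and \Cref{prop:m-angulation with s spokes to frieze} to keep track of the number of entries equal to $1$. First I would record the dictionary between the two sides. By \Cref{thm:CharacterizeInfiniteFriezePatternsMAngulation}, every infinite frieze pattern $F$ of type $\Lambda_{m+2}$ with principal growth coefficient $2$ whose rows have minimal period $n$ arises as $F = F_T$ for some $(m+2)$-angulation $T$ of $P_n^\bullet$; the constructive arguments of \cite{banaian2021periodic} attach to $F$ a dissection of the punctured polygon whose boundary size is exactly the minimal period, so this $T$ is not a $k$-fold dilation of a dissection of a smaller punctured polygon and is unique up to the $G_n$-action. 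As in the proof of \Cref{thm:CountInfiniteIntegralFrieze}, not being a dilation is equivalent to $T$ having trivial stabilizer in $G_n$ (quotienting a nontrivial rotational symmetry by a $2\pi/k$ sector, which still contains the puncture, exhibits $T$ as a $k$-fold dilation). By \Cref{prop:m-angulation with s spokes to frieze}, among all such $T$ the ones with exactly $\frac{n}{m}-s$ entries $1$ in a fundamental domain of $F_T$ are precisely the elements of $\mathcal{T}_{n,s}^{\bullet,(m)}$, since $\frac{n}{m}$ depends only on $n$ and $m$ and $s \mapsto \frac{n}{m}-s$ is injective.

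Next I would pass to an orbit count. By \Cref{lem:ShiftAndRotate}, rotating a dissection by $2\pi/n$ shifts all rows of the associated frieze pattern, so $F_T$ and $F_{T'}$ agree up to global row shift exactly when $T$ and $T'$ lie in the same $G_n$-orbit; moreover the $n$ row-shifts of a frieze pattern of minimal period $n$ are pairwise distinct, so a free orbit of size $n$ in $\mathcal{T}_{n,s}^{\bullet,(m)}$ yields $n$ distinct frieze patterns and a single class up to row shift. Hence the number of frieze patterns of the stated kind, counted up to global row shift, is the number of free $G_n$-orbits on $\mathcal{T}_{n,s}^{\bullet,(m)}$, and the number counted without that identification is $n$ times this. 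Since $n,m,s$ satisfy the hypotheses of \Cref{prop:CSPPuncturedPolygonm-angulation}, the triple $(\mathcal{T}_{n,s}^{\bullet,(m)},G_n,t_{n,s}^{(m)}(q))$ exhibits the cyclic sieving phenomenon, so \Cref{prop:CSPVersion2} applies: writing $t_{n,s}^{(m)}(q)\equiv\sum_{i=0}^{n-1}g_iq^i \pmod{q^n-1}$, the coefficient $g_1 = [q^1]\widetilde{t}_{n,s}^{(m)}(q)$ equals the number of $G_n$-orbits whose stabilizer order divides $1$, i.e. the free orbits. This gives both assertions.

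The bookkeeping around orbit sizes is routine; the step I expect to demand the most care is the dictionary step — verifying that ``minimal period $n$'' on the frieze side matches ``trivial stabilizer / not a dilation'' on the dissection side, and that $T\mapsto F_T$ is injective modulo rotation on this class of reduced $(m+2)$-angulations. This is exactly the subtlety flagged before the statement (the correspondence in \Cref{thm:CharacterizeInfiniteFriezePatternsMAngulation} fails to be bijective precisely because of dilations), so one must invoke the minimality part of the constructions in \cite{banaian2021periodic} carefully; once that is in hand, everything else follows formally from \Cref{prop:CSPPuncturedPolygonm-angulation}, \Cref{prop:CSPVersion2}, \Cref{prop:m-angulation with s spokes to frieze}, and \Cref{lem:ShiftAndRotate}.
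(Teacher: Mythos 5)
Your proposal is correct and follows essentially the same route as the paper, which proves this proposition simply by declaring it analogous to \Cref{thm:CountInfiniteIntegralFrieze}: you replace \Cref{prop:CSPPuncturedPolygonTriangulationsOnly} with \Cref{prop:CSPPuncturedPolygonm-angulation}, use \Cref{thm:CharacterizeInfiniteFriezePatternsMAngulation} and \Cref{prop:m-angulation with s spokes to frieze} to translate the frieze-side conditions (minimal period, number of $1$'s) into ``reduced $(m+2)$-angulation with $s$ spokes,'' and then read off free orbits via \Cref{prop:CSPVersion2} and \Cref{lem:ShiftAndRotate}. Your explicit attention to the dilation/minimal-period dictionary is exactly the point the paper handles (briefly) in the proof of \Cref{thm:CountInfiniteIntegralFrieze}, so no further changes are needed.
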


The proof of \Cref{prop:CountInfiniteTypeLambdapFrieze} is analogous to that of \Cref{thm:CountInfiniteIntegralFrieze}.

\begin{example}
 As previously noted, the triple $n = 12, m = 3, s = 3$ satisfies the conditions in \Cref{prop:CSPPuncturedPolygonm-angulation}. Therefore, the number of infinite frieze patterns of type $\Lambda_5$ with growth coefficient 2, minimal period 12, and one entry 1 in any fundamental domain, counted up to global shift, is 
 $
 [q^1] t_{12,3}^{(4)} 
 = [q^1] \big(3
 \mathop{\Bigl[ \begin{smallmatrix}
 \scriptstyle 12 \\
 \scriptstyle 11
 \end{smallmatrix} \Bigr]}_q
 \big)= 3
 $. 
 Equivalently, the total number of such frieze patterns (no longer counting up to shift) is 36. 
 We draw 5-angulation representatives of the three symmetry classes in \Cref{fig:ExampleCountingInfiniteFriezesLambda5}.

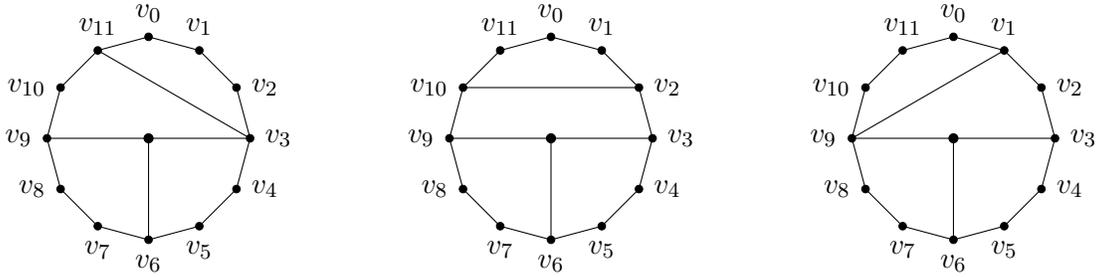
\begin{figure}
\centering
\begin{tabular}{ccc}
\begin{tikzpicture}[scale=0.5]
    \newdimen\R
    \R=2.7cm
    \draw (0:\R) \foreach \x in {30,60,...,360} {  -- (\x:\R) };   
    \foreach \x/\l/\p in
     {0/{\small $v_3$}/right,
      30/{\small $v_2$}/right,
      60/{\small $v_1$}/above,
      90/{\small $v_0$}/above,
      120/{\small $v_{11}$}/above,
      150/{\small $v_{10}$}/left,
      180/{\small $v_9$}/left,
     210/{\small $v_8$}/left,
      240/{\small $v_7$}/below,
      270/{\small $v_6$}/below,
      300/{\small $v_5$}/below,
      330/{\small $v_4$}/right}
    \node[inner sep=1pt,circle,draw,fill,label={\p:\l}] at (\x:\R) {};
    \draw[fill=black] (0,0) circle (.12cm);
    \draw(0:\R) -- (0,0);
    \draw(270:\R) -- (0,0);
    \draw(180:\R) -- (0,0);
    \draw(0:\R) -- (120:\R);
\end{tikzpicture}&
\begin{tikzpicture}[scale=0.5]
    \newdimen\R
    \R=2.7cm
    \draw (0:\R) \foreach \x in {30,60,...,360} {  -- (\x:\R) };   
    \foreach \x/\l/\p in
     {0/{\small $v_3$}/right,
      30/{\small $v_2$}/right,
      60/{\small $v_1$}/above,
      90/{\small $v_0$}/above,
      120/{\small $v_{11}$}/above,
      150/{\small $v_{10}$}/left,
      180/{\small $v_9$}/left,
     210/{\small $v_8$}/left,
      240/{\small $v_7$}/below,
      270/{\small $v_6$}/below,
      300/{\small $v_5$}/below,
      330/{\small $v_4$}/right}
    \node[inner sep=1pt,circle,draw,fill,label={\p:\l}] at (\x:\R) {};
    \draw[fill=black] (0,0) circle (.12cm);
    \draw(0:\R) -- (0,0);
    \draw(270:\R) -- (0,0);
    \draw(180:\R) -- (0,0);
    \draw(30:\R) -- (150:\R);
\end{tikzpicture}&
\begin{tikzpicture}[scale=0.5]
    \newdimen\R
    \R=2.7cm
    \draw (0:\R) \foreach \x in {30,60,...,360} {  -- (\x:\R) };   
    \foreach \x/\l/\p in
     {0/{\small $v_3$}/right,
      30/{\small $v_2$}/right,
      60/{\small $v_1$}/above,
      90/{\small $v_0$}/above,
      120/{\small $v_{11}$}/above,
      150/{\small $v_{10}$}/left,
      180/{\small $v_9$}/left,
     210/{\small $v_8$}/left,
      240/{\small $v_7$}/below,
      270/{\small $v_6$}/below,
      300/{\small $v_5$}/below,
      330/{\small $v_4$}/right}
    \node[inner sep=1pt,circle,draw,fill,label={\p:\l}] at (\x:\R) {};
    \draw[fill=black] (0,0) circle (.12cm);
    \draw(0:\R) -- (0,0);
    \draw(270:\R) -- (0,0);
    \draw(180:\R) -- (0,0);
    \draw(60:\R) -- (180:\R);
\end{tikzpicture}\\
\end{tabular}
\caption{Representatives for the three symmetry classes of infinite frieze patterns of type $\Lambda_5$ with one 1 in any fundamental domain and minimal period 12. }\label{fig:ExampleCountingInfiniteFriezesLambda5}
\end{figure}

\end{example}

\subsection{Frieze patterns from orbifolds}
\label{subsec: frieze patterns from orbifolds}

In this section, we discuss an interpretation of frieze patterns from triangulations with nontrivial symmetry. 
A triangulated polygon either has no rotational symmetry, 2-fold rotational symmetry, or 3-fold rotational symmetry. 
The corresponding frieze patterns correspond with three of the four types of friezes with glide symmetry; see \cite[Questions (3)-(5)]{conway1973triangulated,conway1973answers}. 

These three cases can also be explained by extending the notion of growth coefficients to finite frieze patterns. By insisting that a frieze pattern is tame (recall \Cref{def:BasicFriezeTerms}) and allowing negative entries, every finite frieze pattern can be uniquely extended to an infinite frieze pattern. In particular, in such an extension, the row beneath the bottom row of 0's would consist of $-1$'s.

The following was suggested in \cite[Proposition 5.9]{baur2019growth}. 
We fill in some details to prove this in general. We will heavily use item (1) from \Cref{thm:CombinatorialInterpretation}.

\begin{proposition}
\label{prop:SortFriezeByGrCo}
Let $F$ be a finite, positive integral frieze pattern of width $n$ and let $T$ be the corresponding triangulation of $P_n$; that is, $F = F_T$. 
\begin{enumerate}
    \item The principal growth coefficient of $F$ is $-2$ if and only if $T$ has no nontrivial rotational symmetry.
    \item The principal growth coefficient of $F$ is 0 if and only if $T$ has 2-fold rotational symmetry.
    \item The principal growth coefficient of $F$ is $1$ if and only if $T$ has 3-fold  rotational symmetry.
\end{enumerate}
\end{proposition}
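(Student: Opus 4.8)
The plan is to compute the principal growth coefficient $s_1$ of $F=F_T$ directly using the combinatorial interpretation in \Cref{thm:CombinatorialInterpretation}(1) together with the definition of growth coefficients applied to the infinite extension of $F$ described just above the statement. Since $F$ has width $n$, the quiddity row has minimal period $j$ dividing $n$, where $j = n/d$ and $d$ is the order of the rotational symmetry of $T$ (so $d\in\{1,2,3\}$ by the trichotomy for triangulated polygons). The growth coefficient is $s_1 = F(i,i+j+1) - F(i+1,i+j)$ for any $i$; by \Cref{thm:GrCo} (extended to the finite/negative-entry setting via the unique infinite extension) this is independent of $i$, so we may fix a convenient $i$.

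First I would set up the extension: a finite positive integral frieze pattern of width $n$ extends uniquely to a tame infinite frieze pattern whose row of $0$'s is followed by a row of $-1$'s, and whose rows are $n$-periodic; the entries $F(i,j)$ for $n < j-i$ are then determined and can be negative. The key identity to exploit is the glide symmetry: for a triangulation with $d$-fold symmetry, $F(i,j) = \pm F(i + n/d, j + n/d)$ with a sign governed by crossing the boundary rows, or more precisely $F(i,j)$ and the ``shifted'' entry are related through the $n$-periodicity of the extended pattern. Concretely, I would use that $F(i, i+n+1)$ relates to the $-1$ row and equals $-1$ when we land exactly on it, and then express $F(i,i+j+1)$ in terms of entries in the fundamental domain using \Cref{lem:Kontinuant} and the periodicity $F(i,j) = F(i, j-n)$ with an appropriate sign flip each time the second index decreases past a multiple related to $n$. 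This bookkeeping yields $s_1 = -2$ when $d=1$ (here $j = n$, and $F(i,i+n+1) = -1$, $F(i+1,i+n) = 1$, giving $-1-1 = -2$); $s_1 = 0$ when $d = 2$ (here $j = n/2$, and the two relevant entries are forced equal by the central symmetry, using \Cref{thm:CombinatorialInterpretation}(1) to match admissible matchings between the paired vertices); and $s_1 = 1$ when $d = 3$.

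The cleanest route for the $d=2$ and $d=3$ cases is to argue via matchings rather than continuants: for a triangulation with $d$-fold symmetry, the rotation $\rott^{n/d}$ sends admissible matchings between $v_i$ and $v_j$ bijectively to admissible matchings between $v_{i - n/d}$ and $v_{j - n/d}$ (since it is a symmetry of $T$), so $F_T(i,j) = F_T(i - n/d, j - n/d)$ whenever all four indices lie in a single ``window'' of width $n$. Applying this to $F(i, i+j+1)$ with $j = n/d$ rotates it down to an entry one full period away, which by $n$-periodicity of the extended frieze (with sign $(-1)^{d-1}$ or similar, tracked by how many times we pass the boundary) relates it to $F(i+1, i+j)$ plus a correction coming from the row of $-1$'s. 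Carefully: in the $d=2$ case the correction vanishes and in the $d=3$ case it contributes exactly $1$. I would also invoke $T_k(s_1) = s_k$ from \Cref{thm:GrCo} only as a consistency check, not as part of the argument.

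The main obstacle I anticipate is the sign and boundary bookkeeping in the extended infinite frieze pattern: one must be precise about how $F(i,j)$ for large $j-i$ (i.e. beyond width $n$) is computed, in particular that crossing from the $0$-row region to the $-1$-row region flips signs, and that the glide/rotation symmetry of $T$ translates into the correct relation among possibly-negative entries. Getting the exact constant ($-2$ vs $0$ vs $1$) right hinges entirely on this. A secondary subtlety is justifying that \Cref{thm:GrCo}, stated for genuine infinite frieze patterns, applies verbatim to the unique infinite extension of a finite one with negative entries allowed — this should follow since the extension is tame and periodic, but it is worth stating explicitly. Once the framework is fixed, each of the three cases reduces to evaluating two specific entries, which is routine via \Cref{lem:Kontinuant} or \Cref{thm:CombinatorialInterpretation}(1).
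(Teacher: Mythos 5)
Your case $d=1$ is exactly the paper's argument and is fine, and your reduction to the three ``if'' statements via the trichotomy of symmetry types is also how the paper proceeds. The gap is in the mechanism you propose for $d=2$ and $d=3$. First, a structural point: for $j=n/d$ with $d\in\{2,3\}$ the two entries $F(i,i+j+1)$ and $F(i+1,i+j)$ lie inside the finite positive frieze (rows $j+1\leq n$ and $j$), so no extension, no negative entries, and no sign bookkeeping enter at all; the ``correction coming from the row of $-1$'s'' you invoke does not exist in these cases. Second, and more seriously, rotation of $T$ alone cannot identify these two entries. The rotation $\rott^{n/d}$ does send matchings between $v_i$ and $v_j$ to matchings between $v_{i-n/d}$ and $v_{j-n/d}$, but that only re-proves the $n/d$-periodicity of each row; it never relates a matching over the $j$ interior vertices $v_{i+1},\ldots,v_{i+j}$ to a matching over the $j-2$ interior vertices $v_{i+2},\ldots,v_{i+j-1}$, which is what comparing $F(i,i+j+1)$ with $F(i+1,i+j)$ requires. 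The missing ingredient is the reflection statement \cite[Corollary 1]{BCI} (equivalently the glide symmetry $F(i,j)=F(j,i+n)$): the matchings from $v_0$ to $v_{n/2+1}$ are equinumerous with the matchings from $v_{n/2+1}$ back to $v_0$ around the other side, and only then does the $2$-fold rotation carry the latter onto the matchings from $v_1$ to $v_{n/2}$. This is exactly how the paper settles item (2); your phrase ``forced equal by the central symmetry'' skips the reflection step.

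For $d=3$ the gap is larger. Even after using glide symmetry and $\tfrac{n}{3}$-periodicity, one only gets $F(0,\tfrac{n}{3}+1)=F(\tfrac{n}{3}+1,n)=F(1,\tfrac{2n}{3})$, so the claim becomes $F(1,\tfrac{2n}{3})-F(1,\tfrac{n}{3})=1$, a difference of entries in different rows that is not a formal consequence of periodicity or glide symmetry. Your sentence ``in the $d=3$ case it contributes exactly $1$'' asserts the conclusion rather than proving it. The paper's proof does genuine work here: it locates the central triangle $(v_0,v_{n/3}),(v_{n/3},v_{2n/3}),(v_{2n/3},v_0)$, computes $F(1,\tfrac{n}{3})$ inside the sector polygon $Q$ as $1+\#\{(v_0,v_i)\in T:0<i<\tfrac{n}{3}\}$ via \cite[Corollary 1]{BCI}, uses \cite[Corollary 2]{BCI} to see that the unique matching along the diagonal $(v_0,v_{\tfrac{n}{3}})$ exhausts $Q$, so that $F(0,\tfrac{n}{3}+1)$ counts the subgons at $v_{n/3}$ outside $Q$, and then matches the two diagonal counts using the $3$-fold symmetry. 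Some argument of this kind (or an equivalent Ptolemy computation exploiting the central triangle) is needed; without it, items (2) and (3) are not established by your proposal.
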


\begin{proof}

Since every triangulation with rotational symmetry has 2-fold or 3-fold symmetry and every frieze pattern corresponds uniquely to a triangulation, it will suffice to show the three ``if'' statements hold. 

\textit{Item (1)} Saying $T$ has no nontrivial rotational symmetry is equivalent to saying that the quiddity row of $F_T$ has minimal period $n$.
In the extension of $F_T$ to an infinite frieze pattern, $F(i,i+n+1) = -1$ for all $i$, and we see the growth coefficient is $F(i,i+n+1)-F(i,i+n-1)= -1 - 1 = -2$.

\textit{Item (2)} Saying $T$ has 2-fold rotational symmetry is equivalent to saying that the quiddity row of $F_T$ has minimal period $\frac{n}{2}$. 
By \Cref{thm:CombinatorialInterpretation}, $F(0,\frac{n}{2}+1)$ is equal to the number of admissible matchings between $v_0$ and $v_{\frac{n}{2}+1}$ and $F(1,\frac{n}{2})$ is equal to the number of admissible matchings between $v_1$ and $v_{\frac{n}{2}}$. 
By \cite[Corollary 1]{BCI}, the number of admissible matchings between $v_0$ and $v_{\frac{n}{2}+1}$ (i.e choosing subgons at $v_1,v_2,\ldots,v_{\frac{n}{2}}$) is equivalent to the number of admissible matchings between $v_{\frac{n}{2}+1}$ and $v_0$ (i.e choosing subgons at $v_{\frac{n}{2}+2},v_{\frac{n}{2}+3},\ldots,v_{n-1}$). 
The latter is equal to the number of  admissible matchings between $v_1$ and $v_{\frac{n}{2}}$ because $T$ has 2-fold symmetry. Therefore, $F(0,\frac{n}{2}+1) - F(1,\frac{n}{2}) = 0$, and by \Cref{thm:GrCo}, 0 is the principal growth coefficient.

\textit{Item (3)} If $T$ has 3-fold rotational symmetry, then the quiddity row of $F_T$ has minimal period $\frac{n}{3}$. 
Such a triangulation will have a central triangle, so without loss of generality, suppose this triangle consists of diagonals $(v_0,v_{\frac{n}{3}}), (v_{\frac{n}{3}},v_{\frac{2n}{3}}),$ and $(v_{\frac{2n}{3}},v_0)$. 
We again compare admissible matchings between $v_0$ and $v_{\frac{n}{3}+1}$ with admissible matchings between $v_1$ and $v_{\frac{n}{3}}$. 
We can regard the latter pair of vertices inside the smaller polygon formed by $v_0,v_1,\ldots,v_{\frac{n}{3}}$, with the diagonal $(v_0,v_{\frac{n}{3}})$ a boundary edge; call this polygon $Q$. 
Applying \cite[Corollary 1]{BCI} to the computation of $F(1,\frac{n}{3})$ inside $Q$, we have that the number of admissible matchings between $v_1$ and $v_{\frac{n}{3}}$ can be computed by counting the number of subgons in $Q$ incident to $v_0$. That is, we have shown $F(1,\frac{n}{3})= 1 + \# \{(v_0,v_i) \in T: 0 < i < \frac{n}{3}\}$. 

Now, we consider admissible matchings between $v_0$ and $v_{\frac{n}{3}+1}$. 
By \cite[Corollary 2]{BCI}, there is a unique admissible matching between $v_0$ and $v_{\frac{n}{3}}$, and this uses every subgon in $Q$. 
Therefore, an admissible matching between $v_0$ and $v_{\frac{n}{3}+1}$ is the result of appending a subgon incident to $v_{\frac{n}{3}}$ which is not in $Q$ to this sequence. 
The number of such subgons is $1 + \# \{(v_\frac{n}{3},v_i) \in T: \frac{n}{3} < i < n\} = 2 + \#\{(v_\frac{n}{3},v_i) \in T: \frac{n}{3} < i < \frac{2n}{3}\}$, where we use our assumption about the position of the central triangle. 
Since $T$ has 3-fold symmetry, we know $\#\{(v_\frac{n}{3},v_i) \in T: \frac{n}{3} < i < \frac{2n}{3}\} = \#\{(v_0,v_i) \in T: 0 < i < \frac{n}{3}\}$, so $F(0,\frac{n}{3}+1) - F(1,\frac{n}{3}) = 1$.
\end{proof}

\begin{remark}
\Cref{prop:SortFriezeByGrCo} could easily be extended by associating to a dissection $T$ the largest $\ell$ such that $T$ has $\ell$-fold symmetry. 
For $\ell > 3$, we can use the same strategy as in the proof of Item (3), utilizing the combinatorial interpretation of entries of $F_T$ from \cite{banaian2021periodic}. 
We chose to focus on the case of integral frieze patterns and triangulations since the motivation of this section comes from the theory of generalized cluster algebras, whose generators will always specialize to integers by the Laurent Phenomenon \cite{chekhov2014teichmuller}.
\end{remark}

\Cref{prop:SortFriezeByGrCo} partitions finite, positive integral frieze patterns into three sets. 
Our goal in the remainder of this section is to discuss an interpretation of frieze patterns described in items (2) and (3). 
This will require a new perspective on frieze patterns. 

In \cite[Theorem 1.5]{holm2020p}, Holm and J{\o}rgensen show that the values $F_T$ in a frieze pattern associated to a dissection $T$ of a polygon satisfy Ptolemy relations.  
That is, for all $i < j < k < \ell$, we have 
\[
F_T(i,k) F_T(j,\ell) = F_T(i,j) F_T(k,\ell) + F_T(i,\ell) F_T(j,k).
\]

Given $T$ a dissection of $P_n$, each frieze pattern $F_T$ encodes a ring homomorphism from a \emph{type $A_{n-3}$ cluster algebra} to $\mathbb{Z}[\lambda_{p_1},\ldots,\lambda_{p_s}]$ where $p_1,\ldots,p_s$ are the sizes of the subgons cut out by $T$. 
This follows from the geometric realization of these algebras \cite[Section 12.2]{CAII}.

Studying homomorphisms from more general cluster algebras to integral domains has been a topic of interest lately; see for example \cite{pants,felikson2024friezes,FontainePlamondon,germain2023friezes,GunawanSchiffler}. 
These homomorphisms are called \emph{friezes}. 
Often, the cluster algebras studied through the lens of friezes have a geometric realization, allowing a pictorial description of the homomorphism.  
In particular, friezes on $P_n$ with positive integral values are in bijection with frieze patterns of width $n$ with positive integral values. 

We will introduce a new family of friezes here which will correspond to the frieze patterns in \Cref{prop:SortFriezeByGrCo} items (2) and (3). 
These  are related to skew-symmetrizable cluster algebras and generalized cluster algebras in the sense of Chekhov-Shaprio \cite{chekhov2014teichmuller} respectively. We will define these purely geometrically and avoid relying on the language of cluster algebras.

An orbifold is a generalization of a manifold with singular points called orbifold points. 
Let $P_n^\star$ denote a polygon $P_n$ with one orbifold point $\star$ which has an order $p \geq 2$. 
Arcs on $P_n^\star$ are equivalence classes of sets of arcs on $P_{pn}$ which are fixed under $2\pi/p$ rotation. 
Triangulations on $P_n^\star$ come from maximal sets of arcs on $P_{pn}$ which are invariant under rotation.

The family of arcs generated by all rotations of $(i,i+n)$, $0 \leq i < n$, project to an arc $\gamma$ on $P_n^\star$ with $\gamma(0) = \gamma(1) = v_i$ such that $\gamma$ cuts out a monogon containing $\star$. We refer to these as \emph{pending arcs}. 
Note that some other sources will instead draw pending arcs has having an endpoint at $v_i$ and an endpoint at $\star$; we forbid such curves and insist our arcs only have endpoints in marked points, i.e. vertices.  
We use the same conventions as in the description of arcs on $P_n^\bullet$. 
Here, the notation $(v_i,v_i)$ will mean the pending arc based at $v_i$; non-pending arcs will be called \emph{standard arcs} for emphasis.

Consider a function $f$ on the set of arcs of $P_n^\star$. 
Given two arcs $\tau, \tau'$ which intersect, we say that the product $f(\tau)f(\tau')$ \emph{respects skein relations} if $f(\tau)f(\tau') = f(\Gamma^+) + f(\Gamma^-)$ where $\Gamma^+$ and $\Gamma^-$ are sets of arcs resulting from taking the chosen intersection point $\mathsf{X}$ of $\tau$ and $\tau'$ and replacing it with $\asymp$ and \rotatebox[origin=c]{90}{$\asymp$} respectively. 
When performing this smoothing on an orbifold it is possible to produce a curve with a self-intersection. 
In this case, one proceeds by smoothing the self-crossing in the same way. Smoothing a self-crossing will produce a closed curve $\xi$ which is either contractible closed curve or which encloses $\star$. 
In the former case, we set $f(\xi) = -2$ and in the latter we set $f(\xi) = \lambda_p$ where $p$ is the order of $\star$. 
In particular, in the case of $P_n^\star$, performing this process for two distinct pending arcs, which will have two points of intersection, yields 
\begin{equation}
  f(v_i,v_i) f(v_j,v_j) = f(v_i,v_j)^2 + \lambda_p f(v_i,v_j)f(v_j,v_i) + f(v_j,v_i)^2 . 
\label{eq:GenMutat}
\end{equation}

\begin{definition}
Given an integral domain $R$, a \emph{frieze} on a surface or orbifold $(S,M)$  is a function $f: \{\text{arcs on }(S,M)\} \to R$ such that 
\begin{itemize}
    \item $f(\gamma) = 1$ if $\gamma$ is isotopic to a boundary segment and
    \item $f$ respects skein relations.
\end{itemize}
If all values of $f$ lie in $\mathbb{Z}_{>0}$, we say $f$ is a \emph{positive integral frieze}.
\end{definition}

An important question in the study of friezes is whether every frieze over a fixed surface or orbifold is \emph{unitary}. 
A frieze on $(S,M)$ is unitary if there exists a triangulation $T$ such that $f(\tau) = 1$ for all $\tau \in T$. Since this uniquely determines the frieze, we denote this $f_T$. Fontaine and Plamondon show that friezes on $P_n^\bullet$ are not always unitary \cite{FontainePlamondon}. 
Felikson and Tumarkin recently showed that every frieze on an unpunctured surface is unitary and classified friezes for general surfaces \cite{felikson2024friezes}. 

If $\star$ is order 2, then standard arcs on $P_n^\star$ correspond to centrally symmetric pairs of arcs on $P_{2n}$ while pending arcs correspond to diameters $(v_i,v_{n+i})$. Centrally symmetric triangulations of $P_{2n}$ provide a geometric realization for cluster algebras of type $C_{n-1}$. Using the technique of folding Dynkin diagrams, Fontaine and Plamondon showed the following. 

\begin{proposition}[\cite{FontainePlamondon}]\label{prop:OrbifoldFriezesUnitaryOrder2}
Given a positive integral orbifold frieze $f$ on $P_n^\star$ where $\star$ is an orbifold point of order 2, there exists a unique triangulation $T$ of $P_n^\star$ such that for all $\tau \in T$, $f(\tau) = 1$. 
\end{proposition}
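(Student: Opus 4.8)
The plan is to reduce the statement to the classical Conway--Coxeter correspondence (\Cref{thm:CCFrieze}) applied to the branched double cover $P_{2n}\to P_n^\star$. Throughout, let $\rho$ denote the central symmetry of $P_{2n}$ (rotation by $\pi$), so that $P_n^\star = P_{2n}/\langle\rho\rangle$. First I would make the folding dictionary recalled before the statement completely explicit at the level of a single frieze: given a positive integral frieze $f$ on $P_n^\star$, define a function $\widehat f$ on the arcs of $P_{2n}$ by setting $\widehat f\equiv 1$ on boundary segments, $\widehat f(v_i,v_{i+n}) = f(v_i,v_i)$ on the diameters (the lifts of pending arcs), and $\widehat f(\delta) = f(\alpha)$ for a non-diameter diagonal $\delta$, where $\alpha$ is the standard arc of $P_n^\star$ whose lift is the centrally symmetric pair $\{\delta,\rho\delta\}$. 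Since every non-diameter diagonal of $P_{2n}$ lies in exactly one such pair, $\widehat f$ is well defined, takes values in $\mathbb{Z}_{>0}$, and is $\rho$-invariant by construction.

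The crux is to show that $\widehat f$ is a frieze on the unpunctured polygon $P_{2n}$, i.e.\ that it satisfies the Ptolemy relations of the disc. This is precisely the content of the folding of the type $A_{2n-3}$ cluster structure (triangulations of $P_{2n}$) onto the type $C_{n-1}$ cluster structure (centrally symmetric triangulations, equivalently triangulations of $P_n^\star$): by \cite{FontainePlamondon}, a frieze of type $C_{n-1}$ --- equivalently, via the geometric model recalled above, a frieze on $P_n^\star$ --- lifts to a $\rho$-invariant frieze of type $A_{2n-3}$. The one compatibility worth recording by hand is that, for two diameters $(v_i,v_{i+n})$ and $(v_j,v_{j+n})$, which cross at the center of $P_{2n}$, the Ptolemy relation for $\widehat f$ reads $\widehat f(v_i,v_{i+n})\widehat f(v_j,v_{j+n}) = \widehat f(v_i,v_j)\widehat f(v_{i+n},v_{j+n}) + \widehat f(v_i,v_{j+n})\widehat f(v_{i+n},v_j)$; since $\{(v_i,v_j),(v_{i+n},v_{j+n})\}$ and $\{(v_i,v_{j+n}),(v_{i+n},v_j)\}$ are the lifts of the two standard arcs $(v_i,v_j)$ and $(v_j,v_i)$ of $P_n^\star$, this is exactly \eqref{eq:GenMutat} with $\lambda_2 = 2\cos(\pi/2) = 0$. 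More generally, the vanishing $\lambda_2 = 0$ is what collapses the a priori three-term skein relations on the orbifold --- any extra term being weighted by $\lambda_2$, coming from a loop enclosing $\star$ produced when a smoothing introduces a self-crossing --- down to the two-term Ptolemy relations upstairs. I expect the main obstacle to be making this lifting step self-contained, i.e.\ checking directly that $\widehat f$ obeys \emph{every} Ptolemy relation of $P_{2n}$; the delicate point is the bookkeeping of which orbifold arcs appear when a diameter crosses a standard arc, and for a full write-up I would instead cite the folding argument of \cite{FontainePlamondon}.

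Granting that $\widehat f$ is a positive integral frieze on $P_{2n}$, I would finish as follows. By \Cref{thm:CCFrieze} there is a unique triangulation $\widehat T$ of $P_{2n}$ with $\widehat f(\tau) = 1$ for every $\tau\in\widehat T$. Applying $\rho$ and using $\rho$-invariance of $\widehat f$, the triangulation $\rho(\widehat T)$ also satisfies $\widehat f\equiv 1$ on its arcs, so $\rho(\widehat T) = \widehat T$ by the uniqueness in \Cref{thm:CCFrieze}; that is, $\widehat T$ is centrally symmetric. A centrally symmetric triangulation of $P_{2n}$ descends to a triangulation $T$ of $P_n^\star$, and by construction $f(\tau) = \widehat f(\widetilde\tau) = 1$ for each $\tau\in T$ and each lift $\widetilde\tau$ of $\tau$, giving existence. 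For uniqueness, if $T_1$ and $T_2$ are triangulations of $P_n^\star$ on which $f$ is identically $1$, their lifts $\widehat{T_1},\widehat{T_2}$ are centrally symmetric triangulations of $P_{2n}$ on which $\widehat f$ is identically $1$; hence $\widehat{T_1} = \widehat{T_2}$ by \Cref{thm:CCFrieze}, and therefore $T_1 = T_2$. In short, the argument is a geometric rephrasing of the fact that every frieze of type $C_{n-1}$ is unitary, which is established in \cite{FontainePlamondon}.
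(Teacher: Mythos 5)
Your proposal is correct and follows essentially the same route as the paper: the paper's proof simply invokes Fontaine--Plamondon's folding result identifying friezes on $P_n^\star$ (order~2) with type $C_{n-1}$ friezes, which correspond bijectively to centrally symmetric triangulations of $P_{2n}$. You merely unpack that citation a bit further, using the folding dictionary to lift $f$ to a $\rho$-invariant frieze on $P_{2n}$ (still citing \cite{FontainePlamondon} for the key fact that the lift satisfies the Ptolemy relations) and then finishing with \Cref{thm:CCFrieze} plus the symmetry of the unique Conway--Coxeter triangulation, which is a sound and accurate elaboration of the same argument.
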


\begin{proof}
A frieze on $P_n^\star$ when $\star$ has order 2 is equivalent to a frieze of type $C_{n-1}$. Fontaine and Plamondon show that such friezes are in 1-1 correspondence with friezes on $P_{2n}$ associated to centrally symmetric triangulations \cite{FontainePlamondon}. Equivalently, each frieze on $P_n^\star$ is of the form $f_T$ for $T$ a triangulation of $P_n^\star$.
\end{proof}

The case of order 2 is special because $\lambda_2 = 0$, so one term in \Cref{eq:GenMutat} disappears. 
The natural next candidate to consider is order 3. Our proof in this case is inspired by the proof for friezes on $P_n$, which is given in \cite[Question 25]{conway1973triangulated,conway1973answers}.

\begin{proposition}
\label{prop:OrbifoldFriezesUnitaryOrder3}
Given a positive integral orbifold frieze $f$ on $P_n^\star$ where $\star$ is an orbifold point of order 3, there exists a unique triangulation $T$ of $P_n^\star$ such that for all $\tau \in T$, $f(\tau) = 1$. 
\end{proposition}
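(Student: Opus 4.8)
The plan is to mimic the classical Conway–Coxeter induction for friezes on polygons (as referenced for \cite[Question 25]{conway1973triangulated,conway1973answers}), adapted to the orbifold setting where $\star$ has order $3$ and $\lambda_3 = 1$. The key structural fact we will exploit is that a triangulation of $P_n^\star$ always contains exactly one pending arc, and cutting along that pending arc produces an ordinary polygon $P_{n+1}$ (the monogon around $\star$ gets ``opened up'' into a triangle with the pending arc doubled), on which the ordinary Conway–Coxeter theory applies. So the strategy is an induction on $n$: the base case $n=1$ (a once-orbifolded monogon, with only the pending arc and a boundary edge) is trivial, and for the inductive step we must locate an arc $\tau$ in our hypothetical unitary triangulation — i.e. an arc with $f(\tau)=1$ — whose removal reduces to a smaller orbifold or to an ordinary polygon.

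The first main step is to establish the ``there exists a $1$'' lemma: given any positive integral frieze $f$ on $P_n^\star$, there is a standard arc $\tau$ (necessarily of the form $(v_i,v_{i+2})$ in the sense that it cuts off a triangle using two boundary edges, i.e. an ``ear'') with $f(\tau)=1$, OR the frieze is already reduced to a small exceptional case. For this I would combine the skein relation \Cref{eq:GenMutat} with the boundary normalization: taking $\tau = (v_i,v_i)$ the pending arc and $\tau'$ a standard arc crossing it, the skein smoothing expresses $f(v_i,v_i)f(\tau')$ as a sum of products of frieze values on a smaller configuration, and positivity forces the quiddity-row entries to be bounded, exactly as in the polygon case where the quiddity row of a positive integral frieze must contain a $1$. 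Concretely, I would run the argument on the quiddity row: the sum of $(a_i - 2)$ around the polygon is a fixed negative-enough constant (this is where $\lambda_3=1$ matters, since the orbifold point contributes a pending-arc ``triangle'' whose count must be tracked), forcing some $a_i \le 1$, hence $=1$ by positivity, which by \Cref{thm:CombinatorialInterpretation}-style reasoning (or directly from the skein relation) means $(v_{i-1},v_{i+1})$ is a frieze arc with value $1$.

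The second main step is the reduction: once we have an ear $\tau=(v_{i-1},v_{i+1})$ with $f(\tau)=1$, removing $v_i$ and replacing the two boundary edges $(v_{i-1},v_i),(v_i,v_{i+1})$ by the single boundary edge $\tau$ yields a frieze $f'$ on $P_{n-1}^\star$; one checks $f'$ still satisfies the boundary condition and skein relations (the skein relations not involving $v_i$ are untouched, and those that did involve it are recovered via $f(\tau)=1$). By induction $f'=f'_{T'}$ for a unique triangulation $T'$ of $P_{n-1}^\star$, and then $T = T' \cup \{(v_{i-1},v_i),(v_i,v_{i+1})\}$ — more precisely, reinstating $v_i$ and the triangle it spans — is the desired triangulation with $f=f_T$. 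Uniqueness follows because the inductive step is reversible: any triangulation realizing $f$ must contain an ear at a vertex whose quiddity entry is $1$, and the choice of which ear to remove does not affect the final answer by the standard diamond/confluence argument.

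The hard part will be the first step in the orbifold setting: I need to be careful that the combinatorial interpretation of frieze entries via admissible matchings (\Cref{thm:CombinatorialInterpretation}) — which in the excerpt is stated for dissections of ordinary and once-punctured polygons, not orbifolds — either transfers to $P_n^\star$ with order-$3$ orbifold point, or else I must argue purely from the skein relation \Cref{eq:GenMutat} and positivity without it. Because $\lambda_3 = 1$, equation \Cref{eq:GenMutat} reads $f(v_i,v_i)f(v_j,v_j) = f(v_i,v_j)^2 + f(v_i,v_j)f(v_j,v_i) + f(v_j,v_i)^2$, and I expect the bulk of the technical work to be showing that this, together with the polygon-type skein relations for standard arcs, still forces a quiddity entry equal to $1$ — equivalently, that no positive integral orbifold frieze can have all quiddity entries $\ge 2$. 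I would handle this by passing to the triangulated $P_{3n}$ picture: a frieze on $P_n^\star$ lifts to a $3$-fold-symmetric positive integral frieze pattern on $P_{3n}$, which by \Cref{thm:CCFrieze} corresponds to a $3$-fold symmetric triangulation of $P_{3n}$, and such a triangulation manifestly has an ear (in fact three, one in each sector). Descending, this ear is precisely the needed arc with $f$-value $1$. This lifting argument is the cleanest route and sidesteps re-deriving a matchings interpretation on the orbifold; the remaining obstacle is just checking that the correspondence of \Cref{thm:CCFrieze} intertwines the skein/Ptolemy structure on $P_n^\star$ with the frieze-pattern structure on $P_{3n}$ restricted to symmetric objects, which should follow from \cite[Theorem 1.5]{holm2020p} (Ptolemy relations) applied equivariantly.
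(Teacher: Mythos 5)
Your overall skeleton (induct on $n$, locate an arc with frieze value $1$, cut it off and recurse) is the same as the paper's, but the crucial step --- showing that a positive integral frieze on $P_n^\star$ must take the value $1$ on some ``ear'' $(v_{i-1},v_{i+1})$ --- is not actually established by either of your two routes. Route (a), the quiddity-sum heuristic, does not get off the ground for an abstract frieze: the count of subgons at each vertex is only available once you already know $f$ comes from a triangulation, which is what you are trying to prove. Route (b), lifting $f$ to a $3$-fold symmetric positive integral frieze pattern on $P_{3n}$ and invoking \Cref{thm:CCFrieze}, assumes precisely the unproven content of the proposition. To build the lift you must verify that the pulled-back function on diagonals of $P_{3n}$ satisfies the diamond/Ptolemy rule of \Cref{def:FriezePattern}, and this does not follow from the orbifold skein relations: when a diagonal of $P_{3n}$ is long (near a diameter), its projection crosses the projection of the other diagonal of the diamond more than once on the orbifold, and the resolutions produce self-crossing curves whose further smoothing introduces the $\lambda_3$ term of \Cref{eq:GenMutat}; these relations among orbit-values simply do not reduce to the single Ptolemy relation you need on the cover. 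For order $2$ this equivalence is exactly the Fontaine--Plamondon folding theorem cited in \Cref{prop:OrbifoldFriezesUnitaryOrder2}; for order $3$ there is no Dynkin folding to cite (the relevant structure is a Chekhov--Shapiro generalized cluster algebra), so ``applying Holm--J{\o}rgensen equivariantly'' is not a routine check but the whole difficulty.

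The paper avoids the lift entirely and argues directly on the orbifold: assuming every ear value satisfies $f(v_{i-1},v_{i+1})\geq 2$, the standard skein relations $f(v_0,v_i)f(v_{i-1},v_{i+1}) = f(v_0,v_{i-1})+f(v_0,v_{i+1})$ force the strictly increasing chain $f(v_0,v_0) > f(v_0,v_{n-1}) > \cdots > f(v_0,v_2) > 1$ (and likewise from any base vertex); choosing $v_0$ so that $f(v_0,v_0)$ is minimal among pending-arc values and applying \Cref{eq:GenMutat} to the two pending arcs $(v_0,v_0)$, $(v_{n-1},v_{n-1})$ --- where $\lambda_3 = 1$ and $f(v_{n-1},v_0)=1$ since that arc is a boundary edge --- yields $f(v_0,v_{n-1})^2 + f(v_0,v_{n-1}) + 1 \geq f(v_0,v_0)^2$, contradicting $f(v_0,v_0) > f(v_0,v_{n-1})$ with $f(v_0,v_0)\geq 3$. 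If you want to salvage your write-up, you should replace the lifting step with an argument of this kind (or genuinely prove the orbifold-to-cover correspondence, which is substantially more work than a check); the cut-and-recurse step and the base cases $P_1^\star$, $P_2^\star$ in your sketch are fine and agree with the paper.
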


\begin{proof} 
We will induct on $n$. 
The case of $P_1^\star$ is trivial because there are no non-boundary arcs on $P_1^\star$. We also directly check $P_2^\star$. 
Here, there are two non-boundary arcs, $(v_0,v_0)$ and $(v_1,v_1)$ whose values under any frieze $f$ satisfy $f(v_0,v_0) f(v_1,v_1) = f(v_0,v_1)^2 + f(v_0,v_1)f(v_1,v_0) + f(v_1,v_0)^2 = 3$. 
We see that we must have either $f(v_0,v_0) = 1$ and $f(v_1,v_1) = 3$ or the same with the roles of $v_0$ and $v_1$ swapped. 
In each case, the single arc with value 1 under the frieze forms a triangulation of $P_2^\star$. 

Now, we assume we have shown our claim for $P_{n-1}^\star$ and we consider a frieze on $P^\star_n$. 
Assume for sake of contradiction that for all $0 \leq i \leq n-1$, $f(v_{i-1},v_{i+1}) > 1$. For all $2 \leq i \leq n-1$, we have $f(v_0,v_i)f(v_{i-1},v_{i+1}) = f(v_0,v_{i-1}) + f(v_0,v_{i+1})$. 
In other words, 
\[
f(v_0,v_{i+1}) = f(v_0,v_i) f(v_{i-1},v_{i+1}) - f(v_0,v_{i-1}) \geq 2 f(v_0,v_i) - f(v_0,v_{i-1}).
\]

Similarly, the skein relation $f(v_0,v_{n-1}) f(v_{n-2},v_0) = f(v_0,v_0) + f(v_0,v_{n-2})$ yields 
\[
f(v_0,v_0) = f(v_0,v_{n-1}) f(v_{n-2},v_0) - f(v_0,v_{n-2}) \geq 2 f(v_0,v_{n-1}) - f(v_0,v_{n-2}).
\]
Therefore, we have a chain of inequalities, 
\[
f(v_0,v_0) - f(v_0,v_{n-1}) \geq f(v_0,v_{n-1}) - f(v_0,v_{n-2}) \geq \cdots \geq f(v_0,v_{2}) - f(v_0,v_1) \geq 2-1 > 0
\]
implying 
\[
f(v_0,v_0) > f(v_0,v_{n-1}) > \cdots > f(v_0,v_2) > 1.
\]

We can replace $v_0$ with any other vertex and get a similar chain of inequalities.
Now, suppose we have indexed the vertices so that $f(v_0,v_0) = \min\{f(v_j,v_j) : 0 \leq j \leq n-1\}$. Apply the skein relation from \Cref{eq:GenMutat} to the intersections between $(v_0,v_0)$ and $(v_{n-1},v_{n-1})$, 
\begin{align*}
    f(v_0,v_0)f(v_{n-1},v_{n-1}) 
    & = f(v_0,v_{n-1})^2 + f(v_0,v_{n-1})f(v_{n-1},v_0) + f(v_{n-1},v_0)^2 \\
    & = f(v_0,v_{n-1})^2 + f(v_0,v_{n-1}) + 1 
\end{align*}
By our designation of $v_0$, we have 
\begin{equation}
    f(v_0,v_{n-1})^2 + f(v_0,v_{n-1}) + 1 \geq  f(v_0,v_0)^2 .
\label{eq:EndOfUnitaryProof}
\end{equation}
Recall we have shown $f(v_0,v_0) > f(v_0,v_{n-1})$. 
Since $n \geq 3$, we have that $f(v_0,v_0) \geq 3$ and $f(v_0,v_{n-1}) \geq 2$, and we can conclude that \Cref{eq:EndOfUnitaryProof} is impossible. 

From this contradiction, we see there must exist a value $0 \leq i \leq n-1$ such that $f(v_{i-1},v_{i+1})$ is 1. 
Restricting to values $f(v_j,v_k)$ for $0 \leq j,k \leq n-1, j\neq i \neq k$ gives a frieze on $P_n^\star$ where the edge $(v_{i-1},v_{i+1})$ behaves as a boundary arc. 
By the inductive hypothesis, there exists a triangulation $T'$ of this copy of $P_{n-1}^\star$ such that for all $\tau \in T'$, $f(\tau) = 1$. 
Therefore, $T:= T' \cup \{(v_{i-1},v_{i+1})\}$ is also such a triangulation $f$, showing $f$ is unitary. 
\end{proof}

\begin{example}

Let $T$ be the triangulation of $P_3^\star$ given below.

\begin{center}
\begin{tikzpicture}[scale=1.5]
\draw(0,0) -- (2,0) -- (1,1.73) -- (0,0);
\node[] at (1,0.5){$\times$};
\draw(0,0) to [out = 45, in = 180] (1,1);
\draw(1,1) to [out = 0, in = 135]  (2,0);
\draw(0,0) to [out = 35, in = 180] (1,0.8);
\draw(1,0.8) to [out = 0, looseness = 1.2,in = 0] (1,0.3);
\draw(1,0.3) to [out = 180, in = 30] (0,0);
\node[circle, fill = black, scale = 0.3] at (0,0)    {};
\node[circle, fill = black, scale = 0.3] at (2,0)    {};
\node[circle, fill = black, scale = 0.3] at (1,1.73) {};
\node[left]  at (0,0)    {$v_0$};
\node[above] at (1,1.73) {$v_1$};
\node[right] at (2,0)    {$v_2$};
\end{tikzpicture}
\end{center}

We display the entries of the frieze associated to this triangulation in the table below for $\star$ both order 2 and order 3, where the entry in row $v_i$ and column $v_j$ is $f(v_i,v_j)$.

\vspace{0.4cm}
\begin{center}
\begin{tabular}{|l|l|l|l|}
\hline
\rowcolor{light-blue} 
\multicolumn{4}{|c|}{$\star$ has order $2$}  \\
\hline 
\rowcolor{grayish} 
\footnotesize $v_i \backslash v_j$ & $v_0$ & $v_1$ & $v_2$ \\
\hline
\cellcolor{grayish} $v_0$ & 1 & 1 & 1 \\
\cellcolor{grayish} $v_1$ & 2 & 5 & 1 \\
\cellcolor{grayish} $v_2$ & 1 & 3 & 2 \\
\hline
\end{tabular}
\hspace{0.8cm}
\begin{tabular}{|l|l|l|l|}
\hline
\rowcolor{light-blue} 
\multicolumn{4}{|c|}{$\star$ has order $3$}  \\
\hline 
\rowcolor{grayish} 
\footnotesize   $v_i \backslash v_j$ & $v_0$ & $v_1$ & $v_2$ \\
\hline
\cellcolor{grayish} $v_0$ & 1 & 1 & 1 \\
\cellcolor{grayish} $v_1$ & 2 & 7 & 1 \\
\cellcolor{grayish} $v_2$ & 1 & 4 & 3 \\
\hline
\end{tabular}
\end{center}

 One can check that \Cref{eq:GenMutat} holds for these values as well as other skein relations. For instance, when $\star$ is order 3, we see \[
 f(v_1,v_1) f(v_2,v_2) = 7 \times 3 = 4^2 + 1 \times 4 \times 1 + 1^2 = f(v_2,v_1)^2 + \lambda_3 f(v_2,v_1)f(v_1,v_2) + f(v_1,v_2)^2.\]
\end{example}

Knowing that friezes on $P_n^\star$ are unitary when $\star$ is order 2 or 3 allows us to identify each with a finite positive integral frieze pattern. 

\begin{corollary}~

\begin{enumerate}
    \item There is a 1-1 correspondence between positive integral frieze patterns with width $2n$ and principal growth coefficient 0 and friezes on $P_n^\star$ where $\star$ has order 2.
    \item There is a 1-1 correspondence between positive integral frieze patterns with width $3n$ and principal growth coefficient 1 and friezes on $P_n^\star$ where $\star$ has order 3.
\end{enumerate}
\end{corollary}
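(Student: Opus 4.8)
The plan is to realize both sides of each correspondence as bijective images of a single set --- the rotation-invariant triangulations of a larger polygon --- and then compose the two resulting bijections. All of the substance has effectively been established in \Cref{thm:CCFrieze}, \Cref{prop:SortFriezeByGrCo}, \Cref{prop:WidthOfFPFromDissection}, and \Cref{prop:OrbifoldFriezesUnitaryOrder2}/\Cref{prop:OrbifoldFriezesUnitaryOrder3}; what remains is to glue these statements together.

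For item (1): by \Cref{thm:CCFrieze} positive integral frieze patterns of width $2n$ are in bijection with triangulations of $P_{2n}$, and under this bijection \Cref{prop:SortFriezeByGrCo} matches the patterns with principal growth coefficient $0$ exactly with the triangulations of $P_{2n}$ having $2$-fold rotational symmetry. On the orbifold side, a triangulation of $P_n^\star$ with $\star$ of order $2$ is, by definition of arcs on orbifolds recalled in \Cref{subsec: frieze patterns from orbifolds}, a maximal set of arcs on $P_{2n}$ invariant under $\pi$-rotation, i.e.\ a centrally symmetric triangulation of $P_{2n}$; and \Cref{prop:OrbifoldFriezesUnitaryOrder2} says every positive integral frieze on $P_n^\star$ equals $f_T$ for a unique such $T$. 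Composing yields the desired bijection: to a frieze pattern $F$ one assigns the unique $2$-fold symmetric triangulation $T$ of $P_{2n}$ with $F = F_T$, regards $T$ as a triangulation $\overline{T}$ of $P_n^\star$, and outputs the unitary orbifold frieze $f_{\overline{T}}$; the inverse lifts a triangulation of $P_n^\star$ to its centrally symmetric preimage in $P_{2n}$ and applies \Cref{thm:CCFrieze}. That the frieze pattern produced from an orbifold frieze this way indeed has width $2n$ and principal growth coefficient $0$ is exactly \Cref{prop:WidthOfFPFromDissection} together with \Cref{prop:SortFriezeByGrCo}.

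Item (2) is proved verbatim the same way, with $P_{2n}$ replaced by $P_{3n}$, ``$2$-fold'' by ``$3$-fold,'' the growth coefficient $0$ by $1$, \Cref{prop:OrbifoldFriezesUnitaryOrder2} by \Cref{prop:OrbifoldFriezesUnitaryOrder3}, and the relevant item of \Cref{prop:SortFriezeByGrCo}. The only step that requires any thought is confirming that lifting and projecting triangulations really is a bijection between triangulations of $P_n^\star$ and $p$-fold symmetric triangulations of $P_{pn}$, including the correspondence between pending arcs $(v_i,v_i)$ and diameters $(v_i,v_{n+i})$ --- but this is immediate from the definitions recalled above, so I do not anticipate a genuine obstacle. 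If one wanted the stronger, value-level statement identifying $f_{\overline{T}}(\tau)$ with a particular entry $F_T(i,j)$, one would compare the Broline--Crowe--Isaacs matching count of \Cref{thm:CombinatorialInterpretation} on $P_{pn}$ against the skein recursion on $P_n^\star$; however, this refinement is not needed for the stated $1$--$1$ correspondence, and routing everything through the already-established bijections with triangulations sidesteps it entirely.
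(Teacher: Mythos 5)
Your proposal is correct and follows essentially the same route as the paper: the paper's proof also chains \Cref{prop:SortFriezeByGrCo} (frieze patterns with the given growth coefficient correspond to symmetric triangulations of $P_{2n}$ or $P_{3n}$), the identification of such symmetric triangulations with triangulations of $P_n^\star$, and \Cref{prop:OrbifoldFriezesUnitaryOrder2} or \Cref{prop:OrbifoldFriezesUnitaryOrder3} to land on orbifold friezes. Your version merely spells out the intermediate appeals to \Cref{thm:CCFrieze} and the lifting of triangulations a bit more explicitly, which the paper leaves implicit.
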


\begin{proof}
By \Cref{prop:SortFriezeByGrCo}, finite frieze patterns with principal growth coefficient 0 are in 1-1 correspondence with centrally symmetric triangulations, and these are in correspondence with triangulations of $P_n^\star$ where $\star$ is order 2. By \Cref{prop:OrbifoldFriezesUnitaryOrder2}, these triangulations are in 1-1 correspondence with friezes on $P_n^\star$. This shows statement (1), and statement (2) is analogous, where we instead use \Cref{prop:OrbifoldFriezesUnitaryOrder3}.
\end{proof}

\section{Frieze Patterns and Dyck Paths}
\label{sec: frieze patterns and dyck paths}

We have so far explored correspondences between dissections of polygons and frieze patterns with a particular eye towards the compatibility of the symmetries of each object. In this section, we will export this narrative  to the setting of Dyck paths. In particular, we will highlight the action on $m$-Dyck paths induced by rotation of $(m+2)$-angulations of polygons. 


A Dyck path is a certain type of lattice path. These are a well-known family of Catalan objects and hence are equinumerous  with triangulations of polygons and finite, positive integral frieze patterns. In order to discuss  $(m+2)$-angulations for $m > 1$, we will recall the more general notion of $m$-Dyck paths.

\begin{definition}
\label{def: m-dyck paths}
An \emph{$m$-Dyck path of order $\ell$} is a sequence of steps in direction $(0,1)$ (denoted $U$) or $(1,0)$ (denoted $R$) from $(0,0)$ to $(m\ell,\ell)$ which stays above the $y = \frac{1}{m} x$ diagonal line. 
\end{definition}

See the left-hand side of \Cref{fig:prop-2:etherington-to-dyck-path} for an example of a 2-Dyck path. We remark that  it is natural to conflate the lattice paths of an $m$-Dyck path with the $m$-Dyck path itself, and we will do this at times. 

When $m = 1$, \Cref{def: m-dyck paths} recovers that of standard Dyck paths. In general, the number of $m$-Dyck paths of order $\ell$ is equal to the Fuss-Catalan number $c_\ell^{(m)}$. 

Let $D = w_1,\ldots,w_{d}$ be an $m$-Dyck path; i.e., each $w_i \in \{U,R\}$. It is useful to note that the property of staying above the line $y = \frac{1}{m}x$ is equivalent to the property that the number of $R$'s in any initial subsequence $w_1,\ldots,w_k$ is no greater than $m$ times the number of $U$'s. Such a sequence can be referred to as \emph{$m$-ballot} or \emph{$m$-Yamaouchi}.

Our first goal is to recall a bijection between $m$-Dyck paths of order $\ell$ and $(m+2)$-angulations of $P_{m\ell+2}$, described in \cite{etherington1940some}; see also the description in \cite{stanley1996polygon}.

Recall the vertices of $P_n$ are labeled $v_0,\ldots,v_{n-1}$ in clockwise order. 
Let $\mathcal{T}^{(m)}_{m\ell + 2}$ denote the set of $(m+2)$-angulations of $P_{m\ell + 2}$ and let $\mathcal{D}^{(m)}_{\ell}$ denote the set of $m$-Dyck paths of order $\ell$.

\begin{definition}
\label{def: brow-bijection}
We define the \emph{browse} map $\brow: \mathcal{T}_{m\ell + 2}^{(m)} \to \mathcal{D}^{(m)}_{\ell}$ by associating to each $T \in \mathcal{T}_{m\ell + 2}^{(m)}$ the word $\brow(T)$ determined by performing the following steps in the prescribed order.
Walk clockwise around $P_{m\ell+2}$, beginning at $v_0$ and visiting each vertex. During the \emph{visit} of $v_i$, survey the incident subgons, sweeping in counterclockwise order. 
There are three possible cases when we encounter a subgon $Q$, and we outline the effect to the word in \Cref{fig:Etherington-seq-cases}.
\end{definition}

\begin{figure}[ht]
    \centering
\scalebox{0.95}{
    \begin{tabular}{|l|l| } 
    \hline
    \rowcolor{light-blue}
    \footnotesize Case for an $(m+2)$-gon $Q$ incident to a vertex of $P_n$ 
    & \footnotesize  Modification to the word \\
    \hline
    \footnotesize \textbf{Case 1:} This is the first time we see $Q.$       
    & \footnotesize Append a $U$. \\ 
    \footnotesize \textbf{Case 2:} This neither the first time nor the $(m+2)$-th time we see $Q.$     
    & \footnotesize Append an $R$. \\ 
    \footnotesize \textbf{Case 3:} This is the $(m+2)$-th time we see $Q.$     
    & \footnotesize  Append nothing. \\
    \hline
    \end{tabular} }
    \caption{An outline of the choices for building an $m$-Dyck path from an $(m+2)$-angulation of a polygon. }
    \label{fig:Etherington-seq-cases}
\end{figure}

Using the algorithm in \Cref{def: brow-bijection} yields the $2$-Dyck path on the right of \Cref{fig:prop-2:etherington-to-dyck-path} from the 4-angulation on the left. For instance, $v_0$ is incident to two subgons and naturally this is the first time we seen each subgon, hence the Dyck path begins with two up steps. Then, $v_1$ is incident to one of these subgons which was already viewed by $v_0$ and a new subgon, so the path continues with a right step and another up step.

The fact that the word $\brow(T)$ indeed determines an $m$-Dyck path was proven in \cite{etherington1940some}.
We will next define a map which will be shown to be the inverse to the browse map. First, we introduce some terminology and conventions for $m$-Dyck paths.
Call a lattice point on a Dyck path a \emph{corner} if it is incident to both an up step and a right step and let all other lattice points be \emph{non-corners}. 

\begin{definition}
\label{def: balance lines}
    Given an $m$-Dyck path of order $\ell$, create a multiset of size $\ell-1$ of lines of slope $\frac{1}{m}$, with one passing through the lowest point of each up step, excluding the first up step. The lines in the multiset are called \emph{balance lines}.\footnote{These are quite similar to paths considered in \cite{bergeron2011higher}.} 
\end{definition}

There are four balance lines drawn with dotted lines on the left in \Cref{fig:prop-2:etherington-to-dyck-path}, where the middle one comes with multiplicity 2 as it goes through two corners. The teal balance line and its first intersection with a non-corner will be of extra importance in \Cref{thm: rotation of n-gon and shifted Dyck path}, hence the star at its end.


Let $l$ be a balance line of an $m$-Dyck path. If $l$ begins at point $(i,i')$ and first intersects a non-corner of $D$ at $(j,j')$ then we \emph{label} $l$ with the pair $(i,j+1)$. For instance, the teal balance line in \Cref{fig:prop-2:etherington-to-dyck-path} is labeled $(0,9)$, the two red balance lines are labeled $(1,8)$ and $(3,8)$, and the orange balance line is labeled $(3,6)$.  One subtle point worth emphasizing is that each up step which is incident to the diagonal $y = \frac{1}{m}x$ has its first intersection with a non-corner at the terminal point $(m\ell,\ell)$. Therefore, each such balance line is labeled $(i,m\ell + 1)$ where $i$ is necessarily positive by our convention in \Cref{def: balance lines}.

\begin{definition}\label{def:rtn2}
We define the \emph{return} map $\rtn: \mathcal{D}^{(m)}_{\ell} \to  \mathcal{T}_{m\ell + 2}^{(m)}$ by associating to each $m$-Dyck path $D$ the $(m+2)$-angulation $\rtn(w)$ given by diagonals $\{(v_i,v_j)\}$ where $\{(i,j)\}$ is the set of labels of balance lines of $D$. 
\end{definition}

The following also implies that if $D$ is an $m$-Dyck path of order $\ell$, then $\rtn(D) \in \mathcal{T}^{(m)}_{m\ell+2}$.

\begin{proposition}
\label{prop:EthSequenceTom-Angulation}
The maps $\brow$ and $\rtn$ are inverse bijections.
\end{proposition}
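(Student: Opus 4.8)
The plan is to show that $\rtn \circ \brow$ is the identity on $\mathcal{T}^{(m)}_{m\ell+2}$; since both sets are finite of the same cardinality $c^{(m)}_\ell$ (the number of $(m+2)$-angulations of $P_{m\ell+2}$ equals the number of $m$-Dyck paths of order $\ell$), this forces $\brow$ to be a bijection with inverse $\rtn$. So the heart of the argument is a single computation: given $T \in \mathcal{T}^{(m)}_{m\ell+2}$, track where each diagonal of $T$ goes under $\brow$ and verify that $\rtn$ recovers exactly that diagonal. The fact that $\brow(T)$ is a genuine $m$-Dyck path is already established in \cite{etherington1940some}, so I may take that for granted.

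First I would set up the correspondence between diagonals of $T$ and up steps of $\brow(T)$. In the browse walk, the first time we encounter a subgon $Q$ we append a $U$; there are $\ell$ subgons, hence $\ell$ up steps, but the very first up step corresponds to the subgon containing $v_0$ on its ``first-seen'' side and is excluded in \Cref{def: balance lines}. The remaining $\ell-1$ up steps are in bijection with the $\ell-1$ internal diagonals of $T$: when we first see a subgon $Q$ while visiting vertex $v_i$ (and $Q$ is not the initial subgon), the diagonal of $Q$ that separates $Q$ from the region already surveyed is precisely an edge $(v_a, v_i)$ or, more carefully, the diagonal ``bounding $Q$ on the clockwise side'' — I would pin this down by noting that the subgon $Q$ is first seen at the vertex $v_i$ which is the clockwise-most vertex of $Q$, so that the diagonal $(v_{i'}, v_i)$ closing off $Q$ on that side is the one whose lower endpoint index $i'$ is determined by the structure. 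The key geometric claim to isolate and prove is: \emph{the up step first-seeing $Q$ starts at lattice height equal to the number of subgons already fully or partially surveyed, and the balance line through that point, extended at slope $1/m$, first hits a non-corner exactly when we have finished surveying all subgons ``enclosed by'' $Q$ together with $Q$ itself} — i.e. at the visit of the other endpoint of the bounding diagonal of $Q$. Translating the counting rules of \Cref{fig:Etherington-seq-cases} (each $(m+2)$-gon contributes one $U$ and $m$ $R$'s over its lifetime, in that interleaved pattern) into lattice coordinates gives exactly the slope-$1/m$ bookkeeping, so the balance line's label $(i, j+1)$ reads off the two endpoints of the bounding diagonal of $Q$.

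The main obstacle will be making the informal phrase ``subgons enclosed by $Q$'' and the matching between an up step and ``the diagonal that closes off the newly seen subgon'' fully rigorous, including the boundary/edge cases: up steps incident to the line $y = \frac1m x$, whose balance line only meets a non-corner at the terminal point $(m\ell, \ell)$ and hence gets label $(i, m\ell+1)$, must be matched with diagonals of the form $(v_i, v_{m\ell+1}) = (v_i, v_0)$ incident to $v_0$ — and one must check that the recursion nests correctly (a diagonal enclosing a subregion corresponds to a balance line dominated by the balance lines of the diagonals inside that subregion). I would handle this by induction on $\ell$: peel off the subgon $Q_0$ containing the edge $(v_{m\ell+1}, v_0) = (v_0 \text{'s first boundary edge})$, observe that $\brow$ restricted to each of the $m$ (or fewer) smaller polygons cut off by the non-boundary edges of $Q_0$ agrees with the browse map there (up to a shift of vertex labels and an offset in the lattice path), and that the balance lines of $\brow(T)$ decompose accordingly. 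Applying the inductive hypothesis to each smaller piece recovers all diagonals interior to those pieces, and the direct analysis above recovers the at most $m$ diagonals bounding $Q_0$; together these are all $\ell - 1$ diagonals of $T$, and each is returned with the correct label. Hence $\rtn(\brow(T)) = T$, and by finiteness and equicardinality the two maps are mutually inverse bijections. $\qed$
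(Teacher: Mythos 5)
Your overall route is legitimate and even slightly different from the paper's: you prove only $\rtn\circ\brow=\mathrm{id}$ and then invoke finiteness plus the equicardinality $|\mathcal{T}^{(m)}_{m\ell+2}|=|\mathcal{D}^{(m)}_{\ell}|=c^{(m)}_{\ell}$ to get mutual inverseness, whereas the paper cites Etherington directly for bijectivity of $\brow$ and then checks the inverse relation. That logical frame is fine. The problem is in the one place where the actual content lives: the identification of which diagonal of $T$ each balance line recovers, and there your bookkeeping is wrong. A subgon $Q$ is first seen at its \emph{minimal}-index vertex $v_i=v(Q)$ (the walk visits $v_0,v_1,\dots$ in order), so every other vertex of $Q$ has index larger than $i$; consequently there is no edge of $Q$ of the form $(v_{i'},v_i)$ with $i'<i$ ``closing off $Q$ on the clockwise side,'' and $v_i$ is not the clockwise-most vertex of $Q$. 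The diagonal that the balance line based at $Q$'s up step must recover is $(v_i,v_j)$ where $v_j$ is the \emph{maximal}-index vertex of $Q$, i.e.\ the edge of $Q$ joining its first- and last-visited vertices (this is exactly the edge separating $Q$ from the already-surveyed region, and it is a diagonal for every subgon except the one containing the boundary edge $(v_{n-1},v_0)$, whose up step is the excluded first one). Your later ``key geometric claim'' --- that the balance line is first crossed at the visit of the other endpoint of the bounding diagonal --- is the right statement (it is the paper's key step: the path only passes back under the balance line after $Q$ has been seen all $m+2$ times), but it is incompatible with your own identification of the bounding diagonal: with an endpoint $v_{i'}$, $i'<i$, that vertex was visited \emph{before} the up step exists, so the balance line could never ``first hit'' anything there. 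Since the entire proposition amounts to verifying precisely this correspondence, leaving it as ``determined by the structure'' with the orientation reversed is a genuine gap, not a cosmetic one.

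The same confusion infects your boundary case: a balance line labelled $(i,m\ell+1)$ corresponds to the diagonal $(v_i,v_{m\ell+1})$, which is incident to the \emph{last} vertex $v_{m\ell+1}=v_{n-1}$ of $P_{m\ell+2}$ (the polygon has $m\ell+2$ vertices indexed $0,\dots,m\ell+1$); your equation $(v_i,v_{m\ell+1})=(v_i,v_0)$ is false, and these diagonals are not the ones incident to $v_0$. Once the correspondence is stated correctly (up step for $Q$ at abscissa $i=\min Q$; crossing after the $(m+2)$-nd sighting of $Q$, i.e.\ during the visit of $\max Q$, which accounts for the $+1$ in the label), your inductive peeling of the root subgon would go through and would in fact supply more detail than the paper, which declares this verification routine after stating the crossing claim.
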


\begin{proof}

The fact that $\brow$ is bijective follows from Etherington \cite{etherington1940some}. Note that one can translate between the construction therein and \Cref{def: brow-bijection} by using an $m$-generalization of the usual bijection between parenthesizations and 
Dyck paths.

Showing $\brow$ and $\rtn$ are inverse maps is routine. The key step is the following. Given $T \in \mathcal{T}^{(m)}_n$, suppose $(i,i')$ is the base of an up step in $D = \brow(T)$ stemming from seeing subgon $Q$. The path $D$ will only pass back under the balance line which begins at $(i,i')$ in a step that corresponds to a visit occurring after we have seen $Q$ all $m+2$ times. 
 
\end{proof}

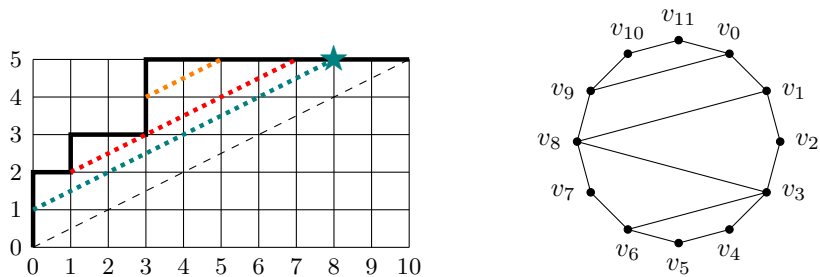
\begin{figure}[ht]
    \centering
\begin{tabular}{cc}
\begin{tikzpicture}[scale=0.5]
    \foreach \i in {0,...,10} {
        \draw [thin,black] (\i,0) -- (\i,5)  node [below] at (\i,0) {\scriptsize $\i$};
    }
    \foreach \i in {0,...,5} {
        \draw [thin,black] (0,\i) -- (10,\i) node [left] at (0,\i) {\scriptsize $\i$};
    }
    
    \draw[dashed] (0,0) -- (10,5);
    \draw[line width=0.6mm] (0,0) -- (0,2) 
                                  -- (1,2) 
                                  -- (1,3) 
                                  -- (3,3) 
                                  -- (3,5) 
                                  -- (10,5);
    \draw[line width=0.6mm, dotted, teal]   (0,1) -- (8,5);
    \draw[line width=0.6mm, dotted, red]    (1,2) -- (7,5);
    \draw[line width=0.6mm, dotted, orange] (3,4) -- (5,5);
    \node at (8,5) {$\textcolor{teal}{\bigstar}$};
\end{tikzpicture}
&
\begin{tikzpicture}[scale = 0.5]
 \newdimen\R
    \R=2.7cm
    
 \draw (30:\R) \foreach \x in {60,90,...,390} {  -- (\x:\R) };  
    \foreach \x/\l/\p in
     {
      60/{\footnotesize  $v_0$}/above,
      90/{\footnotesize  $v_{11}$}/above,
      120/{\footnotesize  $v_{10}$}/above,
      150/{\footnotesize  $v_9$}/left,
      180/{\footnotesize  $v_8$}/left,
      210/{\footnotesize  $v_7$}/left,
      240/{\footnotesize  $v_6$}/below,
      270/{\footnotesize  $v_5$}/below,
      300/{\footnotesize  $v_4$}/below,
      330/{\footnotesize  $v_3$}/right,
      360/{\footnotesize  $v_2$}/right,
      390/{\footnotesize $v_1$}/right
     }
   \node[inner sep=1pt,circle,draw,fill,label={\p:\l}] at (\x:\R) { };

   \draw(60:\R) -- (150:\R);
   \draw(30:\R) -- (180:\R) -- (-30:\R)--(240:\R);
\end{tikzpicture}\\
\end{tabular}
    \caption{On the left, we have a 2-Dyck path of order 5 and on the right, we have the corresponding 4-angulation of a 12-gon. These are related by the maps $\brow$ and $\rtn$. }
    \label{fig:prop-2:etherington-to-dyck-path}
\end{figure}

The definition of the return map suggests how one could directly construct a frieze pattern from a Dyck path. In fact, this was recently done in the ordinary case (i.e. $m = 1$) by Ca{\~n}adas,  Espinosa, Gaviria, and Rios \cite{canadas2023coxeter}. Our result, for general $m$, recovers theirs when $m = 1$. First, we give two statistics on $m$-Dyck paths. 

\begin{definition}
\label{def: num of balance-lines and num of up-steps}
    Let $D$ be an $m$-Dyck path. 
    For each $0 \leq i \leq m\ell + 1$, let $\text{up}_D(i)$ denote the number of up steps of the form $(i,j) - (i,j+1)$, $j > 0$, in $D$, and let $\text{bal}_D(i)$ (short for balance) denote the number of times a balance line has its first intersection with a non-corner point at a point $(i-1,j)$.
\end{definition}

 For example, in the Dyck path in \Cref{fig:prop-2:etherington-to-dyck-path}, $\text{up}_D(0) = \text{up}_D(1) = 1$,  $\text{up}_D(3) = 2$, $\text{bal}_D(6) = \text{bal}_D(9) = 1$, and $\text{bal}_D(8) = 2$. Since our Dyck path is defined in the $m\ell \times \ell$ rectangle, $\text{up}_D(m\ell + 1)$ and $\text{bal}_D(0)$ are always  $0$. Our convention on balance lines also implies that $\text{bal}_D(1)$ is always  $0$.

\begin{proposition}
\label{prop:FrizeFromDyck}
Let $D$ be an $m$-Dyck path of order $\ell$. The frieze pattern $F_{\rtn(D)}$ satisfies 
\[
F_{\rtn(D)}(i-1,i+1) = (\text{up}_D(i) + \text{bal}_D(i)+1) \lambda_{m+2}
\]
for all $0 \leq i \leq  m\ell + 1$.

 \end{proposition}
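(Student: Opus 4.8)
The plan is to translate the statement into a count of diagonals and then identify that count with the two Dyck-path statistics. By \Cref{prop:EthSequenceTom-Angulation}, $\rtn(D)$ is a genuine $(m+2)$-angulation of $P_{m\ell + 2}$, so every subgon of $\rtn(D)$ is an $(m+2)$-gon and \Cref{def:FriezePatternsFromDissections} gives $F_{\rtn(D)}(i-1,i+1) = \mu_{m+2}(i)\,\lambda_{m+2}$, where $\mu_{m+2}(i)$ is the number of subgons of $\rtn(D)$ incident to $v_i$. In any dissection of a polygon, the two boundary edges and the diagonals incident to a vertex $v$, listed in angular order, cut the interior near $v$ into (number of diagonals at $v$)$\,+\,1$ angular sectors, each lying in a distinct subgon; hence $\mu_{m+2}(i)$ equals one plus the number of diagonals of $\rtn(D)$ incident to $v_i$. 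So it remains to prove that this number of diagonals is $\text{up}_D(i) + \text{bal}_D(i)$.

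For that I would unwind \Cref{def:rtn2}: the diagonals of $\rtn(D)$ are exactly the arcs $(v_a,v_b)$ for $(a,b)$ a label of a balance line, and since $D$ has $\ell-1$ balance lines while $\rtn(D) \in \mathcal{T}^{(m)}_{m\ell+2}$ has $\ell - 1$ diagonals (again \Cref{prop:EthSequenceTom-Angulation}), the $\ell-1$ labels are pairwise distinct and each is an honest diagonal. A diagonal $(v_a, v_b)$ is incident to $v_i$ precisely when $a = i$ or $b = i$, and no diagonal satisfies both since $a \neq b$. By \Cref{def: balance lines} balance lines correspond to the up steps of $D$ other than the first up step, with the first coordinate of the label equal to the column of the base of the up step; because an $m$-Dyck path must start with an up step from $(0,0)$ to $(0,1)$, the excluded first up step is precisely the one sitting in column $0$ at height $0$, which is exactly the up step omitted in the definition of $\text{up}_D(0)$ in \Cref{def: num of balance-lines and num of up-steps}, and for $i \ge 1$ every up step in column $i$ begins at positive height. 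Hence the number of labels with first coordinate $i$ is $\text{up}_D(i)$. On the other hand, a label has second coordinate $i$ exactly when the balance line first meets a non-corner of $D$ at a point with $x$-coordinate $i-1$, so by \Cref{def: num of balance-lines and num of up-steps} the number of such labels is $\text{bal}_D(i)$. Adding the two counts gives the claim, and hence $\mu_{m+2}(i) = \text{up}_D(i) + \text{bal}_D(i) + 1$.

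The bookkeeping in the middle step is where I expect the only real care to be needed: one must confirm that ``excluding the first up step'' in \Cref{def: balance lines} matches exactly the ``$j>0$'' restriction in the definition of $\text{up}_D$, that no label has its two coordinates equal or coincides with a boundary edge of $P_{m\ell+2}$ (which is covered by knowing $\rtn(D)$ is a legitimate $(m+2)$-angulation), and that the endpoint cases behave, for instance $\text{bal}_D(0) = 0$ and $\text{up}_D(m\ell+1) = 0$ since the path has no steps in column $m\ell+1$. The geometric fact that a polygon vertex meets (incident diagonals)$\,+\,1$ subgons is standard, and I would include it only as the one-line angular-sector remark above rather than belabor it.
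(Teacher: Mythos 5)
Your proposal is correct and follows essentially the same route as the paper's proof: rewrite the quiddity entry as $(\deg_{\rtn(D)}(v_i)+1)\lambda_{m+2}$ and then identify diagonals at $v_i$ with balance lines labeled $(i,j)$ (counted by $\text{up}_D(i)$) and $(j,i)$ (counted by $\text{bal}_D(i)$). Your extra bookkeeping (matching the excluded first up step with the $j>0$ restriction, and distinctness of labels via $\rtn(D)$ being a genuine $(m+2)$-angulation) only makes explicit details the paper leaves implicit.
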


 \begin{proof}
Set $T:= \rtn(D)$. Since $T$ is an $(m+2)$-angulation, following \Cref{def:FriezePatternsFromDissections}, the value $F_T(i-1,i+1)$ is $(\deg_T(v_i)+1)\lambda_{m+2}$ where $\deg_T(v_i)$ is the number of diagonals of $T$ incident to $v_i$. There is one diagonal based at $v_i$ for each balance line which is labeled $(i,j)$ or $(j,i)$. The statistic $\text{up}_D(i)$ counts the number of the first type of balance line and the statistic $\text{bal}_D(i)$ counts the number of the scond type of balance line. 
\end{proof}

In particular, combining \Cref{thm:HJ} and \Cref{prop:EthSequenceTom-Angulation}, we have a bijection between $m$-Dyck paths of order $\ell$ and frieze patterns of type $\Lambda_{m+2}$ with width $m\ell + 2$. For example, the 2-Dyck path on the left in \Cref{fig:prop-2:etherington-to-dyck-path} is in correspondence with the frieze pattern in \Cref{fig:FriezePatternFromDyckPath}. It would be interesting to find deeper connections between these combinatorial objects.

\begin{quest}
Given an $m$-Dyck path $D$, how can one interpret $F_{\rtn(D)}(i,j)$ for general $i < j$ in terms of $D$?
\end{quest}

\begin{figure}
\scalebox{0.75}{
\centering
$\begin{array}{ccccccccccccccccccccccccc}
   &0&&  0&&  0&&  0&&  0&&  0&&  0&&  0 &&0&&  0&&  0&&  0&\\
  &&  1&&  1&&  1&&  1&&  1&&  1&&  1&&1&&  1&&  1&&  1&&  1\\
 &2\sqrt{2}&&2\sqrt{2}&&\sqrt{2}&&  3\sqrt{2}&&  \sqrt{2}&&  \sqrt{2}&&  2\sqrt{2}&&\sqrt{2}&& 3\sqrt{2} && 2\sqrt{2} && \sqrt{2} && \sqrt{2} &\\
  &&7&&  3&&  5&& 5&&  1&&  3&&3&&5&& 11&&3&&1&&3\\ &5\sqrt{2}&&5\sqrt{2}&&7\sqrt{2}&&  4\sqrt{2}&&  2\sqrt{2}&&  \sqrt{2}&&  2\sqrt{2}&&7\sqrt{2}&& 9\sqrt{2} && 8\sqrt{2} && \sqrt{2} && \sqrt{2} &\\
  &&7&&  23&&  11&& 3&&  3&&  1&&9&&25&& 13&&5&&1&&3\\
  &2\sqrt{2}&&16\sqrt{2}&&18\sqrt{2}&&  4\sqrt{2}&&  2\sqrt{2}&&  \sqrt{2}&&  2\sqrt{2}&&16\sqrt{2}&& 18\sqrt{2} && 4\sqrt{2} && 2\sqrt{2} && \sqrt{2} &\\
  &&9&&  25&&  13&& 5&&  1&&  3&&7&&23&& 11&&3&&3&&1\\
& 2\sqrt{2}&&7\sqrt{2}&& 9\sqrt{2} && 8\sqrt{2} && \sqrt{2} && \sqrt{2} &&5\sqrt{2}&&5\sqrt{2}&&7\sqrt{2}&&  4\sqrt{2}&&  2\sqrt{2}&&  \sqrt{2}&\\
&&3&&5&& 11&&3&&1&&3  &&7&&  3&&  5&& 5&&  1&&  3\\ 
&\sqrt{2}&& 3\sqrt{2} && 2\sqrt{2} && \sqrt{2} && \sqrt{2} &&2\sqrt{2}&&2\sqrt{2}&&\sqrt{2}&&  3\sqrt{2}&&  \sqrt{2}&&  \sqrt{2}&&  2\sqrt{2}&\\
  &&  1&&  1&&  1&&  1&&  1&&  1&&  1&&1&&  1&&  1&&  1&&  1\\
   &0&&  0&&  0&&  0&&  0&&  0&&  0&&  0 &&0&&  0&&  0&&  0&\\
 \end{array}$
}
\caption{The frieze pattern of type $\Lambda_4$ associated to the 4-angulation in \Cref{fig:prop-2:etherington-to-dyck-path}. One could also regard this as a frieze pattern associated to the 2-Dyck path in the same figure.}\label{fig:FriezePatternFromDyckPath}
\end{figure}

We now have three families of objects in bijection: $(m+2)$-angulations, finite frieze patterns of type $\Lambda_{m+2}$, and $m$-Dyck paths. In \Cref{lem:ShiftAndRotate}, we saw the effect that cyclic rotation of an $(m+2)$-angulation induced on a frieze pattern. In \Cref{thm: rotation of n-gon and shifted Dyck path}, we will do the same for $m$-Dyck paths. We first define the proposed action. 

\begin{definition}
\label{def: rotation of an n-gon and shifted Dyck path}
Define the map
\begin{align*}
    \tilderot:\calD^{(m)}_\ell \to \calD^{(m)}_\ell
\end{align*}
where for $D\in\calD^{(m)}_\ell$ the $m$-Dyck path, $\tilderot(D),$ is obtained by the following process:
    \begin{enumerate}
        \item Let $k+1$ be the position of the first $R$ in $D$.
        \item Let $p_1,\ldots,p_{k-1}$ be the first non-corner intersection points of the balance lines with labels $(0,j)$. Append a $U$ after each $R$ step which begins at a point $p_i$. 
        \item Delete the first $k+1$ letters in $D$. This sequence will be $U^{k}R.$
        \item Prepend a $U$ and append an $R.$
    \end{enumerate}
\end{definition}

\begin{remark}\label{rmk: rotation of an n-gon and shifted Dyck path rounds2}
It can be convenient to rephrase part (2) of \Cref{def: rotation of an n-gon and shifted Dyck path} without appealing to the lattice path representing $D$. 
Let $D = w_1,\ldots,w_d$ be an $m$-Dyck path and let $k+1$ be the smallest number such that $w_{k+1} = R$. Define $h: \{U,R\} \to \mathbb{Z}$ by $h(U) = m$ and $h(R) = -1$, and define the \emph{height sequence} $H_D: [d] \to \mathbb{Z}$ by $H_D(j) = \sum_{i=1}^j h(w_i)$. The $m$-ballot property guarantees that $H_D(j) \geq 0$ for all $j \in [d]$. 

Then, the definition of $\tilderot$ can be rephrased by replacing (2) with (2') below:
    \begin{enumerate}
        \item[(2')] For each $1 \leq i \leq k-1$, let $p_i'>k$ be the smallest integer such that $H_D(p_i') < mi$. Form a length $d+(k-1)$ binary word by placing a $U$ at all positions $p_i' + k-i$ and filling out the remaining positions with the given Dyck word, in the same order.
    \end{enumerate}

\end{remark}

\begin{remark}
    One might expect that, when $m = 1$, $\rottilde$ coincides with \emph{promotion} on Dyck paths. Such an action can be defined by the induced action of promotion on $2 \times n$ Standard Young Tableaux (SYT), and therefore this is another cyclic action on Dyck paths  \cite{Haiman}. One can quickly check that  these actions do not agree in general though. For instance, applying promotion in this sense to the 1-Dyck path $UUURRUR$ yields $UURRUURR$ whereas applying $\rottilde$ yields $URURUURR$ and applying $\rottilde^{-1}$ yields $UURUURRR$.
\end{remark}

Given $D \in \mathcal{D}_\ell^{(m)}$, consider the following operation.

\begin{enumerate}
    \item Let $z$ be the number of times $D$ intersects the line $y = \frac1m x$, including its start point but not its end point. 
    \item Delete each of the $z$ $U$'s whose starting point is on the line $y = \frac1m x$. 
    \item Delete the final $R$.
    \item Prepend $U^zR$.
\end{enumerate}

One can see that the above procedure is the inverse operation to $\rottilde$. In particular, $\rottilde$ is a bijection. One can rephrase this procedure, i.e. $\rottilde^{-1}$, in terms of words by noting that intersections of $D$ with the line $y = \frac1m x$ coincide with indices where the height sequence $H_D$ is 0.

Our final goal is to show that $\rottilde$ is the map on Dyck paths induced by rotation of $m$-angulations. Recall $\sigma$ denotes counterclockwise rotation by $2\pi/n$ for an $n$-gon.

\begin{theorem}
\label{thm: rotation of n-gon and shifted Dyck path}
    The following diagram commutes for all $\ell > 0$:
    \begin{center}
    \begin{tikzpicture}[>=Stealth, auto]
    \node (B) at (-2.25, 2)  {$\calT^{(m)}_{m\ell+2}$};
    \node (C) at (2.25, 2)  {$\calD^{(m)}_\ell$};


    \node (Y) at (-2.25, 0)  {$\calT^{(m)}_{m\ell+2}$};
    \node (Z) at (2.25, 0)  {$\calD^{(m)}_\ell$};

    \draw[->] (B) -- (C) node[midway, above] {$\brow$};


    \draw[->] (Y) -- (Z) node[midway, below] {$\brow$};
    
    \draw[->] (B) -- (Y) node[midway, left] {$\rott$};
    \draw[->] (C) -- (Z) node[midway, right] {$\widetilde{\mathsf{rot}}$};
    \end{tikzpicture}
    \end{center}
\end{theorem}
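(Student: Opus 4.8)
The plan is to prove the equivalent identity $\brow(\sigma(T)) = \tilderot(\brow(T))$ for every $T \in \mathcal{T}^{(m)}_{m\ell+2}$. Since $\brow$ and $\rtn$ are mutually inverse bijections (\Cref{prop:EthSequenceTom-Angulation}) and $\sigma$ is invertible, this identity is equivalent to commutativity of the diagram, and because $\brow$ is surjective it simultaneously shows that $\tilderot$ is well defined on all of $\calD^{(m)}_\ell$. Write $n = m\ell + 2$. The starting point is that, since $\sigma$ merely relabels $v_i \mapsto v_{i-1}$, the word $\brow(\sigma(T))$ is exactly the word produced by running the browse algorithm of \Cref{def: brow-bijection} on $T$ itself but with the clockwise walk based at $v_1$, i.e.\ visiting vertices in the cyclic order $v_1, v_2, \ldots, v_{n-1}, v_0$ and counting sightings of each subgon with respect to this order. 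So the whole problem reduces to understanding how the browse word changes when the basepoint of the walk is advanced by one vertex.

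First I would record the block structure. Write $\brow(T) = B_0 B_1 \cdots B_{n-1}$, where $B_j$ is the (possibly empty) string appended while visiting $v_j$. At $v_0$ every incident subgon is met for the first time, so $B_0 = U^k$ with $k := \deg_T(v_0) + 1$ the number of subgons at $v_0$; in particular $\brow(T)$ begins $U^k R$, so the integer $k$ appearing in \Cref{def: rotation of an n-gon and shifted Dyck path} equals $\deg_T(v_0)+1$. Listing the subgons at $v_0$ in the counterclockwise order browse meets them as $R_1, \ldots, R_k$, the subgon $R_k$ carries the boundary edge $(v_0, v_1)$, consecutive $R_i, R_{i+1}$ share a diagonal $(v_0, v_{c_i})$ with $c_1 > \cdots > c_{k-1} > 1$, and the minimal non-$v_0$ vertex of $R_i$ is $v_{c_i}$ for $i<k$ and $v_1$ for $i=k$. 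Comparing the $v_0$-walk with the $v_1$-walk: a subgon not incident to $v_0$ meets the same vertices in the same order, so contributes the same letters; for each $R_i$ the cyclic list of sightings rotates by one step, so its first sighting (at $v_0$) becomes its last and loses its $U$, its second sighting (at its minimal non-$v_0$ vertex) becomes its first and its $R$ turns into a $U$, the intermediate sightings keep their $R$'s, and a new last sighting appears at its maximal vertex, contributing one fresh $R$. Hence $B_0$ disappears, $B_0' = \emptyset$, and each $B_j$ with $j \ge 1$ is altered by at most one such local surgery; the resulting word $B_1' \cdots B_{n-1}' B_0'$ is $\brow(\sigma(T))$.

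The remaining step is to verify that Steps (1)--(4) of $\tilderot$ carry out precisely this surgery. Step (1) extracts $k$, the length of the initial $U$-run. By the explicit description of $\rtn$ in \Cref{def:rtn2}, the $k-1$ balance lines labeled $(0,\cdot)$ are exactly those corresponding to the $k-1$ diagonals $(v_0, v_{c_1}), \ldots, (v_0, v_{c_{k-1}})$ at $v_0$, so the first non-corner intersection $p_i$ of the line labeled $(0, c_i)$ has $x$-coordinate $c_i - 1$; and straight from the browse algorithm one checks that the $R$-step beginning at $p_i$ is the letter recording the second sighting of $R_i$ (the unique subgon at $v_0$ with minimal non-$v_0$ vertex $v_{c_i}$). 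Thus Step (2), which inserts a $U$ immediately after this $R$-step, simultaneously for all $i$ as in \Cref{rmk: rotation of an n-gon and shifted Dyck path rounds2}, implements the ``second sighting of $R_i$ turns into a $U$'' change for $i = 1, \ldots, k-1$, once the $R$'s in the affected runs are reattributed to their neighbours' sighting counts. Step (3), deleting the first $k+1$ letters $U^k R$, removes the block $B_0$ together with the first $R$, which is the old second-sighting letter of $R_k$. Step (4) prepends the $U$ that is $R_k$'s new first sighting---legitimately at the very front, since $R_k$ is the first subgon met on the $v_1$-walk---and appends the $R$ that is $R_k$'s new last sighting. Performing Steps (2)--(4) in this order transforms $\brow(T)$ into $B_1' \cdots B_{n-1}'$, which is $\brow(\sigma(T))$.

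The conceptual content above---``advancing the basepoint cyclically rotates the sighting sequence of each subgon at $v_0$''---is short; the main obstacle is the bookkeeping in the last paragraph, namely checking that the few insert/delete/prepend/append operations of $\tilderot$ land at exactly the positions dictated by the $k$-fold family of per-subgon surgeries. The delicate points are: (i) that the $R$-step ``beginning at $p_i$'' coincides with the second-sighting letter of $R_i$, which requires combining the balance-line description of $\rtn$ with the fact that the initial $U$-run has length $\deg_T(v_0)+1$; (ii) that carrying out the Step (2) insertions simultaneously does not disturb the indices locating the subsequent operations; and (iii) that the surplus $R$'s are correctly reabsorbed into neighbouring sighting counts. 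A cleaner route that sidesteps per-subgon attribution is to compare $\tilderot(\brow(T))$ and $\brow(\sigma(T))$ position by position via the height function $H_D$ of \Cref{rmk: rotation of an n-gon and shifted Dyck path rounds2} (for which Step (2) has the reformulation (2')) and verify the two height sequences agree; alternatively, one can compare just the quiddity rows using \Cref{prop:FrizeFromDyck} and \Cref{lem:ShiftAndRotate} and invoke the bijectivity of \Cref{thm:HJ} to deduce that the underlying $(m+2)$-angulations coincide.
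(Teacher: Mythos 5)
Your skeleton matches the paper's: reduce to showing $\brow(\rott(T)) = \tilderot(\brow(T))$, view rotation as re-basing the browse walk at $v_1$, and identify the points $p_i$ with the $R$-steps recording second sightings of the subgons incident to $v_0$. However, the two decisive steps are asserted rather than proved, and you yourself flag them as the ``main obstacle.'' First, the claim in your point (i) --- that the $R$-step beginning at $p_i$ is precisely the second-sighting letter of the subgon whose second-smallest vertex is $v_{c_i}$ --- is the crux of the whole theorem and cannot be dismissed as ``straight from the browse algorithm'': the paper proves it by a height-sequence observation (the path only passes back under the balance line based at an up step for a subgon $Q$ after $Q$ has been seen all $m+2$ times, so the first non-corner return happens exactly at the second sighting of the next subgon around $v_0$). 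Second, your letter-level surgery does not literally coincide with the operations of $\tilderot$: the surgery converts the second-sighting $R$ of $R_i$ into a $U$ and creates a fresh $R$ at the maximal vertex of a \emph{different} subgon, whereas $\tilderot$ keeps the $R$ at $p_i$ and inserts a $U$ immediately after it. These yield the same string only because every intervening letter is an $R$ (later sightings), and that is exactly the ``reattribution'' you wave at without verifying; as written, the final paragraph is a plan, not a proof.

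The paper sidesteps all of this bookkeeping with one structural device you did not use: an $m$-Dyck path is determined by its up-step profile $\big(\text{up}_D(i)\big)_i$, so it suffices to check that $\brow(\rott(T))$ and $\tilderot(\brow(T))$ both have profile $\text{up}_{\brow(T)}(i+1)$, incremented by $1$ exactly at the positions $i = r_j$ (which, by the height argument above, are exactly the $x$-coordinates of the $p_j$, together with $i=0$ coming from the prepended $U$). If you want to salvage your write-up, that profile comparison is the cleanest completion; note that your alternative fallback via quiddity rows (\Cref{prop:FrizeFromDyck}, \Cref{lem:ShiftAndRotate}, \Cref{thm:HJ}) would in addition require controlling $\text{bal}$ of the rotated path, which is strictly more work than controlling $\text{up}$ alone.
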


\begin{proof} 
Let $T \in \mathcal{T}^{(m)}_{m\ell + 2}$. For each subgon $Q$ cut out by $T$, let $v(Q)$ be the smallest indexed vertex incident to $Q$. Label the subgons incident to $v_0$ by $Q_k,\ldots,Q_1$ in clockwise order, i.e., so that this indexing is opposite from the order of the visits of the browse map. In particular, $v_{n-1}$ is a vertex on $Q_k$ and $v_1$ is a vertex on $Q_1$. 
Let $r_i$ be such that $v_{r_i+1}$ is the vertex with second smallest index incident to $Q_i$. In particular, $r_1 = 0$, and for $i > 1$, $(v_0,v_{r_i+1}) \in T$.

An $m$-Dyck path $D \in \mathcal{D}^{(m)}_\ell$ is determined by the number of up steps on each line $x = i$, i.e, the values $\text{up}_D(i)$ for all $0 \leq i \leq m\ell + 1$. From the definition of the browse map, we see that $\text{up}_{\brow(T)}(i)$ is  equal to the number of subgons $Q$ cut out by $T$ satisfying $v(Q) = v_i$. If we instead apply $\brow$ to the counterclockwise rotation of $T$, we have \[
\text{up}_{\brow(\rott(T))}(i) = \begin{cases}  \text{up}_{\brow(T)}(i+1) & i \neq r_j \text{ for all }j\\
\text{up}_{\brow(T)}(i+1) + 1 & i = r_j \text{ for some value } j.
\end{cases}
\]

Now, we want to show that $\text{up}_{\rottilde(\brow(T))}$ has the same relationship with $\text{up}_{\brow(T)}$. Since the $\rottilde$ map deletes one initial $R$, we see that $\text{up}_{\rottilde(\brow(T))}(i) = \text{up}_{\brow(T)}(i+1)$ if there are no points $p_j$ on the line $x = i$ in $\brow(T)$ and otherwise $\text{up}_{\rottilde(\brow(T))}(i) = \text{up}_{\brow(T)}(i+1) + 1$. 

In $\brow(T)$, the intersection points $p_j$ occur at the beginning of $R$ steps associated to visiting the subgon $Q_{j+1}$ at vertex $v_{r_j+1}$, i.e.,  visiting this subgon for the second time. This is true because the height sequence will only reduce by more than $m$ after our first encounter with subgon $Q_j$ once we have  an intermediate visit at $Q_{j+1}$. Therefore, the condition ``there are no points $p_j$ on the line $x = i$'' is equivalent to ``$i \neq r_j$ for all $j$'', and similarly for the negation. We conclude $\rottilde(\brow(T)) = \brow(\rott(T))$ since these $m$-Dyck paths have the same number of up steps along each vertical line. 
\end{proof}

\begin{example}
    Let $T$ be the 4-angulation displayed on the right in \Cref{fig:prop-2:etherington-to-dyck-path} and let $D$ be the 2-Dyck path displayed in the same figure on the left. 
   Below, we reproduce both the Dyck path (written as a word in $U$'s and $R$'s) and the values of the height function $H_D:[d]\to\mathbb{Z}$ as defined in \Cref{rmk: rotation of an n-gon and shifted Dyck path rounds2}.
    \[
    \begin{array}{cccccccccccccc}
        & \text{Dyck path:}
        & \overbrace{U~U}^{v_0}~
        & \overbrace{R~U}^{v_1}~
        & \overbrace{R}^{v_2}~
        & \overbrace{R~U~U}^{v_3}~
        & \overbrace{R}^{v_4}~
        & \overbrace{R}^{v_5}~
        & \overbrace{R}^{v_6}~
        & \overbrace{R}^{v_7}~
        & \overbrace{R}^{v_8}~
        & \overbrace{R}^{v_9}~
        & \overbrace{R}^{v_{10}} \\
        & H_D(j):
        & 2\hspace{0.25cm} 4
        & 3\hspace{0.25cm} 5
        & 4
        & 3\hspace{0.25cm} 5\hspace{0.25cm} 7
        & 6
        & 5
        & 4
        & 3
        & 2
        & 1
        & 0
    \end{array}
    \]

    In  \Cref{fig:prop-2:etherington-to-dyck-path-rotated}, $\rott(T)$ is drawn on the right and $\brow(\rott(T))$ is drawn on the left. We will demonstrate that $\brow(\rott(T)) = \rottilde(D)$. The value $k$ is 2 since the third step of $D$ is the first right step. The intersection point $p_1$ is drawn as a star in \Cref{fig:prop-2:etherington-to-dyck-path}. In the language of \Cref{rmk: rotation of an n-gon and shifted Dyck path rounds2}, $p_i'>2$ is the smallest index such that $H(p_i') < 2$. This occurs at the second-to-last position, i.e., $p_i' = 14$. Therefore, the 2-Dyck path $\rottilde(D)$ is the result of adding a $U$ after the 14th step, deleting the initial subword $U^2~R$ but then reinserting an initial $U$, and appending a final $R$. The additions are both underlined and colored in orange below.
    \[
    \begin{array}{ccccccccccccccccccccc}
         & ~ & U & U & R & ~ & U & R & R & U & U & R & R & R & R & R & \textcolor{teal}{\underline{R}} & ~ & R & ~ \\
         & \mapsto & \cancel{U} & \cancel{U} & \cancel{R} & \textcolor{orange}{\underline{U}} 
         & U & R & R & U & U & R & R & R & R & R & \textcolor{teal}{\underline{R}} & \textcolor{orange}{\underline{U}} & R & \textcolor{orange}{\underline{R}}
    \end{array}
    \] 
\end{example}

\begin{figure}[ht]
    \centering
\begin{tabular}{cc}
\begin{tikzpicture}[scale=0.5]
    \foreach \i in {0,...,10} {
        \draw [thin,black] (\i,0) -- (\i,5)  node [below] at (\i,0) {\scriptsize $\i$};
    }
    \foreach \i in {0,...,5} {
        \draw [thin,black] (0,\i) -- (10,\i) node [left] at (0,\i) {\scriptsize $\i$};
    }
    
    \draw[line width=0.6mm, color=black] (0,1) -- (0,2) 
                                               -- (2,2) 
                                               -- (2,4)
                                               -- (8,4)
                                               -- (8,5)
                                               -- (9,5);
                                              
    \draw[line width=0.6mm, color=orange] (0,0)  -- (0,1);
    \draw[line width=0.6mm, color=orange] (7,4)  -- (8,4) 
                                                 -- (8,5);
    \draw[line width=0.6mm, color=orange] (9,5) -- (10,5);
    
    \draw[dashed] (0,0) -- (10,5);                            
    \node at (7,4) {$\textcolor{teal}{\bigstar}$};
    
\end{tikzpicture}
&
\begin{tikzpicture}[scale = 0.5]
 \newdimen\R
    \R=2.7cm
    
 \draw (30:\R) \foreach \x in {60,90,...,390} {  -- (\x:\R) };  
    \foreach \x/\l/\p in
     {
      60/{\footnotesize  $v_{11}$}/above,
      90/{\footnotesize  $v_{10}$}/above,
      120/{\footnotesize  $v_9$}/above,
      150/{\footnotesize  $v_8$}/left,
      180/{\footnotesize  $v_7$}/left,
      210/{\footnotesize  $v_6$}/left,
      240/{\footnotesize  $v_5$}/below,
      270/{\footnotesize  $v_4$}/below,
      300/{\footnotesize  $v_3$}/below,
      330/{\footnotesize  $v_2$}/right,
      360/{\footnotesize  $v_1$}/right,
      390/{\footnotesize $v_0$}/right
     }
   \node[inner sep=1pt,circle,draw,fill,label={\p:\l}] at (\x:\R) { };

   \draw(60:\R) -- (150:\R);
   \draw(30:\R) -- (180:\R) -- (-30:\R)--(240:\R);
\end{tikzpicture}\\
\end{tabular}
    \caption{On the left, we have the altered 2-Dyck Path of order 5 given by the process in \Cref{thm: rotation of n-gon and shifted Dyck path} and on the right, we have the corresponding rotated 4-angulation of a 12-gon from \Cref{fig:prop-2:etherington-to-dyck-path}.}
    \label{fig:prop-2:etherington-to-dyck-path-rotated}
\end{figure}
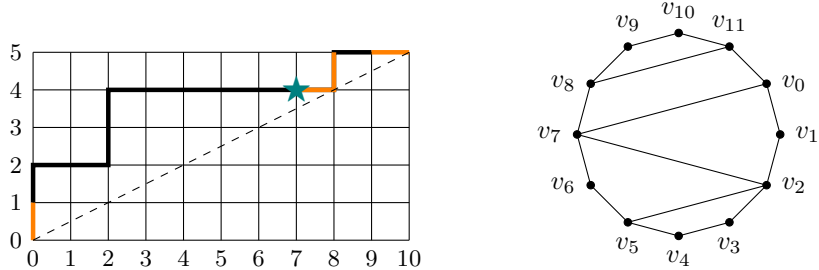

\bibliographystyle{amsalpha}
\bibliography{bibliography.bib}

\end{document}